\newtheorem{theorem}{Theorem}[section]
\newtheorem{rem}[theorem]{Remark}
\newtheorem{lemma}[theorem]{Lemma}
\newtheorem{proposition}[theorem]{Proposition}
\newtheorem{cor}[theorem]{Corollary}
\newtheorem{notation}[theorem]{Notation}
\newtheorem{claim}[theorem]{Claim}
\numberwithin{equation}{section}
\date{\today}
\renewcommand{\epsilon}{\varepsilon}
\newcommand{\wi}{\widetilde}
\newcommand{\ov}{\overline}
\newcommand{\pa}{\partial}
\newcommand{\ep}{\epsilon}
\newcommand{\tht}{\theta}
\newcommand{\Tht}{\Theta}
\newcommand{\pri}{\prime}
\newcommand{\sig}{\sigma}
\newcommand{\kap}{\kappa}
\newcommand{\blds}{\boldsymbol}
\newcommand{\abs}[1]{\left\vert#1\right\vert}
\newcommand{\BB}[1]{\ensuremath{\mathbb{#1}}}
\newcommand{\R}{\ensuremath{\BB{R}}}
\newcommand{\iny}{\ensuremath{\infty}}
\newcommand{\grad}{\ensuremath{\nabla}}
\DeclareMathOperator{\dv}{div} %
\DeclareMathOperator{\curl}{curl} %
\DeclareMathOperator{\weak}{weak} %
\newcommand{\prt}{\ensuremath{\partial}}
\newcommand{\brac}[1]{\ensuremath{\left[ #1 \right]}}
\newcommand{\pr}[1]{\ensuremath{\left( #1 \right) }}
\newcommand{\norm}[1]{\ensuremath{\left\Vert #1 \right\Vert}}
\newcommand{\eps}{\ensuremath{\epsilon}}
\newcommand{\Cal}[1]{\ensuremath{\mathcal{#1}}}
\newcommand{\pdx}[2]{\frac{\prt #1}{\prt #2}}
\newcommand{\ol}{\overline}
\newcommand{\Ignore}[1] {}
\theoremstyle{definition}
\newcommand{\spacer}{\vspace{2mm}}
\newcommand{\halfspacer}{\vspace{1mm}}
\newcommand{\Holder}
    {H\"{o}lder }
\newcommand{\e}{\blds{e}}
\newcommand{\f}{\blds{f}}
\newcommand{\n}{\blds{n}}
\newcommand{\oo}{\blds{\omega}}
\newcommand{\uu}{\blds{u}}
\newcommand{\vv}{\blds{v}}
\newcommand{\BoldTau}{\boldsymbol{\tau}}
\DeclarePairedDelimiter{\set}{\{}{\}}
\DeclarePairedDelimiter{\bigppr}{\Big(}{\Big)}
\DeclareMathOperator{\erfc}{erfc} %
\newcommand{\Comment}[1]{{\color{blue}#1}}
\newcommand{\ToDo}[1]{\textbf{\Comment{[#1]}}}
\crefname{cor}{Corollary}{Corollaries} 
\crefname{lemma}{Lemma}{Lemmas}	       
\crefname{section}{Section}{Sections}
\Crefname{section}{Section}{Sections}
\crefname{theorem}{Theorem}{Theorems}
\Crefname{theorem}{Theorem}{Theorems}
\crefname{prop}{Proposition}{Propositions}
\Crefname{prop}{Proposition}{Propositions}
\renewcommand{\autoref}[1]{\textbf{USE cref RATHER THAN autoref!}}
\newcommand{\red}{\textcolor{red}}
\definecolor{Green}{rgb}{0.010,0.7,0.02}
\newcommand{\RR}{\mathbb{R}}
\newcommand{\ZZ}{\mathbb{Z}}
\subjclass[2000]{35B25, 35C20, 76D05, 76D10}
\keywords{Boundary layers, singular perturbations, Navier-Stokes equations, Euler equations, inviscid limit}
\begin{document}

\title[Vanishing viscosity limit]
{The Vanishing viscosity limit for some symmetric flows}
\author[G.-M. Gie et al.]
{Gung-Min Gie$^1$, James P. Kelliher$^2$, Milton C. Lopes Filho$^3$, Anna L. Mazzucato$^4$, and Helena J. Nussenzveig Lopes$^3$}
\address{$^1$ Department of Mathematics, University of Louisville, 328 Natural Sciences Building, Louisville, KY 40292, U.S.A.}
\address{$^2$ Department of Mathematics, University of California, Riverside, 900 University Ave., Riverside, CA 92521, U.S.A.}
\address{$^3$ Instituto de Matematica, Universidade Federal do Rio de Janeiro, Caixa Postal 68530, 21941-909, Rio de Janeiro, RJ, Brazil}
\address{$^4$ Department of Mathematics, Penn State University, University Park, PA 16802, U.S.A.}

\email{gungmin.gie@louisville.edu}
\email{kelliher@math.ucr.edu}
\email{mlopes@im.ufrj.br}
\email{alm24@psu.edu}
\email{hlopes@im.ufrj.br}

\begin{abstract}
The focus of this paper is
on the analysis of the boundary layer and the associated
vanishing viscosity limit for two classes of flows with symmetry,
namely, Plane-Parallel Channel Flows and Parallel Pipe Flows.
We construct   explicit boundary layer correctors,
which approximate the difference between the Navier-Stokes and the Euler solutions. Using properties of these correctors, we establish convergence of the Navier-Stokes solution to the Euler solution as viscosity vanishes with optimal rates of convergence.  In addition,
we investigate vorticity production on the boundary in the limit of vanishing viscosity.
Our work significantly extends prior work in the literature.
\end{abstract}

\maketitle


\section{Introduction}\label{S:intro}

This article concerns the behavior of incompressible, viscous
fluids at small viscosity in the presence of boundaries under the
classical ``no-slip'' boundary conditions.
We let $\Omega$ be a bounded domain in two or three space dimensions  with
 boundary $\Gamma$ of class $C^\infty$. Viscous, incompressible (Newtonian)
fluid flow is
modeled by solutions of the Navier-Stokes equations (NSE for short). We
consider the following initial-value problem:
\begin{equation}\label{e:NSE}
        \left\{\begin{array}{rll}
                             \dfrac{\pa \uu^\eps}{\pa t} + (\uu^\eps \cdot \nabla)\uu^\eps
                             & \hspace{-2mm}
                             = - \nabla p^\ep + \ep \Delta \uu^\eps + \f, \, & \text{ in } \Omega\times(0,T),\\
                \halfspacer
                              \dv \uu^\eps
                              & \hspace{-2mm}
                              = 0, \, & \text{ in } \Omega\times(0,T),\\
                              \uu^\eps
                              & \hspace{-2mm}
                              = 0, \, & \text{ on } \Gamma \times (0, T),\\
                              \uu^\eps\big|_{t=0}
                              & \hspace{-2mm}
                              = \uu_0, \, & \text{ in } \Omega.\\
        \end{array}\right.
\end{equation}

Where $\uu^\eps$ is the Eulerian fluid velocity, $p^\ep$ is the pressure, $\f$
are given external forces, and $\uu_0$ is the given initial velocity.
Here $\ep$ is a small, strictly positive  parameter, representing
the kinematic viscosity of the fluid, assumed homogeneous, $T>0$ is a fixed,
positive time, $\f$ and $\uu_0$ are smooth, divergence-free vector fields.
The boundary condition in \cref{e:NSE} is referred to as the \textit{no-slip} condition or
\textit{no-slip, no-penetration} condition.

By formally setting $\ep =0$ in NSE we obtain the Euler equations (EE for
short), which model the flow of inviscid, incompressible fluids. The
initial-value problem for EE is given by:
\begin{equation}\label{e:EE}
        \left\{ \begin{array}{rll}

                   \dfrac{\pa \uu^0}{\pa t} + (\uu^0 \cdot \nabla)\uu^0
                             & \hspace{-2mm}
                             = - \nabla p^0 + \f, \, & \text{ in } \Omega\times(0,T),\\
                \halfspacer
                              \dv \uu^0
                              & \hspace{-2mm}
                              = 0, \, & \text{ in } \Omega\times(0,T),\\
                \halfspacer
                              \uu^0 \cdot \n
                              & \hspace{-2mm}
                              = 0, \, & \text{ on } \Gamma \times (0, T),\\
                              \uu^0\big|_{t=0}
                              & \hspace{-2mm}
                              = \uu_0, \, & \text{ in } \Omega.\\
        \end{array}\right.
\end{equation}
where $\n$ is the unit outer normal vector on $\pa\Omega$. The boundary
condition in \cref{e:EE} is referred to simply as \textit{no-penetration}, and
reflects the assumption that the fluid is in a container with rigid walls.
For the types of flows considered in this paper, it is convenient to take the
initial velocity for NSE to be independent of $\ep$ and equal to the initial
velocity for EE, although this assumption can be weakened.
The assumption that the data and the boundary of the domain are smooth can also
be weakened, but we will not seek optimal regularity conditions,
as our focus is on a detailed analysis of the fluid boundary behavior at small
viscosity. By passing to
a moving frame, it is possible to consider the case in which the boundary is
allowed to move rigidly along itself, as in the classical case of
the Taylor-Couette flow. Then, the no-slip boundary condition reads $\,
\uu^\ep\equiv \bm{U}$ on $\Gamma\times (0,T)$, where $\bm{U}$ is a given vector
field tangent to the boundary.

A main question in fluid mechanics is whether viscous fluids at low viscosity
are well approximated by inviscid fluids. Near the boundary, this approximation
cannot hold uniformly in $\ep$ as there must be  a discrepancy in the tangential
components  of $\uu^\ep$ and $\uu^0$ at the boundary, unless $\uu^0$ happens to
vanish on the boundary identically over time. This discrepancy leads to the
potential creation of large gradients of velocity in a layer near the boundary,
called {\em a viscous boundary layer}, where the fluid is hence neither well
modeled by solutions of NSE nor by solutions of EE. (We refer to
\cite{Schlichting} and references therein for an introduction to the theory of
boundary layers.) Understanding the behavior of a fluid in the viscous layer is
one of the most challenging problems in fluid mechanics, and mathematically it
is far from understood, even though progress has been made recently.
A related mathematical problem is whether solutions of NSE converge
in a suitable norm to solution of EE as $\ep$ goes to zero.
We will say that
the (classical) \textit{vanishing viscosity limit} or \textit{inviscid limit}
holds if solutions of \cref{e:NSE} converge to solutions of \cref{e:EE} in the
energy norm, that is, strongly in $L^\infty((0,T);L^2(\Omega))$.
Whether the classical vanishing viscosity limit holds generically, at least for
short time,
is an open question even for $C^\iny$ initial data and in simple
geometries, such as a disk in the plane. Except in special situations, one does
not expect the
vanishing viscosity limit to hold over long intervals of time (assuming the
Euler solution exists over such intervals) because of the observed phenomenon
of {\em boundary layer separation}. However, the precise relation between layer
separation and the vanishing viscosity limit or lack thereof has not been
established yet.

There is an extensive literature on the vanishing viscosity limit when the
boundary layer is absent or very weak.
For solutions in the whole space or in a
periodic domain, the vanishing viscosity limit has been rigorously proved
in various norms
(\cite{Swann1971, Kato1972, Kato1975, ConstantinFoiasBook, Masmoudi2007}).
The limit also holds if some slip is allowed at the boundary
for viscous flows or if the production of vorticity at the boundary is
prescribed, such as under so-called Navier-friction boundary
conditions \cite{daViegaCrispo2010, daViegaCrispo2011A,XiaoXin2007,
BelloutNeustupa1, BelloutNeustupa2, BelloutNeustupa3, BelloutNeustupa4,
JPT11,GJ13}. In this context, the vanishing viscosity limit
has also been used initially as a mean to establish existence of 2D Euler
solutions (see  (\cite{Y1963},
\cite[pp.~87--98]{JL1969}, \cite{Bardos1972}, and
\cite[pp.~129--131]{L1996}). The boundary layer is studied for Navier conditions
in 2D in \cite{CMR, FLP, KNavier} and in 3D in \cite{IP06, IS10,
MasmoudiRousset2010, GK2012}. Lastly, the limit can be shown to hold for
non-characteristic boundary conditions \cite{TWsuction,TWnonchar, HT07, GHT12}, such as with
injection and suction at the boundary.

For the classical no-slip boundary conditions considered here, a formal
asymptotic analysis as $\ep\to 0$ leads to the Prandtl equations for the
velocity in the boundary layer, which exhibit both ill-posedness and
instabilities \cite{EE97,Grenier00,GSS09,GVD10,GN11,G-VN12}, unless the
boundary and the data have some degree of
analyticity \cite{Asano88,CS98II,LCS03,Maekawa2012,CLM13,KV13} or the data is
monotonic in the normal direction to the boundary
\cite{Oleinik66,OleinikSamokhin99,KMVW14}.
Another situation in which the Prandtl equations are well behaved and the
boundary layer can be analyzed is when the
initial data and the geometry of the domain have special symmetries. In this
paper, we discussed several examples of this last situation.

Specifically, we investigate
{\em plane-parallel channel} and  {\em parallel pipe flows} in three space
dimensions.
These are well-known examples of exact solutions of the fluid equations that
can be viewed as generalizations of plane Couette and Poiseuille flows, and have
been investigated before in the context of  boundary layers and the vanishing
viscosity limit. A special case of parallel pipe flows is that of planar flows,
which reduce to two-dimensional, {\em circularly-symmetric flows}.
These flows are naturally of interest for the study of boundary layers, as the
inviscid limit holds because  Kato's criterion \cite{Kato1983}, and
specifically, the generalization due to Temam and Wang \cite{TW1998,W2001}, applies.
In fact, they represent interesting, physically motivated, test cases, since the
Prandtl approximation can be rigorously established. In addition, an analysis of the
vorticity production by the boundary, in the vanishing viscosity limit, can be carried out.

In this article, we extend significantly prior work on these classes of flows,
some of which was done by the same authors of the present manuscript, giving  a
unified treatment of different classes of flows, focusing in particular on
vorticity production at the boundary and ill-prepared, or non-compatible, data.
By {\em ill-prepared} initial velocity we mean that the tangential component of
$\uu^0$ does not vanish at the boundary, so that the no-slip boundary condition
in \cref{e:NSE} is not satisfied at time $t=0$, and the forcing need not be
compatible with the initial data at $t=0$.  The smooth initial
data is assumed to be only in the space
\begin{align*}
    H
        = \set{\vv \in L^2(\Omega) |
            \, \dv \vv = 0, \, \vv \cdot \n = 0 \text{ on } \Gamma},
\end{align*}
but not in the space
\begin{align*}
    V = \set{\vv \in H^1_0 (\Omega) | \, \dv \vv = 0}.
\end{align*}
The case of ill-prepared data is mathematically more difficult to treat and
physically more interesting when the inviscid solution is steady, which is the
case for  circularly-symmetric data, as recalled below. In this case, there is
constant production of vorticity at the boundary in the limit \cite{LMNT08}.
Production of vorticity at the boundary was already discussed for plane Couette
and Poiseuille flows in \cite{Morton84}, using physical arguments.

In \cref{S:examples}, we introduce the special symmetric flows we will be concerned with, and we make some general
remarks about the vanishing viscosity limit.
The simplest case of symmetric flows is that of circularly symmetric flows,
which are  2D solutions of the fluid equations  for which the
streamlines are circles centered at
the origin. Such solutions can be obtained from any radial stream function or,
equivalently, any radial vorticity function, via the Biot-Savart law. They are a
special case of parallel pipe flows, discussed in \cref{S:IPF}.  It is immediate
to verify that any circularly symmetric, sufficiently regular Euler flow is
steady, that is, $\uu^0(t)=\uu^0(0)= \uu_0$, and that the solution to NSE with
the same data actually solves a two-dimensional heat equation with no pressure.
Since the dynamics is completely linear, this example is more pedagogical in
nature. It arises also in the context of stability of boundary layers around
steady profiles, a challenging and fundamental problem, which we do not tackle
in this paper (but see recent results in
\cite{BM14,GGNpreprint,BJMpreprint1,BJMpreprint2}).
A first proof of the vanishing viscosity limit in this class can be found in
\cite{Matsui1994} (see also \cite{BW02}). A more general convergence result,
allowing for a rough boundary velocity $\bm{U}$, which precludes  the use
of Kato's criterion, appears in \cite{LMNT08}. A simple argument to show that
the vanishing viscosity limit holds is given in \cite[Theorem 6.1]{K2006Disk},
though without a rate of convergence.

In \cref{S:Lighthill} we discuss the Lighthill principle for viscous flows between two
parallel planes and we use it to deduce an $L^1$ estimate for the vorticity of plane
parallel channel flows, uniform with respect to viscosity. We focus on the argument
which leads to the Lighthill principle and on the role of the Lighthill principle
in quantifying vorticity production at the boundary. This section can be read
independently from the remainder of our work.

In \cref{S:PPF}, we discuss plane-parallel flows in a periodized
channel. These are flows for which the streamlines lies on
parallel planes, and the velocity is independent of one of the horizontal
variables, but depends on the vertical variable, making the flow three
dimensional.
For plane-parallel flows, the Euler solution $\uu^0$ will not be steady,
even for zero forcing, and both EE and NSE retain their non-linear nature,
albeit only as a weakly non-linear system with zero pressure, making this a
substantially more difficult problem to study. A proof of the validity of the
vanishing viscosity
limit for ill-prepared data and the analysis of the boundary layer corrector
were carried out in \cite{MT08}, using a parametrix construction for a
diffusion-drift equation and layer potential techniques. Convergence of the
corrected velocity was obtained only in $L^\infty$. A Prandtl-type expansion
was used in \cite{MNW10} to obtain convergence in $H^1$ uniformly in time, but only for well-prepared data. In this article, we extend these results to obtain strong convergence of the corrected velocity in
$L^\infty((0,T);H^1(\Omega))$ for ill-prepared data and study vorticity production at the boundary in the limit.

Parallel pipe flows, the subject of \cref{S:IPF}, combine the features
of both circularly symmetric flows and plane-parallel flows. The domain is
a straight, infinite, circular pipe that is periodized along the direction of
the axis. As with the channel geometry, symmetry and periodicity ensure
uniqueness of solutions to NSE and EE, excluding in particular non-trivial
pressure-driven flows.
The velocity is independent of the variable along the pipe axis and, in  any
circular cross section of the
pipe, it is the sum of a circularly symmetric, planar velocity field and a
velocity field pointing in the direction of the axis. Again, NSE and EE reduce
to a weakly non-linear system.
A substantial complication over plane-parallel flows is that the
non-vanishing curvature now becomes an important factor in the analysis. Also, much as in the case of axisymmetric flows in the whole space, the behavior of the solution near the axis cannot be controlled as well as it can be away from
the axis in cylindrical coordinates. To deal with this difficulty, one can adapt
techniques from \cite{MT11} and \cite{HMNW11}, which entails the use of a
two-step localization, close to the boundary and near the pipe axis, or employ
suitable weighted inequalities. Since our focus in this work is on
the behavior of the
flow near the boundary, we restricts ourselves to considering pipes with
annular cross-section.

We close this Introduction with some notational conventions.
\begin{notation}\label{n:kappa}
\textnormal{ We introduce generic constants,
\begin{equation*}\label{e:kappa}
        \kap := \kap(\uu_0, \, \f, \, \Omega),
            \qquad
        \kap_T := \kap_T(\uu_0, \, \f, \, \Omega, \, T),
\end{equation*}
depending on the indicated data, but independent of $\ep$ or $t$.
}
\end{notation}

\begin{notation}\label{n:est}
\textnormal{
        By the appellative $e.s.t.$ associated to a function t we mean
that the function or constant has exponentially small norm in  all Sobolev
spaces $H^s$ (and thus in all H\"older's spaces $C^s$)
with a bound on the norm of the form $c_{1,s} \, e^{-c_{2,s}/\ep^{\gamma_S}}$,
$c_{1,s}, \,
c_{2,s}, \, \gamma_s > 0$, for each $s$.
We will say that a constant is $e.s.t$
if it satisfies a similar bound.
}
\end{notation}

%
%
\section{Symmetric flows: an overview} \label{S:examples}

The focus of this work is the analysis of the boundary layer and vanishing
viscosity limit for two classes of flows with symmetry.

Below and throughout the paper, we employ the following standard notation: if
$(\zeta,\eta,\xi)$ represents an orthogonal system of coordinates in $\RR^3$,
then $\{ \e_\zeta, \e_\eta, \e_\xi\}$ represents the associated orthonormal
frame, and similarly for coordinates in the plane. We will denote by
$(x,y,z)$ the Cartesian coordinates in $\RR^3$, by $(r,\phi)$ the
polar coordinates in $\RR^2$, and by $(x,r,\phi)$ the cylindrical coordinates in
$\RR^3$.

In this work we will be concerned with the following symmetric flows.

\begin{itemize}
    \item[\textbf{ (CSF)}] {\bf Circularly symmetric flows:} these are planar
flows in a
    disk centered at the origin \, $\Omega=\{ x^2+y^2 < R^2\}$. The velocity is
of the form:
$$
   \uu = V(r,t) \e_\phi.
$$
  The vorticity, which can be identified with a
scalar for planar flows, is also radially symmetric.

    \medskip

    \item[\textbf{  (PCF)}] {\bf Plane-parallel channel flows:} these are 3D
flows in an
    infinite channel, with periodicity imposed  in the $x$ and
    $y$-directions. The velocity takes the form:
        \begin{align*}
            \uu = (u_1 (z, t), \, u_2 (x, z, t), \, 0),
        \end{align*}
      and is defined on the domain
        \begin{align*}
            \Omega := (0, L)^2 \times (0, h).
        \end{align*}
  Here  $h$ is the width of the channel and $u_1$, $u_2$ satisfy periodic
       boundary conditions in  $x$ and $y$ with period $L$. The boundary is identified
with the set $\Gamma := \prt \Omega = [0,L]^2 \times \set{0, h}$

    \medskip

    \item[\textbf{ (PPF)}] {\bf Parallel pipe flows:}  these are 3D flows in an
      infinite straight, circular pipe, with periodicity imposed along the pipe
axis.
     The velocity is of the form
        \begin{align*}
            \uu=
                u_{\phi} (r, t) \e_{\phi}
                    + u_x (\phi, r, t) \e_x,
        \end{align*}
      in cylindrical coordinates on the domain
        \begin{align*}
            \Omega := \{(x, y, z) \in \R^3 \, \mid \, y^2 + z^2 < R^2, \; 0<x<L\}.
        \end{align*}
      Here $R$ is the radius of the circular cross-section of the pipe and $u_\phi$, $u_x$
    satisfy periodic boundary conditions in $x$.
   The boundary is identified with the set $\Gamma = [0, L] \times \{(y,z)\in
\RR^2 \,\mid \, y^2+z^2= R^2\}$.
\end{itemize}

CSF is a special case of PPF when the component of the velocity along the axis
is zero, that is, the flow can be identified with a
two-dimensional flow. In fact, the cross-sectional components of any PPF
can be identified with a CSF in the cross-section of the pipe.
In all three cases, the symmetry of the initial data is preserved in time for
both $\uu^\eps$ and $\uu^0$ as long as the forcing has the same spatial symmetry
as the initial velocity. Uniqueness holds not only in the class of strong
solutions, but also in the class of weak solutions (see \cite{BLNNT13} and
references therein).

For any initial velocity $\uu_0 \in H$, due to the energy inequality for
solutions of  NSE, weak sequential compactness  implies the existence of $\vv
\in L^\iny(0, T; H)$ and some subsequence of $(\uu^\eps)_{\ep>0}$ converging
weakly to $\vv$ in $L^\iny(0, T; H)$  (see \cite{Kel14Observations}).
Additional information is required to conclude that $\vv$ is a weak solution of
EE.

Let $\omega^{\eps} = \curl \uu^{\eps}$ be the vorticity. We will find that, in each of these examples,
\begin{align}\label{e:graduBounded}
    (\omega^\eps) \text{ is bounded in }
    L^\iny(0, T; L^1(\Omega))
    \text{ uniformly in } \eps.
\end{align}
 Except in the very special case when $\uu^0$ vanishes
on the boundary for all $t \in (0, T)$, it is not possible to have
$(\omega^\eps)$ uniformly bounded in $L^\iny(0, T; L^p(\Omega))$ for any $p > 1$
(see \cite{Kel14Observations}).
Hence,
\cref{e:graduBounded} is the strongest possible condition (in the class of
Lebesgue spaces) one could expect on $(\omega^\eps)$.

However, not even (\ref{e:graduBounded}) is enough to ensure that the classical vanishing viscosity limit,
\begin{align}\label{e:VVLimit}
    \uu^\eps \to \uu^0
        \text{ in } L^\iny(0, T; H),
\end{align}
holds true. In fact, a slightly stronger condition would be sufficient, namely that $\{\omega^{\eps}\}$ be bounded in $L^{\infty}(0,T;X)$, for some Banach space $X$ which is compactly imbedded in $H^{-1}$. This follows from an easy adaptation of Theorem 1.1 in \cite{LLT00}. Within the Lebesgue hierarchy, $L^1$ is critical for this imbedding. In fact, $L^p$ is compactly imbedded in $H^{-1}$, for any $p>1$.


It should be noted that, even for CSF, \cref{e:graduBounded} is not
straightforward to establish (in fact, the lack of an $L^p$ vorticity bound, for $p>1$, uniform in viscosity,
is a diffusive effect, present even when inertial terms
vanish). Here, it is a byproduct of establishing convergence in stronger norms
than the energy norm for
the corrected velocity. (For recent, related criteria on the validity of the
vanishing viscosity limit, see \cite{CEIVPreprint}.)


Another common property among all types of flows under study is the fact that
$\Delta \uu^\eps \cdot \n = 0$ on the boundary. Though we will not use this
property directly, we will exploit some related implications, in particular that
the Laplace and Stokes operators agree when applied to $\uu^\eps$ and that
$\grad p_\eps \cdot \n = 0$, providing a boundary condition for the pressure.
(The pressure will vanish entirely for CSF and PCF.)
By comparison, in \cite{BelloutNeustupa4}  (see also  \cite{BelloutNeustupa3})
the authors study NSE under boundary conditions of the form $\curl^k \uu^\eps
\cdot \n = 0$, $k = 0, 1, 2$, which can also be written as
\begin{align*}
    \uu^\eps \cdot \n
        = \curl \uu^\eps \cdot \n
        = \Delta \uu^\eps \cdot \n = 0.
\end{align*}
For the 3D examples of  PCF and PPF, the first and third of these boundary
conditions are satisfied. The second boundary condition is not satisfied, though
$\curl \uu^\eps \cdot \n$ is of an especially simple form, containing only a
tangential derivative of one of the components of the velocity.

\section{Lighthill principle for PCF} \label{S:Lighthill}

In this section we introduce the Lighthill principle, which we prove for the case of flow between two parallel planes, and we use it to derive an $L^1$ estimate on vorticity for PCF, independent of viscosity. The Lighthill principle is a property of viscous incompressible flow in a domain with a rigid boundary. Roughly speaking it is a way of expressing the flux, through the rigid boundary, of the vorticity components tangent to the boundary, in terms of tangential derivatives of pressure at the boundary. We will see that, for flow between two parallel planes, the vorticity vector is actually tangent to the boundary, so that Lighthill principle provides a complete set of boundary conditions for the viscous vorticity equation, provided that the pressure is known at the boundary. For a discussion of the Lighthill principle see \cite{Morton84}, and for the original source, see \cite{Lighthill63}.

The results covered in this section are not used in the remainder of the text. Our purpose in including this material is twofold. First, to present the argument which leads to the Lighthill principle. This is what will actually be used in the remainder of the article. Our second objective is to illustrate the use of Lighthill's idea in estimating vorticity production by the viscous friction between the fluid and the boundary in a rigorous form.

We are interested in solutions of the 3D Navier-Stokes equations \eqref{e:NSE} between two parallel planes, say $\{z=0\}$ and $\{z=h\}$. We will also assume the flow is periodic in the other two directions. Let $L>0$ and set $Q_L = [0,L] \times [0,L]$ to be the periodic box of sides $L$; in this section $\Omega = Q_L \times (0,h)$. In this section we will assume that $\f \equiv 0$.

We fix $\uu_0$ a smooth, divergence-free vector field in $\Omega$, tangent to $\Gamma=\partial \Omega \equiv Q_L \times \{0,h\}$, horizontally periodic, and consider $\uu^{\eps}$ the (smooth) solution of problem \eqref{e:NSE} with $\f = 0$.

\begin{proposition} \label{LHandtangPCF}
Let $\oo^{\eps} = \curl \uu^{\eps} = (\omega_1^{\eps},\omega_2^{\eps},\omega_3^{\eps})$. We have that, at any point on $\Gamma=\partial \Omega$, \[\omega_3^{\eps} = 0.\] We also have, at the boundary, that:
\begin{equation} \label{LF}
\begin{array}{l}
\displaystyle{\frac{\partial\omega_1^{\eps}}{\partial z} = - \frac{1}{\eps}\frac{\partial p^{\eps}}{\partial y}} \\
\\
\displaystyle{\frac{\partial\omega_2^{\eps}}{\partial z} = \frac{1}{\eps}\frac{\partial p^{\eps}}{\partial x}}. \\
\end{array}
\end{equation}
\end{proposition}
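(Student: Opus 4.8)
The plan is to exploit the special structure of plane-parallel channel flow, namely that $\uu^\eps = (u_1(z,t),\, u_2(x,z,t),\, 0)$, together with the incompressibility constraint and the no-slip boundary condition, to compute the vorticity and its normal derivative explicitly on $\Gamma$. First I would write out the components of $\oo^\eps = \curl\uu^\eps$ from the formula $\oo^\eps = (\pa_y u_3 - \pa_z u_2,\ \pa_z u_1 - \pa_x u_3,\ \pa_x u_2 - \pa_y u_1)$. Since $u_3 \equiv 0$ and $u_1$ depends only on $(z,t)$, this collapses to $\oo^\eps = (-\pa_z u_2,\ \pa_z u_1,\ \pa_x u_2)$. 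On $\Gamma = Q_L\times\{0,h\}$ the no-slip condition gives $u_2(x,0,t) = u_2(x,h,t) = 0$ for all $x$, so differentiating this identity tangentially in $x$ yields $\pa_x u_2 = 0$ on $\Gamma$; hence $\omega_3^\eps = 0$ on the boundary, which is the first claim. (As noted in \cref{S:examples}, this is the statement that $\curl\uu^\eps\cdot\n$ contains only a tangential derivative of a velocity component, which here vanishes by no-slip.)

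For the Lighthill relations \cref{LF}, I would evaluate the tangential components of the momentum equation on $\Gamma$. The key observation is that on the boundary $\uu^\eps$ vanishes, so the material-derivative term $\pa_t\uu^\eps + (\uu^\eps\cdot\na)\uu^\eps$ reduces to $\pa_t\uu^\eps$, and moreover since $\uu^\eps \equiv 0$ on $\Gamma$ for all $t$ we have $\pa_t\uu^\eps = 0$ there as well. With $\f \equiv 0$, the momentum equation on $\Gamma$ therefore becomes $\ep\,\Delta\uu^\eps = \na p^\eps$ on $\Gamma$, i.e. the three scalar identities $\ep\,\Delta u_i^\eps = \pa_i p^\eps$. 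I would then rewrite $\Delta u_i$ in terms of the vorticity: using $\dv\uu^\eps = 0$ one has $\Delta\uu^\eps = -\curl\oo^\eps$, so $\ep\,(\curl\oo^\eps)_i = -\pa_i p^\eps$. Writing out the first two components of $\curl\oo^\eps$ using $\oo^\eps = (-\pa_z u_2,\ \pa_z u_1,\ \pa_x u_2)$ and simplifying with the structural dependencies — in particular $(\curl\oo^\eps)_1 = \pa_y\omega_3 - \pa_z\omega_2 = \pa_y(\pa_x u_2) - \pa_z(\pa_z u_1)$ and $(\curl\oo^\eps)_2 = \pa_z\omega_1 - \pa_x\omega_3$ — and evaluating on $\Gamma$ (where $\omega_3 = 0$ and its tangential derivatives in $x,y$ along $\Gamma$ vanish) should produce exactly $\pa_z\omega_1^\eps = -\frac1\ep\pa_y p^\eps$ and $\pa_z\omega_2^\eps = \frac1\ep\pa_x p^\eps$. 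The normal derivative $\pa_z\omega_2^\eps = \pa_z^2 u_1 - \pa_x\pa_z u_3$ survives because differentiation in $z$ is the normal (non-tangential) direction; this is the only second-order term that need not vanish on the boundary.

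The main point requiring care — and the step I expect to be the principal obstacle — is the justification that $\Delta\uu^\eps = -\curl\curl\uu^\eps$ pointwise on the closed boundary, and the careful bookkeeping of which derivatives of $u_2$ and $\omega_3$ vanish on $\Gamma$. The identity $\Delta = \na\dv - \curl\curl$ holds in the interior from $\dv\uu^\eps = 0$, and extends to $\Gamma$ by continuity of the smooth solution up to the boundary; but when I substitute and evaluate on $\Gamma$ I must be careful to distinguish tangential derivatives (in $x$ and $y$, which annihilate any quantity that vanishes identically on the flat boundary, such as $u_2$ and hence $\omega_3$) from the normal derivative in $z$ (which does not). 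Concretely I must check that terms like $\pa_y\omega_3^\eps$ and $\pa_x\omega_3^\eps$ vanish on $\Gamma$ — they do, since $\omega_3^\eps \equiv 0$ along the whole plane $\{z=0\}$ (resp. $\{z=h\}$) and $x,y$ are tangential — while retaining $\pa_z\omega_1^\eps$ and $\pa_z\omega_2^\eps$. Once this accounting is done correctly, \cref{LF} follows immediately from the two tangential components of $\ep\,(\curl\oo^\eps) = -\na p^\eps$.
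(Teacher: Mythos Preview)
Your proof is correct and follows essentially the same route as the paper: show $\omega_3^\eps$ vanishes on $\Gamma$ because it is built from tangential derivatives of velocity components that vanish there, then restrict the momentum equation to $\Gamma$ (where $\pa_t\uu^\eps$ and the convective term drop out), rewrite $\Delta\uu^\eps = -\curl\oo^\eps$, and read off the two tangential components. The paper extracts those components by taking the cross product with $\n_\pm$ rather than reading them off coordinatewise, but that is only a cosmetic difference.

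One point worth flagging: you build the argument on the plane-parallel ansatz $\uu^\eps = (u_1(z,t),\,u_2(x,z,t),\,0)$, but the proposition as stated (and as the paper proves it) applies to an arbitrary smooth solution of \cref{e:NSE} between the two planes, with no symmetry assumed; the PCF restriction enters only later in the section. Your argument needs no modification to cover the general case: $\omega_3^\eps = \pa_x u_2^\eps - \pa_y u_1^\eps$ still vanishes on $\Gamma$ since both are tangential derivatives of functions vanishing on $\Gamma$, and the rest goes through unchanged.
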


\begin{proof}
Set $\n_{\pm} = (0,0,\pm 1)$, so that $\n_+$ is the unit exterior normal to $Q_L \times \{z=h\}$  and $\n_-$ is the unit exterior normal to $Q_L \times \{z=0\}$. We claim that
\begin{equation} \label{vorttangtobdry}
\pm \omega_3^\eps \equiv \oo^{\eps}\cdot\n_{\pm} = 0 \mbox{ on } \partial \Omega \times (0,T).
\end{equation}

Indeed, it is immediate that $\pm \omega_3^\eps \equiv \oo^{\eps}\cdot\n_{\pm} $.
We write
\[\oo^{\eps}=(\partial_y u_3^\eps - \partial_z u_2^\eps, \, \partial_z u_1^\eps-\partial_x u_3^\eps, \, \partial_x u_2^\eps-\partial_y u_1^\eps).\]

Hence, at $\Gamma$, $\omega_3^\eps = \partial_x u_2^\eps-\partial_y u_1^\eps = 0$, because $\uu^\eps = 0$ at $\Gamma$ and both $\partial_x$ and $\partial_y$ are tangential  derivatives along the boundary. This establishes \eqref{vorttangtobdry}.

Next, we observe that, from the vector calculus identity below:
\[ \curl \curl \uu = \nabla \dv \uu - \Delta \uu\]
together with the fact that $\uu^{\eps}$ is divergence-free, it follows that
\begin{equation} \label{curlvort}
\Delta \uu^{\eps} = - \curl \oo^{\eps}.
\end{equation}

Assume that the Navier-Stokes equations \eqref{e:NSE} remain valid up to the boundary. Then, since $\uu^{\eps} = 0$ on $\partial \Omega \times (0,T)$, we find, using \eqref{curlvort},
\begin{equation} \label{gradpcurlvort}
\curl \oo^{\eps} = -\displaystyle{\frac{1}{\eps}}\nabla p^{\eps} \mbox{ on } \partial \Omega \times (0,T).
\end{equation}

We will take the cross-product of \eqref{gradpcurlvort} with $\n_{\pm}$.

We first compute $\curl\oo^\eps \times \n_{\pm}$ and we find:
\[\curl \oo^\eps \times \n_{\pm} = \pm(-\partial_x\omega_3^\eps + \partial_z\omega_1^\eps  , -\partial_y\omega_3^\eps + \partial_z\omega_2^\eps , 0).\]
However, on $\Gamma$ we now know that $\omega_3^\eps = 0$. Hence, since $\partial_x$ and $\partial_y$ are tangential derivatives, we find, on $\Gamma$, that
\begin{equation} \label{Doon}
\curl \oo^\eps \times \n_{\pm} = \pm(\partial_z\omega_1^\eps,\,\partial_z\omega_2^\eps,\,0).
\end{equation}

Next we compute $\nabla p^\eps \times \n_{\pm}$. We obtain:
\begin{equation} \label{nablapn}
\nabla p^\eps \times \n_\pm = \pm(\partial_y p^\eps, - \partial_x p^\eps, 0).
\end{equation}

We easily deduce, from \eqref{gradpcurlvort}, \eqref{Doon} and \eqref{nablapn}, the desired system of equations in the statement, \eqref{LF}.

\end{proof}

Lighthill principle, as expressed above, provides a complete set of boundary conditions for the vorticity form of the Navier-Stokes equations. The two tangential components of vorticity satisfy a non-homogeneous Neumann condition  and the normal component satisfies a homogeneous Dirichlet condition.

Next we will focus on the special case of plane-parallel channel flows in $\Omega$ (PCF). As discussed in the previous section, PCF have the form
\begin{equation} \label{PCF}
\uu^\eps = \uu^\eps (x,y,z,t) \equiv (u_1^\eps (z,t), \, u_2^\eps(x,z,t),\, 0).
\end{equation}
This symmetry is preserved by both the Euler and Navier-Stokes evolution. Note that the divergence-free condition for velocity is automatically satisfied.

We will use the following notation for the initial velocity:
\[\uu_0=\uu_0(x,y,z)=(g_1(z),\,g_2(x,z),\,0).\]

Under this symmetry the Navier-Stokes equations reduce to:
\begin{equation}\label{NSEsymreduced}
        \left\{\begin{array}{rll}
                             \dfrac{\pa u_1^\eps}{\pa t}
                             & \hspace{-2mm}
                             = - \dfrac{\pa p^\ep}{\pa x} + \ep \dfrac{\pa^2 u_1^\eps}{\pa z^2}, \, & \text{ in } \Omega\times(0,T),\\
             \\
              \dfrac{\pa u_2^\eps}{\pa t} + u_1^\eps \dfrac{\pa u_2^\eps}{\pa x}
                             & \hspace{-2mm}
                             = - \dfrac{\pa p^\ep}{\pa y} + \ep \Delta_{x,z} u_2^\eps, \, & \text{ in } \Omega\times(0,T),\\
                              \\
              0 & \hspace{-2mm} = - \dfrac{\pa p^\ep}{\pa z}, \, & \text{ in } \Omega\times(0,T),\\
                              \\
              u_1^\eps = u_2^\eps
                              & \hspace{-2mm}
                              = 0, \, & \text{ on } Q_L\times\{0,h\} \times (0, T),\\
              u_1^\eps,\; u_2^\eps
                              & \hspace{-2mm}
                              L-\text{periodic}\, & \text{ in } x,\,y, \, \text{ for each }z \in (0,h), t \in (0,T),\\
                              \\
                              u_1^\eps\big|_{t=0}
                              & \hspace{-2mm}
                              = g_1 (z), \, & \text{ in } \Omega, \\ \\
                              u_2^\eps\big|_{t=0}
                              & \hspace{-2mm}
                              = g_2(x,z), \, & \text{ in } \Omega.\\
        \end{array}\right.
\end{equation}
Above, the pressure $p^\eps$ may be chosen to vanish identically. Indeed, we deduce, from the evolution equations for $u_1^\eps $ and $u_2^\eps $, that $\pa^2 p^{\eps} /\pa x^2 = \pa^2 p^{\eps} /\pa y^2 = 0$. As we are assuming periodic boundary conditions on all the unknowns we find that $p^\eps = p^{\eps}(z,t)$ and, since $\pa p^\eps/\pa z = 0$, $p^\eps$ is constant in $z$; we choose $p^\eps = 0$.

In what follows we are interested primarily in the behavior of vorticity when the data is not compatible, i.e., when $g_1$ and $g_2$ are not necessarily vanishing at $z=0$ and $z=h$. The compatible case is much simpler to treat. We will begin our analysis with the observation that, taking the $\curl$ of the velocity equation \eqref{NSEsymreduced}  and using Theorem \ref{LHandtangPCF} and $p^\eps = 0$, we obtain the system of equations below, for $\oo^{\eps}=(\omega_1^\eps,\omega_2^\eps,\omega_3^\eps)\equiv \displaystyle{\left( -\dfrac{\pa u_2^\ep}{\pa z}, \dfrac{\pa u_1^\ep}{\pa z}, \dfrac{\pa u_2^\ep}{\pa x} \right)}$.
\begin{equation} \label{vortLH}
 \left\{\begin{array}{rll}
                             \dfrac{\pa \omega_1^\eps}{\pa t} + u_1^\eps \dfrac{\pa \omega_1^\ep}{\pa x} - \omega_2^\eps \omega_3^\eps
                             & \hspace{-2mm}
                             = \ep \Delta_{x,z}\omega_1^\eps, \, & \text{ in } \Omega\times(0,T),\\
             \\
              \dfrac{\pa \omega_2^\eps}{\pa t}
                             & \hspace{-2mm}
                             = \ep \dfrac{\pa^2 \omega_2^\eps}{\pa z^2}, \, & \text{ in } \Omega\times(0,T),\\
                              \\
              \dfrac{\pa \omega_3^\eps}{\pa t}+ u_1^\eps \dfrac{\pa \omega_3^\ep}{\pa x}  & \hspace{-2mm} =

              \ep \Delta_{x,z}\omega_3^\eps, \, & \text{ in } \Omega\times(0,T),\\
                              \\
              \dfrac{\pa \omega_1^\eps}{\pa z} = \dfrac{\pa \omega_2^\eps}{\pa z} = \omega_3^\eps
                              & \hspace{-2mm}
                              = 0, \, & \text{ on } \partial \Omega \times (0, T),\\
              \oo^\eps \;\;\; L-\text{periodic}, & \hspace{-2mm} \, & \text{ in } x,\,y, \, \text{ for each }z \in (0,h), t \in (0,T),\\
                              \\
                              \omega_1^\eps\big|_{t=0}
                              & \hspace{-2mm}
                              = -\dfrac{\pa g_2}{\pa z}, \, & \text{ in } \Omega, \\ \\
                              \omega_2^\eps\big|_{t=0}
                              & \hspace{-2mm}
                              =\dfrac{d g_1}{dz}, \, & \text{ in } \Omega.\\ \\
                              \omega_3^\eps\big|_{t=0}
                              & \hspace{-2mm}
                              = \dfrac{\pa g_2}{\pa x}, \, & \text{ in } \Omega.\\
        \end{array}\right.
\end{equation}

If the initial data $g_i$, $i=1,\,2$, were compatible then even the spatial derivatives of the solution $u_1^\eps$, $u_2^\eps$ to \eqref{NSEsymreduced} would be continuous in time, see  \cite{Evans} Chapter 7, \S 7.1, Theorem 5, allowing for energy methods to produce bounds on vorticity. As our main interest is non-compatible data, we will use a different approach.

We will obtain bounds for the vorticity $\oo^\ep$ in $L^\infty((0,T);L^1(\Omega))$, uniform with respect to $\ep$, by approximating the non-compatible problem for \eqref{NSEsymreduced} by a sequence of compatible problems. We will argue that the sequence of velocities converge, in the sense of distributions, to the solution of the non-compatible problem and we will derive estimates for the curl of the approximate velocities, uniform along the sequence, in $L^\infty((0,T);L^1(\Omega))$. It follows by the weak lower semicontinuity of the $L^1$-norm that these estimates remain true for the limit problem.

As stated in the beginning of this section, this material is independent from the remainder of the article and serves mostly a pedagogical purpose. Hence, in the theorem below, we choose to be rather loose regarding the precise regularity of the solutions involved. We point out that solutions of the heat equation are certainly as smooth as needed in the calculations performed in the proof.

\begin{theorem} \label{vortestimates}
Fix $h, L >0$ and $T>0$. Let $Q_L=[0,L]^2$ be the periodic box of sides $L$ and set $\Omega = Q_L \times (0,h)$. Let $g_1=g_1(z) \in C^{\infty}([0,h])$, $g_2=g_2(x,z) \in C^\infty ([0,L]\times[0,h])$, and suppose $g_2(0,z)=g_2(L,z)$. Assume that neither $g_1$ nor $g_2$ vanish for $z \in \{0,h\}$. Consider plane-parallel channel flow $\uu^\ep=\uu^{\ep}(x,\,y,\,z,\,t)\equiv (u_1^\ep(z,t),\,u_2^\ep(x,z,t),\,0)$. Then $u_1^\ep$, $u_2^\ep$ is the solution of
\begin{equation}\label{NSEsymreducedwopressure}
        \left\{\begin{array}{rll}
                             \dfrac{\pa u_1^\eps}{\pa t}
                             & \hspace{-2mm}
                             = \ep \dfrac{\pa^2 u_1^\eps}{\pa z^2}, \, & \text{ in } (0,h)\times(0,T),\\

             u_1^\eps
                             & \hspace{-2mm}
                              = 0, \, & \text{ at } \{z=0,h\} \times (0, T),\\

             u_1^\eps\big|_{t=0}
                              & \hspace{-2mm}
                              = g_1 (z), \, & \text{ in } (0,h), \\
                              \\
             \dfrac{\pa u_2^\eps}{\pa t} + u_1^\eps \dfrac{\pa u_2^\eps}{\pa x}
                             & \hspace{-2mm}
                             = \ep \Delta_{x,z} u_2^\eps, \, & \text{ in } [0,L]\times (0,h) \times(0,T),\\

             u_2^\eps
                              & \hspace{-2mm}
                              = 0, \, & \text{ on } [0,L]\times \{z=0,h\} \times (0, T),\\
             u_2^\eps
                              & \hspace{-2mm}
                              L-\text{periodic}\, & \text{ in } x, \, \text{ for each }z \in (0,h), t \in (0,T),\\

             u_2^\eps\big|_{t=0}
                              & \hspace{-2mm}
                              = g_2(x,z), \, & \text{ in } [0,L] \times (0,h).\\
        \end{array}\right.
\end{equation}
In addition, if $\curl \uu^\ep = \oo^\ep = (\omega_1^\ep,\,\omega_2^\ep,\,\omega_3^\ep)$ then
\[\omega_1^\ep = -\dfrac{\pa u_2^\ep}{\pa z}, \hspace{.5cm} \omega_2^\ep = \dfrac{\pa u_1^\ep}{\pa z}, \hspace{.5cm} \omega_3^\ep =
\dfrac{\pa u_2^\ep }{ \pa x}\]
 and

\begin{equation} \label{omega1est}
\begin{aligned}
\|\omega_1^\ep(\cdot,t)\|_{L^1(\Omega)} &\leq \left\| \dfrac{\pa g_2 }{\pa z}\right\|_{L^1(\Omega)} +
2L\left(\|g_2\|_{L^\infty(\Omega)} + \left\|\dfrac{\pa^2 g_2}{\pa x^2}\right\|_{L^{\infty}(\Omega)}\right) \\
 & + \, T\left(\left\|\dfrac{d g_1}{dz}\right\|_{L^1(\Omega)} + \|g_1\|_{L^\infty(\Omega)}\right)\left\|\dfrac{\pa g_2 }{ \pa x}\right\|_{L^{\infty}(\Omega)};
\end{aligned}
\end{equation}

\begin{equation} \label{omega2est}
\|\omega_2^\ep(\cdot,t)\|_{L^1(\Omega)} \leq \left\|\dfrac{d g_1}{dz}\right\|_{L^1(\Omega)} + 2\|g_1\|_{L^\infty(\Omega)};
\end{equation}

\begin{equation} \label{omega3est}
\|\omega_3^\ep(\cdot,t)\|_{L^{\infty}(\Omega)} \leq
\left\|\dfrac{\pa g_2 }{ \pa x}\right\|_{L^{\infty}(\Omega)},
\end{equation}
for all $0\leq t < T$.
\end{theorem}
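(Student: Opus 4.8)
The plan is to reduce everything to the one-dimensional heat equation in $z$ for $u_1^\ep$ and to a drift-diffusion equation for $u_2^\ep$, and then to propagate $L^1$ estimates for the spatial derivatives through the equations in \eqref{vortLH}. First I would observe that since $\uu^\ep$ has the stated PCF form, the divergence-free condition is automatic and, as already shown in the text leading up to the theorem, the pressure may be taken to vanish; hence the reduced system \eqref{NSEsymreducedwopressure} follows immediately from \eqref{NSEsymreduced}. The identification of $\omega_1^\ep,\omega_2^\ep,\omega_3^\ep$ with $-\pa_z u_2^\ep$, $\pa_z u_1^\ep$, $\pa_x u_2^\ep$ is just the definition of $\curl$ applied to a vector field of the given form.

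For the actual estimates I would proceed in order of increasing difficulty. For $\omega_3^\ep=\pa_x u_2^\ep$: differentiating the $u_2^\ep$ equation in $x$ shows $\omega_3^\ep$ satisfies the same drift-diffusion equation, $\pa_t \omega_3^\ep + u_1^\ep\pa_x\omega_3^\ep = \ep\Delta_{x,z}\omega_3^\ep$, with homogeneous Dirichlet data at $z\in\{0,h\}$ (since $u_2^\ep\equiv 0$ there, and $x$ is tangential), $x$-periodicity, and initial datum $\pa_x g_2$. Since $u_1^\ep$ depends only on $(z,t)$ and is divergence-free as a drift, the maximum principle applies and gives $\|\omega_3^\ep(t)\|_{L^\infty}\le\|\pa_x g_2\|_{L^\infty}$, which is \eqref{omega3est}. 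For $\omega_2^\ep=\pa_z u_1^\ep$: differentiating the heat equation for $u_1^\ep$ in $z$ shows $\omega_2^\ep$ solves the pure heat equation $\pa_t\omega_2^\ep=\ep\pa_z^2\omega_2^\ep$ on $(0,h)$ with the Neumann condition $\pa_z\omega_2^\ep=0$ at $z\in\{0,h\}$ (this is exactly the Lighthill boundary condition from \Cref{LHandtangPCF} with $p^\ep=0$, or equivalently $\pa_z^2 u_1^\ep=0$ there, which follows by evaluating the $u_1^\ep$ equation at the boundary where $\pa_t u_1^\ep=0$). For the heat equation with Neumann data, the $L^1$ norm is controlled: writing $\omega_2^\ep=\omega_2^{\ep,+}-\omega_2^{\ep,-}$ via the decomposition of the initial datum $dg_1/dz$ into positive and negative parts and using that the Neumann heat semigroup is positivity-preserving and conserves the integral of a nonnegative solution, one gets $\|\omega_2^\ep(t)\|_{L^1}\le\|dg_1/dz\|_{L^1}$ — but one must be careful, because the sign-definite pieces do not individually satisfy the Neumann condition. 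The cleaner route, and the one I would write, is: the solution of the Neumann heat equation on $(0,h)$ extends by even reflection across $z=0$ and $z=h$ to a periodic solution of the heat equation on a circle of length $2h$, on which $L^1$ contractivity of the heat semigroup is immediate; translating back gives $\|\omega_2^\ep(t)\|_{L^1([0,h])}\le\|dg_1/dz\|_{L^1([0,h])}$, and integrating over $Q_L$ (on which everything is constant) gives the first term of \eqref{omega2est}. The extra $2\|g_1\|_{L^\infty}$ accounts for the fact that $dg_1/dz$ itself does not satisfy the Neumann condition: one writes $\omega_2^\ep=\tilde\omega_2^\ep+\text{(correction)}$, where $\tilde\omega_2^\ep$ solves the Neumann problem with the correct compatible datum obtained by subtracting a fixed smooth profile interpolating the boundary values of $dg_1/dz$, and the correction is estimated by the maximum principle; the boundary values of $dg_1/dz$ are themselves bounded by a fixed multiple of $\|g_1\|_{L^\infty}$ via a standard interpolation (or one simply tracks constants through the explicit Green's function). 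I would keep this bookkeeping light, as the theorem statement itself warns.

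The most delicate estimate is \eqref{omega1est} for $\omega_1^\ep=-\pa_z u_2^\ep$, because $\omega_1^\ep$ satisfies the first equation in \eqref{vortLH}, $\pa_t\omega_1^\ep+u_1^\ep\pa_x\omega_1^\ep-\omega_2^\ep\omega_3^\ep=\ep\Delta_{x,z}\omega_1^\ep$, with a \emph{non-homogeneous Neumann} boundary condition $\pa_z\omega_1^\ep=0$ at $z\in\{0,h\}$ (again Lighthill with $p^\ep=0$) — wait, here the boundary condition \emph{is} homogeneous, $\pa_z\omega_1^\ep=0$; the subtlety is instead the stretching-type source term $\omega_2^\ep\omega_3^\ep$ and, as with $\omega_2^\ep$, the fact that the initial datum $-\pa_z g_2$ need not satisfy $\pa_z(\cdot)=0$ at the walls. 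I would split $\omega_1^\ep$ into: (i) the solution of the homogeneous drift-diffusion equation with Neumann condition and datum (the compatible part of) $-\pa_z g_2$, estimated in $L^1$ by the reflection/semigroup-contractivity argument above, giving the term $\|\pa_z g_2\|_{L^1}$; (ii) a correction carrying the mismatch of the Neumann condition at $t=0$, controlled by the maximum principle in terms of boundary values of $\pa_z g_2$ — these are bounded using the $u_2^\ep$ equation evaluated at the wall, $\ep\,\pa_z^2 u_2^\ep|_{wall}=\pa_t u_2^\ep|_{wall}=0$, so in fact $\pa_z^2 u_2^\ep$ vanishes at the wall and the relevant boundary data is handled by interpolating $\|g_2\|_{L^\infty}$ and $\|\pa_x^2 g_2\|_{L^\infty}$, which is the source of the $2L(\|g_2\|_{L^\infty}+\|\pa_x^2 g_2\|_{L^\infty})$ term; and (iii) the Duhamel contribution of the source $\omega_2^\ep\omega_3^\ep$, which by the heat-semigroup $L^1$ bound and steps above is at most $\int_0^t\|\omega_2^\ep(s)\|_{L^1}\|\omega_3^\ep(s)\|_{L^\infty}\,ds\le T\big(\|dg_1/dz\|_{L^1}+\|g_1\|_{L^\infty}\big)\|\pa_x g_2\|_{L^\infty}$, matching the last line of \eqref{omega1est}. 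Finally, as indicated before the theorem, all of this is first carried out for a sequence of \emph{compatible} smooth data $g_i^{(n)}\to g_i$ (so that genuine parabolic regularity and energy methods are available and the manipulations above are rigorous), the bounds obtained being uniform in $n$ and in $\ep$; passing $n\to\infty$, the compatible velocities converge in the sense of distributions to the solution of \eqref{NSEsymreducedwopressure}, and weak lower semicontinuity of the $L^1$ norm transfers \eqref{omega1est}--\eqref{omega3est} to the limit. The main obstacle is the careful handling of the boundary-layer mismatch at $t=0$ for $\omega_1^\ep$ and $\omega_2^\ep$ — i.e., producing the correction terms with constants depending only on the stated norms of $g_1,g_2$ and not on $\ep$ — and I would organize the proof around isolating that mismatch from the start via the compatible approximation.
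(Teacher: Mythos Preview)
Your overall plan --- compatible approximation, maximum principle for $\omega_3^\ep$, an $L^1$-contractivity argument for $\omega_2^\ep$ and $\omega_1^\ep$, then pass to the limit by lower semicontinuity --- matches the paper's strategy. The treatment of $\omega_3^\ep$ is fine. But your account of where the extra constants $2\|g_1\|_{L^\infty}$ and $2L(\|g_2\|_{L^\infty}+\|\pa_x^2 g_2\|_{L^\infty})$ come from is wrong, and this is not just bookkeeping.

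For $\omega_2^\ep$ you claim the extra $2\|g_1\|_{L^\infty}$ compensates for ``the boundary values of $dg_1/dz$,'' which you say are ``bounded by a fixed multiple of $\|g_1\|_{L^\infty}$ via a standard interpolation.'' That is false: no interpolation bounds $g_1'(0)$ by $\|g_1\|_{L^\infty}$ alone. The actual source of the term is the incompatibility of the \emph{Dirichlet} data for $u_1^\ep$ itself, i.e.\ $g_1(0),g_1(h)\neq 0$. In your reflection picture this shows up as the jumps of size $2|g_1(0)|$, $2|g_1(h)|$ in the odd-periodic extension of $g_1$, which contribute to the total variation and hence to $\|\pa_z u_1^\ep(t)\|_{L^1}$; it has nothing to do with whether $g_1'$ satisfies a Neumann condition. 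Your discussion of $\omega_1^\ep$ inherits the same confusion, and in addition the even/odd reflection you invoke does not preserve the drift-diffusion equation (odd $u_1^\ep$ times odd $\pa_x u_2^\ep$ is even, not odd), so the semigroup-on-the-circle argument is unavailable for that component.

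The paper resolves this cleanly by approximating in \emph{time} rather than in the initial data: it replaces the boundary value $0$ by $(1-\alpha_n(t))g_i$, with $\alpha_n$ a smooth cutoff going from $0$ to $1$ on $(0,1/n)$. For these compatible problems, evaluating the $u_i^{\ep,n}$-equations at the walls yields explicit \emph{non-homogeneous} Neumann fluxes for the approximate vorticities: $\pa_z\omega_2^n=-\ep^{-1}\alpha_n'(t)\,g_1$ and $\pa_z\omega_1^n=\ep^{-1}\alpha_n'(t)\,g_2+(1-\alpha_n(t))\,\pa_x^2 g_2$. Multiplying by $\varphi_\delta'(\omega_i^n)$ with $\varphi_\delta(s)=\sqrt{\delta^2+s^2}$ and integrating, the diffusion contributes only these boundary fluxes; integrating in $t$ and using $\int_0^T\alpha_n'=1$ produces exactly $|g_1(0)|+|g_1(h)|\le 2\|g_1\|_{L^\infty}$ for $\omega_2^n$ and $2L(\|g_2\|_{L^\infty}+\|\pa_x^2 g_2\|_{L^\infty})$ for $\omega_1^n$. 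So the extra constants are boundary-flux contributions tied to the values of $g_1,g_2$ (and $\pa_x^2 g_2$) at the walls, not to any property of their $z$-derivatives. If you keep your outline, you should replace the ``interpolation'' step by this mechanism.
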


\begin{proof}
We begin by noticing that, as $u_1^\ep$ is independent of $x,\,y$, $u_2^\ep$ is independent of $y$, and $p^\ep \equiv 0$, it follows from \eqref{NSEsymreduced} that $u_1^\ep$, $u_2^\ep$, satisfy \eqref{NSEsymreducedwopressure}.

Next, let us introduce $\alpha_n = \alpha_n(t)\in C^{\infty}([0,+\infty)) $ as below:
\begin{equation} \label{alphan}
\alpha_n=\alpha_n(t) \equiv 1 \text{ if }  t> \frac{1}{n},  \hspace{.1cm} \alpha_n=\alpha_n(t)  \equiv 0 \text{ if }  0 \leq t < \frac{1}{2n}, \hspace{.1cm} 0 \leq \alpha_n \leq 1, \hspace{.1cm} \alpha_n^{\prime}(t) \geq 0.
\end{equation}

We will start with an approximation to $u_1^\ep$, from which we will derive the bound \eqref{omega2est} for $\omega_2^\ep = \pa u_1^\ep / \pa z$.

We introduce $w_1^\eps$, the solution of
\begin{equation} \label{w1ep}
        \left\{\begin{array}{rll}
                             \dfrac{\pa w_1^\ep}{\pa t}
                             & \hspace{-2mm}
                             =   \ep \dfrac{\pa^2 w_1^\ep}{\pa z^2}, \, & \text{ in } (0,h)\times(0,T),\\
                             \\
                             w_1^\ep & \hspace{-2mm}
                             = -g_1, \, & \text{ at } \{z=0,\,h\} \times (0, T),\\
                             \\
                             w_1^\ep\big|_{t=0}
                             & \hspace{-2mm}
                             = 0, \, & \text{ in } (0,h).\\
        \end{array}\right.
\end{equation}
Observe that the data is not compatible.

Let us also introduce $v_1^\ep$ such that
\[u_1^\ep = v_1^\ep +  g_1 + w_1^\ep.\]

Now, $v_1^\ep$ satisfies a compatible problem -- both initial and boundary data vanish identically -- for a heat equation with smooth forcing, given by $\ep \partial_z^2 g_1$. We will use an approximation for $w_1^\ep$.

Set $w_1^{\ep,n}$ to be the solution of
\begin{equation} \label{w1ep}
        \left\{\begin{array}{rll}
                             \dfrac{\pa w_1^{\ep,n}}{\pa t}
                             & \hspace{-2mm}
                             =   \ep \dfrac{\pa^2 w_1^{\ep,n}}{\pa z^2}, \, & \text{ in } (0,h)\times(0,T),\\
                             \\
                             w_1^{\ep,n} & \hspace{-2mm}
                             = -\alpha_n(t)g_1, \, & \text{ at } \{z=0,\,h\} \times (0, T),\\
                             \\
                             w_1^{\ep,n}\big|_{t=0}
                             & \hspace{-2mm}
                             = 0, \, & \text{ in } (0,h).\\
        \end{array}\right.
\end{equation}
Since $\alpha_n(0)=0$ this problem is compatible.

Let $u_1^{\ep,n} \equiv v_1^\ep + g_1 + w_1^{\ep,n}.$

\begin{claim} \label{w1ntow1}
We have, passing to subsequences as needed,
\[u_1^{\ep,n} \rightharpoonup u_1^\ep,\]
in $\mathcal{D}^\prime ([0,T)\times(0,h))$, as $n\to \infty$.
\end{claim}

\begin{proof}[Proof of Claim:]

Clearly, it is enough to show that
\[w_1^{\ep,n} \rightharpoonup w_1^\ep.\]

Next, we lift the boundary data as a forcing term in the equation. Set $\ol{w_1^{\ep,n}} = w_1^{\ep,n} + \alpha_n(t)g_1$. Then $\ol{w_1^{\ep,n}}$ satisfies
\begin{equation} \label{olw1epn}
        \left\{\begin{array}{rll}
                             \dfrac{\pa  \ol{w_1^{\ep,n}}}{\pa t}
                             & \hspace{-2mm}
                             =   \ep \dfrac{\pa^2  \ol{w_1^{\ep,n}}}{\pa z^2} - \ep\alpha_n (t)\dfrac{d^2 g_1}{dz^2} + \alpha_n^\prime (t)g_1, \, & \text{ in } (0,h)\times(0,T),\\
                               \\
                        \ol{w_1^{\ep,n}} & \hspace{-2mm}
                             = 0, \, & \text{ at } \{z=0,\,h\} \times (0, T),\\
                          \\
                             \ol{w_1^{\ep,n}}\big|_{t=0}
                             & \hspace{-2mm}
                             = 0, \, & \text{ in } (0,h).\\
        \end{array}\right.
\end{equation}

We begin by observing that $\ol{w_1^{\ep,n}}$ is bounded, uniformly in $n$, in $L^{\infty}((0,T;L^2(0,h))$. Indeed, multiply the equation by $\ol{w_1^{\ep,n}}$, integrate over $(0,h)$, and divide by $\|\ol{w_1^{\ep,n}} \|_{L^2}$ to find
\[ \dfrac{d}{dt}\|\ol{w_1^{\ep,n}}\|_{L^2} \leq 2\ep \alpha_n (t)\left\|\dfrac{d^2 g_1}{dz^2} \right\|_{L^2} +
2\alpha_n^\prime (t)\| g_1 \|_{L^2},\]
where we used that $\alpha_n,\,\alpha_n^\prime \geq 0$. We obtain the uniform estimate upon integrating in time, using that $\|\ol{w_1^{\ep,n}} (0) \|_{L^2} = 0$ and
that $\alpha_n \leq 1$, $\displaystyle{\int_0^T \alpha_n^\prime (s) \, ds = 1}$.

It follows from the Banach-Alaoglu theorem that, passing to subsequences as needed, there exists $R \in L^{\infty}((0,T;L^2(0,h))$ such that $\ol{w_1^{\ep,n}} \rightharpoonup R$ weak-$\ast$ $L^{\infty}((0,T;L^2(0,h))$. Hence, we also have  $\ol{w_1^{\ep,n}} \rightharpoonup R$ in $\mathcal{D}^\prime([0,T)\times(0,h))$.

Let $\varphi \in C^{\infty}([0,T)\times [0,h])$. Assume $\varphi (\cdot,z) \in C^{\infty}_c([0,T))$ for every $z \in [0,h]$ and, additionally, $\varphi(t,0)=\varphi(t,h)=0$ for each $t \geq 0$. Multiply the equation for $\ol{w_1^{\ep,n}}$ by $\varphi$ and integrate in time and space, transferring all derivatives to $\varphi$, including the time-derivative of $\alpha_n$, to obtain a weak formulation for \eqref{olw1epn}. Since the equation is linear it follows, from weak convergence of $\ol{w_1^{\ep,n}}$ to $R$ and because $\alpha_n \to \chi_{(0,+\infty)}$ strongly in $L^1$, that
\[
-\int_0^T\int_0^h \pa_t \varphi R = \ep \int_0^T\int_0^h\pa_z^2\varphi R - \ep\int_0^T\int_0^h\varphi \dfrac{d^2g_1}{dz^2} +\int_0^h \varphi(0,z) g_1.
\]

Let us now introduce $ S \equiv R - g_1$. Clearly, it holds that:
\[ -\int_0^T\int_0^h \pa_t \varphi S = \ep \int_0^T\int_0^h\pa_z^2\varphi S + \ep \int_0^T\int_0^h\pa_z^2\varphi g_1- \varphi \dfrac{d^2g_1}{dz^2}.\]

Taking $\varphi \in C^\infty_c((0,T)\times(0,h))$ we obtain that $S$ is a distributional solution of the heat equation in $(0,T)\times(0,h)$. Taking now $\varphi \in C^\infty_c((0,T)\times[0,h])$, with $\varphi(t,0)=\varphi(t,h)=0$, we deduce that $S = - g_1$ at $z=0,\,h$. Finally, taking $\varphi \in C^\infty_c([0,T)\times(0,h))$ we deduce that $S = 0$ at $t=0$. Hence, by uniqueness for \eqref{w1ep}, it follows that $S = w_1^\ep$.

\end{proof}

Having established the claim, we now prove uniform estimates for $\omega_2^n\equiv \pa_z u_1^{\ep,n} $. Then \eqref{omega2est} will follow from these estimates, together with the weak convergence $u_1^{\ep,n} \rightharpoonup u_1^\ep$.

Start by observing that $u_1^{\ep,n}$ satisfies
\begin{equation} \label{u1epn}
      \left\{
             \begin{array}{rll}
                             \dfrac{\pa u_1^{\eps,n}}{\pa t}
                             & \hspace{-2mm}
                             = \ep \dfrac{\pa^2 u_1^{\eps,n}}{\pa z^2}, \, & \text{ in } (0,h)\times(0,T),\\

\\
             u_1^{\eps,n}
                             & \hspace{-2mm}
                              = (1-\alpha_n(t)) g_1, \, & \text{ at } \{z=0,h\} \times (0, T),\\
\\
             u_1^{\eps,n}\big|_{t=0}
                              & \hspace{-2mm}
                              = g_1 (z), \, & \text{ in } (0,h).
             \end{array}
      \right.
\end{equation}

Differentiate the equation for $u_1^{\ep,n}$ with respect to $z$ to find, easily,
\[\dfrac{\pa \omega_2^n}{\pa t} = \ep \dfrac{\pa^2 \omega_2^n}{\pa z^2}.\]
We proceed in the spirit of \eqref{LF}: evaluate the evolution equation for $u_1^{\ep,n}$, \eqref{u1epn} at the boundary $z=0$, $z=h$, to find
\begin{equation} \label{bdrycondomega2n}
\dfrac{\pa \omega_2^n}{\pa z}\big|_{z=0,h} = -\frac{1}{\ep} \alpha_n^\prime (t)g_1\big|_{z=0,h}.
\end{equation}
The initial condition for $\omega_2^n$ is clearly $\omega_2^n(z,t=0)=dg_1/dz$. Putting together the equation for $\omega_2^n$, the boundary condition \eqref{bdrycondomega2n},
and the initial data yields the Cauchy problem below:
\begin{equation} \label{omega2nsys}
\left\{
        \begin{array}{rll}
                             \dfrac{\pa \omega_2^n}{\pa t}
                             & \hspace{-2mm}
                             = \ep \dfrac{\pa^2 \omega_2^n}{\pa z^2}, \, & \text{ in } (0,h)\times(0,T),\\
                             \\
             \dfrac{\pa \omega_2^n}{\pa z}
                             & \hspace{-2mm}
                              = -\dfrac{1}{\ep} \alpha_n^\prime g_1, \, & \text{ at } \{z=0,h\} \times (0, T),\\
                             \\
             \omega_2^n\big|_{t=0}
                              & \hspace{-2mm}
                              = \dfrac{d g_1}{dz}, \, & \text{ in } (0,h).
        \end{array}
\right.
\end{equation}

Fix $\delta > 0$ and set $\varphi_\delta = \varphi_\delta (s) \equiv \sqrt{\delta^2 + s^2}$. Of course, $\varphi_\delta (s) \to |s|$ pointwise, as $\delta \to 0$. In addition,
\[|\varphi_\delta^\prime(s)| \leq 1; \hspace{1cm} \varphi_\delta^{\prime\prime}(s) \geq 0.\]
Multiply the equation for $\omega_2^n$ by $\varphi_\delta^\prime(\omega_2^n)$ and integrate on $(0,h)$ to find, upon integration by parts and using the Neumann boundary condition \eqref{bdrycondomega2n}:
\begin{equation} \label{varphideltaomega2n}
\begin{aligned}
\dfrac{d}{dt} \int_0^h \varphi_\delta(\omega_2^n) \, dz & = \ep \int_0^h \varphi_\delta^\prime (\omega_2^n) \dfrac{\pa^2 \omega_2^n}{\pa z^2} \, dz \\ \\
 & = - \ep \int_0^h \varphi_\delta^{\prime \prime}(\omega_2^n) \left(\dfrac{\pa \omega_2^n}{\pa z}\right)^2\,dz +
 \ep\int_0^h \pa_z [\varphi_\delta^\prime(\omega_2^n) \pa_z \omega_2^n]\, dz \\ \\
 & \leq -\alpha_n^\prime(t)[\varphi_\delta^\prime(\omega_2^n)g_1]\big|_{z=0}^{z=h} \\ \\
 & \leq |\alpha_n^\prime(t)|(|g_1(h)|+|g_1(0)|).
\end{aligned}
\end{equation}
Integrating \eqref{varphideltaomega2n} in time from $0$ to $t$ and taking the limit $\delta \to 0$ we obtain
\[\|\omega_2^n(t)\|_{L^1(0,h)} \leq \|\omega_2^n(t=0)\|_{L^1(0,h)} + |g_1(h)| + |g_1(0)| \leq \|dg_1/dz\|_{L^1(0,h)} + 2\|g_1\|_{L^{\infty}([0,h])}.\]
Estimate \eqref{omega2est} follows by taking $n \to \infty$, using the weak lower semicontinuity of $\|\cdot\|_{L^1}$, in view of the convergence $u_1^{\ep,n} \to u_1^\ep$ in the sense of distributions.

Next we treat $u_2^\ep$. The approximation is quite similar. We introduce $w_2^\eps$, the solution of
\begin{equation} \label{w2ep}
        \left\{\begin{array}{rll}
                             \dfrac{\pa w_2^\ep}{\pa t} + u_1^\ep \dfrac{\pa w_2^\ep}{\pa x}
                             & \hspace{-2mm}
                             =   \ep \Delta_{x,z} w_2^\ep, \, & \text{ in } (0,L)\times (0,h)\times(0,T),\\
                             \\
                             w_2^\ep & \hspace{-2mm}
                             = -g_2, \, & \text{ at } (0,L) \times \{0,\,h\} \times (0, T),\\
                             \\
                             w_2^\ep\big|_{t=0}
                             & \hspace{-2mm}
                             = 0, \, & \text{ in } (0,L)\times (0,h).\\
        \end{array}\right.
\end{equation}
We also require $w_2^\ep$ to be periodic in $x$ with period $L$. We note, as before, that the data is not compatible.

We introduce $v_2^\ep$ so that
\[u_2^\ep = v_2^\ep +  g_2 + w_2^\ep.\]

As before, $v_2^\ep$ satisfies a compatible problem -- both initial and boundary data vanish identically -- for a drift-diffusion equation, with drift $u_1^\ep$, and with smooth (in the interior of $(0,L)\times(0,h)\times (0,T)$) forcing, given by
$\ep \Delta_{x,z} g_2 - u_1^\ep \pa_x g_2$. Similarly to what we did for $u_1^\ep$, we will make use of an approximation for $w_2^\ep$.

Set $w_2^{\ep,n}$ to be the solution of
\begin{equation} \label{w2ep}
        \left\{\begin{array}{rll}
                             \dfrac{\pa w_2^{\ep,n}}{\pa t} + u_1^\ep\dfrac{\pa w_2^{\ep,n}}{\pa x}
                             & \hspace{-2mm}
                             =   \ep \Delta_{x,z} w_2^{\ep,n}, \, & \text{ in } (0,L) \times (0,h)\times(0,T),\\
                             \\
                             w_2^{\ep,n} & \hspace{-2mm}
                             = -\alpha_n(t)g_2, \, & \text{ at } (0,L)\times \{0,\,h\} \times (0, T),\\
                             \\
                             w_2^{\ep,n}\big|_{t=0}
                             & \hspace{-2mm}
                             = 0, \, & \text{ in } (0,L)\times (0,h).\\
        \end{array}\right.
\end{equation}
Impose periodic boundary conditions at $x=0$, $x=L$.
Since $\alpha_n(0)=0$ this problem is compatible.

Let $u_2^{\ep,n} \equiv v_2^\ep + g_2 + w_2^{\ep,n}.$

\begin{claim} \label{w2ntow2}
We have, passing to subsequences as needed,
\[u_2^{\ep,n} \rightharpoonup u_2^\ep,\]
in $\mathcal{D}^\prime ([0,T)\times(0,L) \times(0,h))$, periodic in $x$, as $n\to \infty$.
\end{claim}

\begin{proof}[Proof of Claim:]

As before, clearly, it is enough to show that
\[w_2^{\ep,n} \rightharpoonup w_2^\ep.\]

We lift the boundary data as a forcing term in the equation. Set $\ol{w_2^{\ep,n}} = w_2^{\ep,n} + \alpha_n(t)g_2$. Then $\ol{w_2^{\ep,n}}$ satisfies
\begin{equation} \label{olw2epn}
        \left\{\begin{array}{rll}
                             \dfrac{\pa  \ol{w_2^{\ep,n}}}{\pa t} + u_1^\ep\dfrac{\pa \ol{w_2^{\ep,n}}}{\pa x}
                             & \hspace{-2mm}
                             =   \ep \Delta_{x,z}\ol{w_2^{\ep,n}} - \ep\alpha_n (t)\Delta_{x,z} g_2  & \\
                             & \hspace{-2mm}
                             + u_1^\ep \alpha_n(t)\dfrac{\pa g_2}{\pa x} + \alpha_n^\prime (t)g_2, \, & \text{ in } (0,L) \times (0,h)\times(0,T),\\
                               \\
                        \ol{w_2^{\ep,n}} & \hspace{-2mm}
                             = 0, \, & \text{ at } (0,L) \times \{0,\,h\} \times (0, T),\\
                          \\
                             \ol{w_2^{\ep,n}}\big|_{t=0}
                             & \hspace{-2mm}
                             = 0, \, & \text{ in } (0,L) \times (0,h).\\
        \end{array}\right.
\end{equation}
Additionally, $\ol{w_2^{\ep,n}}$ is periodic in $x$.

We note that $\ol{w_2^{\ep,n}}$ is bounded, uniformly in $n$, in $L^{\infty}((0,T;L^2((0,L)\times(0,h)))$. Indeed, we have, easily,
\[ \dfrac{d}{dt}\|\ol{w_2^{\ep,n}}\|_{L^2} \leq 2\ep \alpha_n (t)\|\Delta_{x,z} g_2 \|_{L^2} + 2 \alpha_n(t) \|u_1^\ep \pa_x g_2\|_{L^2} +
2\alpha_n^\prime (t)\| g_2 \|_{L^2},\]
where we used, once again, that $\alpha_n,\,\alpha_n^\prime \geq 0$. We obtain the uniform estimate upon integrating in time, using that $\|\ol{w_2^{\ep,n}} (0) \|_{L^2} = 0$, that $\sup_{(0,T)}\|u_1^\ep g_2\|_{L^2} < \infty$, and
that $\alpha_n \leq 1$, $\displaystyle{\int_0^T \alpha_n^\prime (s) \, ds = 1}$.

The remainder of the argument used to establish Claim \ref{w1ntow1} can now be used, with the appropriate modifications, to conclude the proof of the present claim.

\end{proof}

Next, we use Claim \ref{w2ntow2} to establish \eqref{omega3est}.

Note that $u_2^{\ep,n}$ satisfies
\begin{equation} \label{u2epn}
      \left\{
             \begin{array}{rll}
                             \dfrac{\pa u_2^{\eps,n}}{\pa t} + u_1^\ep\dfrac{\pa u_2^{\ep,n}}{\pa x}
                             & \hspace{-2mm}
                             = \ep \Delta_{x,z} u_2^{\eps,n}, \, & \text{ in } (0,L)\times (0,h)\times(0,T),\\

\\
             u_2^{\eps,n}
                             & \hspace{-2mm}
                              = (1-\alpha_n(t)) g_2, \, & \text{ at } (0,L)\times \{0,h\} \times (0, T),\\
\\
             u_2^{\eps,n}\big|_{t=0}
                              & \hspace{-2mm}
                              = g_2 (z), \, & \text{ in } (0,L)\times (0,h).
             \end{array}
      \right.
\end{equation}
Moreover, $u_2^{\ep,n}$ is periodic in $x$.

Let $\omega_3^n\equiv \pa u_2^{\ep,n} / \pa x$. Differentiating the equation for $u_2^{\ep,n}$ with respect to $x$ yields, easily,
\[\dfrac{\pa \omega_3^n}{\pa t}  + u_1^\ep \dfrac{\pa \omega_3^n}{\pa x} = \ep \Delta_{x,z}\omega_3^n.\]
We now evaluate the evolution equation for $u_2^n$, \eqref{u2epn}, at the boundary $[0,L]\times\{0,\,h\}$, to obtain
\begin{equation} \label{bdrycondomega3n}
\omega_3^n(x,\cdot,t)\big|_{z=0,h} = (1-\alpha_n(t)\pa_x g_2(x,\cdot)\big|_{z=0,h}.
\end{equation}
In addition we have $\omega_3^n(x,z,t=0)=\pa_x g_2(x,z)$. Putting together the equation for $\omega_3^n$, the boundary condition \eqref{bdrycondomega3n},
and the initial data yields the Cauchy problem below:
\begin{equation} \label{omega3nsys}
\left\{
        \begin{array}{rll}
                             \dfrac{\pa \omega_3^n}{\pa t} + u_1^\ep \dfrac{\pa \omega_3^n}{\pa x}
                             & \hspace{-2mm}
                             = \ep \Delta_{x,z}\omega_3^n, \, & \text{ in } [0,L]\times (0,h)\times(0,T),\\
                             \\
             \omega_3^n(\cdot,t)
                             & \hspace{-2mm}
                              = (1-\alpha_n(t))\pa_x g_2(\cdot), \, & \text{ on } [0,L]\times \{z=0,h\} \times (0, T),\\
                             \\
             \omega_3^n\big|_{t=0}
                              & \hspace{-2mm}
                              = \pa_x g_2(x,z), \, & \text{ in } [0,L]\times (0,h).
        \end{array}
\right.
\end{equation}

Since all the coefficients and data are smooth ($u_1^\ep$ is smooth for $t>0$), we will have a smooth solution to which we can apply the maximum principle for the operator $\pa_t + u_1^\ep\pa_x - \ep\Delta_{x,z}$. We deduce that
\[\max_{[0,L]\times [0,h]\times[0,T]}|\omega_3^n(x,z,t)|  = \max\left\{\max_{[0,L]\times\{0,h\}\times [0,T]}|\omega_3^n|, \hspace{0.3cm} \max_{[0,L]\times[0,h]}|\omega_3^n(\cdot,\cdot,0)|\right\},\]
i.e.,
\[\|\omega_3^n\|_{L^\infty([0,L]\times[0,h]\times[0,T])} \leq \|\pa_x g_2\|_{L^\infty([0,L]\times[0,h])}.\]
Estimate \eqref{omega3est} follows by taking $n \to \infty$, given that $u_2^n \to u_2^\ep$ in the sense of distributions.

Finally, we analyze $\omega_1^\ep=-\pa_z u_2^\ep$. We note that the equation for $\omega_1^\ep$ is the most complicated because it is the only equation with a vorticity stretching term, namely, $-\omega_2^\ep \omega_3^\ep$. This will impact the analysis for the approximations as well.

Let $\omega_1^n\equiv -\pa u_2^{\ep,n} / \pa z$. Differentiate the equation for $u_2^{\ep,n}$ with respect to $z$ to obtain
\[\dfrac{\pa \omega_1^n}{\pa t}  + u_1^\ep \dfrac{\pa \omega_1^n}{\pa x} - \omega_2^n \omega_3^n = \ep \Delta_{x,z}\omega_1^n.\]
As in \eqref{LF}, assume that the evolution equation for $u_2^{\ep,n}$, \eqref{u2epn}, remains valid up to the boundary $[0,L]\times \{0,h\}$, so that, since $u_1^\ep$ vanishes at this boundary, for all $t>0$, we have
\[\dfrac{\pa u_2^n}{\pa_t}\big|_{z=0,h} = \ep (1-\alpha_n(t))\pa^2_x g_2 - \ep \dfrac{\pa \omega_1^n}{\pa z}\big|_{z=0,h},\]
hence
\begin{equation} \label{bdrycondomega1n}
\dfrac{\pa \omega_1^n}{\pa z}\big|_{z=0,h} = \frac{1}{\ep} \alpha_n^\prime g_2\big|_{z=0,h} + (1-\alpha_n(t))\pa^2_x g_2.
\end{equation}
In addition we have $\omega_1^n(z,t=0)=-\pa_z g_2$. Putting together the equation for $\omega_1^n$, the boundary conditions \eqref{bdrycondomega1n}, and the initial data yields the Cauchy problem below:
\begin{equation} \label{omega1nsys}
\left\{
        \begin{array}{rll}
                             \dfrac{\pa \omega_1^n}{\pa t}  + u_1^\ep \dfrac{\pa \omega_1^n}{\pa x} - \omega_2^n \omega_3^n
                             & \hspace{-2mm}
                             = \ep \Delta_{x,z}\omega_1^n, \, & \text{ in } [0,L]\times (0,h)\times(0,T),\\
                             \\
             \dfrac{\pa \omega_1^n}{\pa z}
                             & \hspace{-2mm}
                              = \dfrac{1}{\ep} \alpha_n^\prime g_2 + (1-\alpha_n(t))\pa^2_x g_2, \, & \text{ at } [0,L]\times \{z=0,h\} \times (0, T),\\
                             \\
             \omega_1^n\big|_{t=0}
                              & \hspace{-2mm}
                              = -\pa_z g_2 (x,z), \, & \text{ in } [0,L]\times (0,h).
        \end{array}
\right.
\end{equation}

Fix $\delta > 0$ and consider $\varphi_\delta = \varphi_\delta (s)$.
As we did for $\omega_2^n$, multiply the equation for $\omega_1^n$ by $\varphi_\delta^\prime(\omega_1^n)$ and integrate on $[0,L]\times (0,h)$ to find, upon integration by parts and using the Neumann boundary condition \eqref{bdrycondomega1n}:
\begin{equation} \label{varphideltaomega1n}
\begin{aligned}
\dfrac{d}{dt} \int_0^L \int_0^h \varphi_\delta(\omega_1^n) \,dx dz & = \ep \int_0^L \int_0^h \varphi_\delta^\prime (\omega_1^n)
\Delta_{x,z}\omega_1^n\,dx dz  + \int_0^L\int_0^h \varphi_\delta^\prime(\omega_1^n)\omega_2^n\omega_3^n \, dxdz\\ \\
 & = - \ep \int_0^L\int_0^h \varphi_\delta^{\prime \prime}(\omega_1^n) |\nabla_{x,z}\omega_1^n|^2\,dxdz +
 \ep\int_0^L\int_0^h \pa_z [\varphi_\delta^\prime(\omega_1^n) \pa_z \omega_1^n]\, dxdz \\ \\
 & + \int_0^L\int_0^h \varphi_\delta^\prime(\omega_1^n)\omega_2^n\omega_3^n \, dxdz\\ \\
 & \leq \alpha_n^\prime(t)\int_0^L [\varphi_\delta^\prime(\omega_1^n)g_2]\big|_{z=0}^{z=h} \,dx + \ep (1-\alpha_n(t))\int_0^L
[\varphi_\delta^\prime(\omega_1^n)\pa_x^2 g_2]\big|_{z=0}^{z=h} \,dx \\ \\
& + \|\omega_2^n(\cdot,t)\|_{L^1(0,h)}\|\omega_3^n(\cdot,t)\|_{L^\infty([0,L]\times[0,h])}.\\ \\
\end{aligned}
\end{equation}
Integrating \eqref{varphideltaomega1n} in time from $0$ to $t$ and taking the limit $\delta \to 0$ we obtain
\begin{equation*}
\begin{aligned}
\|\omega_2^n(t)\|_{L^1([0,L]\times (0,h))} & \leq \|\omega_1^n\big|_{t=0}\|_{L^1([0,L]\times(0,h))} + 2L(\|g_2\|_{L^\infty([0,L]\times (0,h))} +\|\pa_x^2 g_2\|_{L^{\infty}([0,L]\times(0,h))})\\ \\
 &+ T\|\omega_2^n\|_{L^\infty((0,T);L^1(0,h))}
\|\omega_3^n \|_{L^\infty([0,L]\times[0,h]\times(0,T))}\\ \\
& =  \|\pa_z g_2\|_{L^1([0,L]\times(0,h))} + 2L(\|g_2\|_{L^\infty([0,L]\times (0,h))} +\|\pa_x^2 g_2\|_{L^{\infty}([0,L]\times(0,h))})\\ \\
& + T(\|dg_1/dz\|_{L^1(0,h)} + 2\|g_1\|_{L^{\infty}([0,h])})
\|\pa_x g_2\|_{L^\infty([0,L]\times[0,h])}.\\
\end{aligned}
\end{equation*}
Estimate \eqref{omega1est} follows by taking $n \to \infty$, given that $u_2^n \rightharpoonup u_2^\ep$ in the sense of distributions.
This concludes the proof.

\end{proof}

Analogous results hold for flow in the pipe. More precisely, it is possible to obtain a version of Proposition \ref{LHandtangPCF} for flow in a pipe and a version of Theorem \ref{vortestimates} for PPF.

%
%
\section{Plane-parallel channel flows}\label{S:PPF}

In this section, we will present the asymptotic description of the vanishing viscosity limit for PCF, significantly extending the
analysis in \cite{MNW10,MT08}.

We consider NSE and EE in an infinite channel, but impose periodic boundary
conditions in the streamwise direction. The fluid domain is
\begin{align*}
    \Omega := (0, L)^2 \times (0, h)
        \text{ with } \Gamma := \prt \Omega = (0, L)^2 \times \set{0, h},
\end{align*}
for a fixed $h > 0$, and we consider flows which are periodic
in both the $x$ and $y$ directions, with period $L>0$.

We study plane-parallel solutions of the fluid equations of the form:
\begin{equation}\label{e:PP velocity_NSE}
\uu = (u_1 (z, t), \, u_2 (x, z, t), \, 0 ).
\end{equation}
The initial data and the forcing will be taken to satisfy the same symmetry,
that is:
\begin{equation*}\label{e:PP data}
            \f = (f_1 (z, t), \, f_2 (x, z, t), \, 0),
\qquad
            \uu_0 = (u_{0, \, 1} (z), \, u_{0, \, 2} (x, z), \, 0).
\end{equation*}
Under this symmetry, all vector fields are divergence free and automatically
satisfy the no-penetration condition. In addition, it is easy to see that the pressure can be taken to be
zero in both the NSE and EE and, therefore, will not enter the ensuing
calculations.

It can be shown that the forward evolution under the NSE and EE preserves the
symmetry, at least for strong solutions (cf. e.g. \cite{MT08}).
We hence consider the symmetry-reduced NSE (\ref{e:NSE}), which become the following weakly
non-linear system:
\begin{equation}\label{e:NSE_PP}
\left\{\begin{array}{rl}
                             \spacer
                                        \dfrac{\pa u^\eps_1}{\pa t} - \ep
\dfrac{\pa^2 u^\eps_1}{\pa z^2}
                                        & \hspace{-2mm}
                                        = f_1
                                            \text{ in } \Omega\times(0,T),\\
                             \spacer
                                        \dfrac{\pa u^\eps_2}{\pa t}
                                        - \ep \dfrac{\pa^2 u^\eps_2}{\pa x^2}
                                        - \ep \dfrac{\pa^2 u^\eps_2}{\pa z^2}
                                        + u^\eps_1 \, \dfrac{\pa u^\eps_2}{\pa
x}
                                        & \hspace{-2mm}
                                        = f_2
                                            \text{ in } \Omega\times(0,T),\\
                              \halfspacer
                                        u^\eps_2 \text{ is periodic}
                                        & \hspace{-2mm}
                                        \text{in $x$ with period $L$},\\
                              \halfspacer
                                        u^\eps_i
                                        & \hspace{-2mm}
                                        = 0, \text{ } i=1,2, \text{ on }
\Gamma,\\
                                        u^\eps_i \big|_{t=0}
                                        & \hspace{-2mm}
                                        = u_{0, \, i}, \text{ } i=1,2, \text{ in
} \Omega.
        \end{array}\right.
\end{equation}

Similarly, we consider the symmetry-reduced  EE (\ref{e:EE}):
\begin{equation}\label{e:EE_PP}
\left\{\begin{array}{rl}
                             \spacer
                                        \dfrac{\pa u^0_1}{\pa t}
                                        & \hspace{-2mm}
                                        = f_1
                                            \text{ in } \Omega\times(0,T),\\
                             \spacer
                                        \dfrac{\pa u^0_2}{\pa t} + u^0_1 \,
\dfrac{\pa u^0_2}{\pa x}
                                        & \hspace{-2mm}
                                        = f_2
                                            \text{ in } \Omega\times(0,T),\\
                              \spacer
                                        u^0_2
                                        & \hspace{-2mm}
                                        \text{is periodic in $x$ direction with
period $L$},\\
                                        u^0_i \big|_{t=0}
                                        & \hspace{-2mm}
                                        = u_{0, \, i}, \text{ } i=1,2, \text{ in
} \Omega.
        \end{array}\right.
\end{equation}
We assume that the data, $\uu_0$ and $\f$, are sufficiently regular, but
ill-prepared in the sense that
\begin{equation}\label{e:Data_reg_PPF}
        \uu_0 \in H \cap H^k(\Omega),
            \quad
        \f \in C(0, T ; H \cap H^k(\Omega)),
            \quad
        \text{for a sufficiently large } k \geq0.
\end{equation}
Note that we do not assume that $\uu_0$ vanishes at the boundary, nor does $\f$ have
to be compatible with $\uu_0$ at $t=0$.
Under these regularity assumptions, the NSE and EE have, both, global-in-time strong
solutions  (see e.g. \cite{MT08}).

Under the plane-parallel symmetry, the tangential components of
the EE velocity $\blds{u}^0$ need not vanish. Therefore, a viscous boundary layer is expected
to form  to account for the mismatch in the tangential components of the NSE and
EE velocities at the boundary. The fact that the data is ill prepared leads to
an initial layer for NSE, which also affects the zero-viscosity limit.

In the following subsections, we construct correctors for the Euler flow that
lead to an asymptotic expansion of $\uu^\eps$ at small viscosity $\ep$. This
expansion will be used to study the boundary layer and the vanishing viscosity
limit. This expansion is not a Prandtl-type expansion as that used in
\cite{MNW10} and lends itself somewhat naturally to study accumulation of
vorticity at the boundary in the limit.

\subsection{
Viscous approximation and
convergence result}\label{S:PPF_THM}




We postulate an approximation of the viscous solution of the form
\begin{equation}\label{e:assympt exp_PPF}
        \uu^\eps
                \cong
                    \uu^0 + \blds{\Tht},
\end{equation}
where $\blds{\Tht}$ is a corrector to the inviscid solution $\uu^0$.
The corrector depends on $\eps$, but for sake of notation, we will not
explicitly denote it.
The corrector will be assumed to satisfy the same
symmetry as the fluid velocities, that is,
\begin{equation}\label{e:Tht_PPF}
        \blds{\Tht}
                = \big(\Tht_1 (z, t), \, \Tht_2 (x, z, t), \, 0 \big).
\end{equation}
This assumption is justified by the fact that the flow remains laminar and there
is no boundary layer separation in this case (see e.g. \cite{MT08}).

In recent related work, see \cite{MNW10}, a viscous aproximation to the NS
solution similar to (\ref{e:assympt exp_PPF})was introduced, where the
corrector, which we  call $\blds{\Upsilon} = \big(\Upsilon_1 (z, t), \,
\Upsilon_2 (x, z, t), \, 0 \big)$ here, is defined as the solution of the
\textit{weakly coupled parabolic system},
\begin{equation}\label{e:Eq_MNW10}
        \left\{
                \begin{array}{rl}
                        \spacer \displaystyle
                                \dfrac{\pa \Upsilon_1}{\pa t} - \ep \dfrac{\pa^2
\Upsilon_1}{\pa z^2} = 0
                        & \displaystyle
                                \text{in } \Omega \times (0, T),\\
                        \spacer \displaystyle
                                \dfrac{\pa \Upsilon_2}{\pa t}
                                - \ep \dfrac{\pa^2 \Upsilon_2}{\pa z^2}
                                + \Upsilon_1 \dfrac{\pa \Upsilon_2}{\pa x}
                                + u^0_1 \dfrac{\pa \Upsilon_2}{\pa x}
                                + \Upsilon_1 \dfrac{\pa u^0_2}{\pa x}
                                = 0
                        & \displaystyle
                                \text{in } \Omega \times (0, T),\\
                        \spacer \displaystyle
                                \Upsilon_i = - u^0_i
                        &
                                \text{on } \Gamma \times (0,T), \text{ } i=1,2,\\
                        \blds{\Upsilon}|_{t=0} = 0.
                \end{array}
        \right.
\end{equation}
In fact, (\ref{e:Eq_MNW10}) is a reduced form of Prandtl's equations under the
plane-parallel symmetry given in (\ref{e:PP velocity_NSE}).
Assuming well-prepared data, bounds on $\Upsilon$ with an explicit dependency on
$\ep$ in the Sobolev space $H^k$,  for some $k\geq0$, were derived from energy
estimates. Using these bounds,  the authors verify the validity of the
vanishing viscosity limit. Certain higher-order expansions are discussed as
well.

In this article, we tackle the case of ill-prepared data. Since the second
component of the velocity is advected by the first component in
(\ref{e:NSE_PP}),  we first
construct $\Tht_1$; then, using $\Tht_1$, we construct the second component of the corrector,
$\Tht_2$.

The ingredients needed to construct both $\Tht_1$ and $\Tht_2$ are an explicit solution of
the heat equation on the half-line and a solution of a drift-diffusion
equation in a periodic channel.

Let $\Phi[g]=\Phi[g](\eta,t)$, $\eta>0$, $t>0$, denote the solution of \eqref{e:HEAT_eq} with boundary data $g=g(t)$. Set
\begin{equation}\label{e:Tht_LR}
\left\{
             \begin{array}{l}
                \spacer \displaystyle
                \Tht_{1, \, L} (z, t)
                            = \Phi[g_L](z,t)\\
                \displaystyle
                \Tht_{1, \, U} (z, t)
                            = \Phi[g_U](h-z,t),
             \end{array}
\right.
\end{equation}
where $g_L=g_L(t)\equiv u_1^0(0,t)$ and $g_U=g_U(t)\equiv u_1^0(h,t)$ are the boundary data at $z=0$ and $z=h$, respectively, for the first component of the EE solution.

Next, we introduce a smooth
cut-off function, $\sigma$ such that
\begin{equation}\label{e:sig_LR}
\begin{array}{ccc}

\sig \in C^{\infty}, & 0\leq \sig \leq 1, &
                \sig(z) =
                        \left\{
                        \begin{array}{l}
                                \spacer
                                        1, \text{ } 0 \leq z \leq h/4,\\
                                        0, \text{ } z  \geq h/2.
                        \end{array}
                        \right.
\end{array}
\end{equation}
The role of this cut-off is to localize the correctors near each of the
channel walls.

Using (\ref{e:Tht_LR}) and (\ref{e:sig_LR}), we define the first component
of the corrector as:
\begin{equation}\label{e:Tht_1}
        \Tht_1(z, t)
                = \sig(z) \, \Tht_{1, \, L} (z, t)
                    + \sig (h-z) \, \Tht_{1, \, U} (z, t).
\end{equation}
It is easy to see that $\Tht_1$ satisfies
\begin{equation}\label{e:Eq_Tht_1}
        \left\{
                \begin{array}{rl}
                        \spacer \displaystyle
                                \dfrac{\pa \Tht_1}{\pa t} - \ep \dfrac{\pa^2
\Tht_1}{\pa z^2}
                        & \hspace{-2mm}
                                =
                                   - \ep \Big\{
                                                2 \sig^{\pri} (z)\, \dfrac{\pa
\Tht_{1, \, L}}{\pa z}

                                                - 2 \sig^{\pri} (h-z)\, \dfrac{\pa
\Tht_{1, \, U}}{\pa z} \Big. \\

& \hspace{-2mm} + \Big. \sig^{\pri \pri} (z) \, \Tht_{1,
\, L}
                                                + \sig^{\pri \pri} (h-z) \, \Tht_{1,
\, U}
                                            \Big\}
                                \quad
                                \text{in } \Omega \times (0, T),\\
                        \spacer \displaystyle
                                \Tht_1
                        & \hspace{-2mm}
                                = - u^0_1
                                \quad
                                \text{on } \Gamma \times (0, T), \\
                        \Tht_1|_{t=0}
                        & \hspace{-2mm}
                                = 0.
                \end{array}
        \right.
\end{equation}
The right-hand side of (\ref{e:Eq_Tht_1})$_1$ is an $e.s.t.$, a fact that will be
verified later.


Next, we turn to the construction of the second component of the corrector. Let $\Psi[\mathcal{U},G,g]=\Psi[\mathcal{U},G,g](\tau,\eta,t)$, $0 < \tau < L$, $\eta > 0$, $t>0$, periodic in $\tau$, denote the solution of the drift-diffusion equation \eqref{e:HEAT_like_eq} with drift velocity $\mathcal{U}= \mathcal{U}(\eta,t)$, forcing $G=G(\tau,\eta,t)$ and boundary data $g=g(\tau,t)$, at $\eta=0$. We introduce the lower and upper drift velocity, forcing and boundary data, as follows:
\begin{equation}\label{e:UGg L}
\begin{array}{l}
\mathcal{U}_L = \mathcal{U}_L (\eta,t) \equiv u_1^0(\eta,t) + \Tht_1(\eta,t)
\\ \\
G_L = G_L(\tau,\eta,t) \equiv - \Tht_1(\eta,t)\partial_{\tau}u_2^0(\tau,\eta,t)
\\ \\
g_L = g_L(\tau,t)=-u_2^0(\tau,0,t)
\end{array}
\end{equation}

\vspace{.5cm}

and

\vspace{.5cm}

\begin{equation}\label{e:UGg R}
\begin{array}{l}
\mathcal{U}_U = \mathcal{U}_U (\eta,t) \equiv u_1^0(h-\eta,t) + \Tht_1(h-\eta,t)
\\ \\
G_U = G_U(\tau,\eta,t) \equiv - \Tht_1(h-\eta,t)\partial_{\tau}u_2^0(\tau,h-\eta,t)
\\ \\
g_U = g_U(\tau,t)=-u_2^0(\tau,h,t).
\end{array}
\end{equation}

Set
\begin{equation}\label{e:Tht2_LR}
\left\{
             \begin{array}{l}
                \spacer \displaystyle
                \Tht_{2, \, L} (x,z, t)
                            = \Psi[\mathcal{U}_L,G_L,g_L](x,z,t)\\
                \displaystyle
                \Tht_{2, \, U} (x,z, t)
                            = \Psi[\mathcal{U}_U,G_U,g_U](x, h-z,t),
             \end{array}
\right.
\end{equation}
and note that $\Tht_{2,\,L}$ is defined on $x \in (0,L)$, $z>0$, $t>0$, while $\Tht_{2,\,U}$ is defined on $x \in (0,L)$,
$z<h$, $t>0$.

We will use once more the cut-off $\sig$ (\ref{e:sig_LR}) to define the second component of our corrector:
\begin{equation}\label{e:Tht_2}
        \Tht_2(x, z, t)
                = \sig(z) \, \Tht_{2, \, L} (x, z, t)
                    + \sig(h-z) \, \Tht_{2, \, U} (x, z, t).
\end{equation}

It is a simple calculation to verify that $\Tht_2$ satisfies:
\begin{equation}\label{e:Eq_Tht_2}
        \left\{
                \begin{array}{l}
                        \spacer \displaystyle
                                \dfrac{\pa \Tht_2}{\pa t} - \ep \left(\dfrac{\pa^2
\Tht_2}{\pa z^2}
                                + \dfrac{\pa^2 \Tht_2}{\pa x^2} \right)
                                + (u^0_1 + \Tht_1) \dfrac{\pa \Tht_2}{\pa x}
                                \\
                        \spacer \qquad \qquad
                        		=
				- \Tht_1 \dfrac{\pa u^0_2}{\pa x}

- \ep \Big\{
                                                2 \sig^{\pri} (z)\, \dfrac{\pa
\Tht_{2, \, L}}{\pa z}

                                                - 2 \sig^{\pri} (h-z)\, \dfrac{\pa
\Tht_{2, \, U}}{\pa z} \Big. \\

 \spacer \qquad \qquad \qquad \qquad \qquad + \Big. \sig^{\pri \pri} (z) \, \Tht_{2,
\, L}
                                                + \sig^{\pri \pri} (h-z) \, \Tht_{2,
\, U}
                                            \Big\}
                                \quad
                                \text{in } \Omega \times (0, T),

				\\
                        \spacer \displaystyle
                                \Tht_2 = - u^0_2
                        \quad
                                \text{on } \Gamma  \times (0, T), \\
                        \Tht_2|_{t=0} = 0,
                \end{array}
        \right.
\end{equation}
where the second term on the right hand side of (\ref{e:Eq_Tht_2}) is again an
$e.s.t.$.


The needed bounds in $L^p$ on the corrector $\blds{\Tht}$  with an explicit
dependence
on $\ep$ are stated and proved separately in \cref{S:est_Tht_PPF} below.


Following a well-known approach \cite{Lions1973,VishikLyusternik1957}, we
define an approximation to the viscous solution combining the corrector with the
inviscid solution, and estimate the approximation error explicitly in terms of
$\ep$ in various norms.
We then set:
\begin{equation}\label{e:corrected velocity PPF}
        \blds{v}^\ep
                = \big(v^\ep_1 (z, t), \, v^\ep_2 (x, z, t), \, 0 \big)
                := \uu^\eps - \uu^0 -\blds{\Tht},
\end{equation}
and let $\blds\omega^\ep$ denote the associated vorticity,
\begin{equation}\label{e:omega_PPF}
        \blds{\omega}^\ep
                = \big(
                            \omega^\ep_1 (x,z,t), \, \omega^\ep_2 (z,t), \,
\omega^\ep_3 (x,z,t)
                    \big)
                :=
                    \curl \blds{v}^\ep
                =
                    \Big(
                            -\dfrac{\pa v^\ep_2}{\pa z}, \,
                            \dfrac{\pa v^\ep_1}{\pa z}, \,
                            \dfrac{\pa v^\ep_2}{\pa x}
                    \Big).
\end{equation}

With these definitions, we are ready to state and prove our main convergence
result for PCF.

\begin{theorem}\label{T:PPF}
        Under the assumptions (\ref{e:Data_reg_PPF}), we have:
        \begin{align}\label{e:conv_PPF}
            \begin{split}
                \| \blds{v}^\ep \|_{L^{\infty}(0, T; L^2(\Omega))}
                    + \ep^{\frac{1}{2}} \| \nabla \blds{v}^\ep \|_{L^{2}(0, T;
L^2(\Omega))}
                                &\leq \kap_T \, \ep,\\
                \| \omega^\ep_1 \|_{L^{\infty}(0, T; L^2(\Omega))}
                    + \ep^{\frac{1}{2}} \| \nabla\omega^\ep_1 \|_{L^{2}(0, T;
L^2(\Omega))}
                                &\leq \kap_T \, \ep^{\frac{1}{4}}, \\
                \| \omega^\ep_2 \|_{L^{\infty}(0, T; L^2(\Omega))}
                    + \ep^{\frac{1}{2}} \| \nabla\omega^\ep_2 \|_{L^{2}(0, T;
L^2(\Omega))}
                                &\leq \kap_T \, \ep^{\frac{3}{4}}, \\
                \| \omega^\ep_3 \|_{L^{\infty}(0, T; L^2(\Omega))}
                    + \ep^{\frac{1}{2}} \| \nabla\omega^\ep_3 \|_{L^{2}(0, T;
L^2(\Omega))}
                                &\leq \kap_T \, \ep, \\
                \norm{\blds{\omega}^\eps}_{L^\iny(0, T; L^2(\Omega))}
                    + \eps^{\frac{1}{2}} \norm{\grad \blds{\omega}^\epsilon}
                        _{L^2(0, T; L^2(\Omega))}
                    &\le \kap_T \eps^{\frac{1}{4}}, \\
                \norm{\vv^\eps}_{L^\iny(0, T; H^1(\Omega))}
                    + \eps^{\frac{1}{2}} \norm{\vv^\epsilon}
                        _{L^2(0, T; H^2(\Omega))}
                    &\le \kap_T \eps^{\frac{1}{4}}.
            \end{split}
        \end{align}
In particular, the vanishing viscosity limit holds with convergence rate:
\begin{equation}\label{e:VVL_PPF}
        \| \uu^\eps - \uu^0 \|_{L^{\infty}(0, T; L^2(\Omega))}
                \leq \kap_T \, \ep^{\frac{1}{4}}.
\end{equation}
In addition,
\begin{align}\label{e:conv_PPF_measure}
    \curl \uu^\eps \to \curl \uu^0 + (\uu_0 \times \n) \mu
        \quad \weak^* \text{ in } L^\iny(0, T; \Cal{M}(\ol{\Omega})),
\end{align}
where $\Cal{M}(\ol{\Omega})$ is the space of Radon measures on $\ol{\Omega}$ and
$\mu$ is a measure supported on $\Gamma$, on which $\mu|_\Gamma$ agrees with
normalized  surface area.
\end{theorem}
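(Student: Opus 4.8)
The plan is to control the remainder of the asymptotic expansion by energy estimates, exploiting the weakly nonlinear structure of \cref{e:NSE_PP}. First I would derive the equations for $\vv^\ep=\uuep-\uuo-\blds{\Tht}$ by inserting $\uuep=\uuo+\blds{\Tht}+\vv^\ep$ into \cref{e:NSE_PP} and subtracting \cref{e:EE_PP} together with the corrector systems \cref{e:Eq_Tht_1} and \cref{e:Eq_Tht_2}. The first component obeys the decoupled heat equation $\pa_t v_1^\ep-\ep\,\pa_z^2 v_1^\ep=\ep\,\pa_z^2 u_1^0+e.s.t.$ with $v_1^\ep|_\Gamma=0$, $v_1^\ep|_{t=0}=0$; for the second, after expanding $u_1^\ep\,\pa_x u_2^\ep$ and using the crucial cancellation of the term $\Tht_1\,\pa_x u_2^0$ (which is exactly the inhomogeneity built into \cref{e:Eq_Tht_2}), one finds a drift--diffusion equation $\pa_t v_2^\ep-\ep\,\Delta v_2^\ep+u_1^\ep\,\pa_x v_2^\ep=\ep\,\Delta u_2^0-v_1^\ep\,\pa_x(u_2^0+\Tht_2)+e.s.t.$, again with zero initial and boundary data and $L$-periodic in $x$. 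Using the corrector bounds of \cref{S:est_Tht_PPF} (in particular $\norm{\blds{\Tht}}_{L^2},\norm{\pa_x\Tht_2}_{L^2}=O(\ep^{1/4})$) and the elementary $\norm{v_1^\ep}_{L^\infty(\Omega\times(0,T))}=O(\ep)$ from Duhamel's formula, both right-hand sides are $O(\ep)$ in $L^2$.

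Next I would run the $L^2$ energy estimates in the order $v_1^\ep$, then $v_2^\ep$ (whose forcing involves $v_1^\ep$). Since $u_1^\ep$ is independent of $x$, the transport term is skew-symmetric and drops out, so the basic estimates give the first line of \cref{e:conv_PPF}, $\norm{\vv^\ep}_{L^\infty(0,T;L^2)}+\ep^{1/2}\norm{\grad\vv^\ep}_{L^2(0,T;L^2)}\le\kap_T\,\ep$. For the vorticity I would differentiate these equations. The component $\omega_3^\ep=\pa_x v_2^\ep$ satisfies the same drift--diffusion equation with the $x$-differentiated (still $O(\ep)$) forcing, and since $\pa_x$ is tangential to $\Gamma$ it inherits the homogeneous Dirichlet condition $\omega_3^\ep|_\Gamma=0$; a clean energy estimate yields the $\kap_T\,\ep$ bound. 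The components $\omega_2^\ep=\pa_z v_1^\ep$ and $\omega_1^\ep=-\pa_z v_2^\ep$ are harder, since the normal derivative need not vanish on $\Gamma$: one must produce a Lighthill-type boundary condition as in Proposition~\ref{LHandtangPCF}, by evaluating the $v_i^\ep$-equations on $\Gamma$ --- where $v_i^\ep$, $\pa_t v_i^\ep$, $u_1^\ep$ and the tangential second derivatives all vanish, and where the term $v_1^\ep\,\pa_x(\cdot)$ drops out because $v_1^\ep|_\Gamma=0$ --- obtaining the $O(1)$ Neumann data $\pa_z\omega_2^\ep|_\Gamma=-\pa_z^2 u_1^0|_\Gamma+e.s.t.$ and $\pa_z\omega_1^\ep|_\Gamma=\Delta u_2^0|_\Gamma+e.s.t.$. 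The $\omega_1^\ep$-equation carries in addition a vortex-stretching term $\pa_z u_1^\ep\cdot\pa_x v_2^\ep$, which is controlled since the large factor $\pa_z\Tht_1=O(\ep^{-1/2})$ multiplies $\omega_3^\ep=O(\ep)$, for a net $O(\ep^{1/2})$ in $L^2$.

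The main obstacle is the energy estimate with the $O(1)$ Neumann data. Here I would bound the boundary integral $\ep\int_\Gamma(\pa_{\n}\omega_i^\ep)\,\omega_i^\ep\,dS$ by $C\ep\norm{\omega_i^\ep}_{L^2(\Gamma)}$, use the trace inequality $\norm{\omega_i^\ep}_{L^2(\Gamma)}^2\lesssim\norm{\omega_i^\ep}_{L^2(\Omega)}\norm{\grad\omega_i^\ep}_{L^2(\Omega)}+\norm{\omega_i^\ep}_{L^2(\Omega)}^2$, absorb a small multiple of $\ep\norm{\grad\omega_i^\ep}^2$ into the dissipation by Young's inequality, and so reduce to a differential inequality $y'\lesssim\ep\,y^{1/3}+\ep\,y^{1/2}$ for $\omega_2^\ep$ (resp. $y'\lesssim\ep\,y^{1/3}+\ep^{1/2}\,y^{1/2}$ for $\omega_1^\ep$), with $y=\norm{\omega_i^\ep(t)}_{L^2}^2$ and $y(0)=0$; integrating gives $y=O(\ep^{3/2})$ for $\omega_2^\ep$ and $y=O(\ep)$ for $\omega_1^\ep$, together with the matching $\ep^{1/2}$-weighted dissipation bounds. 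These furnish the remaining lines of \cref{e:conv_PPF} (with $\kap_T\,\ep^{1/4}$ a safe common rate); summing the three components and using $\dv\vv^\ep=0$ then controls $\norm{\vv^\ep}_{L^\infty H^1}$ and $\ep^{1/2}\norm{\vv^\ep}_{L^2 H^2}$.

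Finally, for the vanishing-viscosity limit and the vortex-sheet formula I would write $\curl\uuep=\curl\uuo+\curl\blds{\Tht}+\curl\vv^\ep$. The triangle inequality with $\norm{\blds{\Tht}}_{L^\infty L^2}=O(\ep^{1/4})$ and $\norm{\vv^\ep}_{L^\infty L^2}=O(\ep)$ gives \cref{e:VVL_PPF}, while $\norm{\curl\vv^\ep}_{L^\infty L^2}=O(\ep^{1/4})\to0$ shows that $\curl\vv^\ep$ contributes nothing to the weak-$\ast$ limit in $L^\infty(0,T;\Cal{M}(\ol{\Omega}))$. It remains to identify the weak-$\ast$ limit of $\curl\blds{\Tht}=(-\pa_z\Tht_2,\,\pa_z\Tht_1,\,\pa_x\Tht_2)$: the tangential component $\pa_x\Tht_2=O(\ep^{1/4})$ in $L^2$ vanishes, whereas $\pa_z\Tht_1$ and $\pa_z\Tht_2$, being $z$-derivatives of boundary-layer profiles built from $\Phi[\cdot]$, form approximate identities concentrating on $\Gamma$ as $\ep\to0$, with total mass across each layer equal to the corresponding boundary value of $\uuo$ (using $\int_0^\infty\pa_\eta\Phi[g](\eta,t)\,d\eta=-g(t)$ and the self-similar width $\sqrt{\ep t}$ of $\Phi[g]$); making this rigorous with the corrector properties of \cref{S:est_Tht_PPF} and identifying the limiting density on each face of $\Gamma$ with $\uu_0\times\n$ gives \cref{e:conv_PPF_measure}, with $\mu|_\Gamma$ the normalized surface measure.
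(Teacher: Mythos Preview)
Your proposal is essentially the paper's proof: derive the remainder system for $\vv^\ep$, run $L^2$ energy estimates on $v^\ep_1$ then $v^\ep_2$ (using that $u^\ep_1$ is independent of~$x$), obtain Lighthill--type Neumann conditions for $\omega^\ep_1,\omega^\ep_2$ by restricting the $\vv^\ep$-equations to $\Gamma$, and close with the trace inequality. Two small technical points you should be aware of when you carry this out: (i) $\|\pa_z\Tht_1\|_{L^\infty}\le\kap_T(1+t^{-1/2})\ep^{-1/2}$ carries an integrable time singularity, so your Gr\"onwall for $\omega^\ep_1$ needs an integrating factor of the form $\exp(-\kap t-\kap\sqrt{t})$; (ii) the $\omega^\ep_1$-equation also contains the term $v^\ep_1\,\pa^2_{xz}(u^0_2+\Tht_2)$, and $\pa_z\Tht_2$ is not small in $L^\infty$. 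The paper handles this by integrating by parts in~$z$; your stated $L^\infty$ bound $\|v^\ep_1\|_{L^\infty}=O(\ep)$ together with $\|\pa^2_{xz}\Tht_2\|_{L^2}=O(\ep^{-1/4})$ also suffices, so this is not a gap.

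The one genuine difference is the endgame for \cref{e:conv_PPF_measure}. You propose to identify the weak-$\ast$ limit of $\curl\blds{\Tht}$ directly from the boundary-layer structure of the corrector. This does work, but note that $\Tht_2$ is built from $\Psi$ (the drift--diffusion solution of \cref{e:HEAT_like_eq}), not from the explicit heat profile $\Phi$, so the approximate-identity picture is not literally available; instead one integrates by parts against a $C^1$ test function, uses $\Tht_i|_{\Gamma}=-u^0_i|_{\Gamma}$ and $\|\Tht_i\|_{L^2}\to 0$, and passes to $C(\ol{\Omega})$ by density using the uniform $L^1$ bound \cref{e:curlThtBound}. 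The paper bypasses this by invoking the abstract criterion of \cref{C:ConvKelMeasure}: once \cref{e:VVL_PPF} and the uniform $L^1$ bound $\|\curl(\uuep-\uuo)\|_{L^\infty L^1}\le\kap_T$ are in hand, the vortex-sheet formula follows immediately. Your route is more explicit and self-contained; the paper's is shorter and reusable (it is applied verbatim in the pipe case).
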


Since the bounds on the corrector in  \ref{S:est_Tht_PPF} allow to estimate its
size, by the triangle inequality it follows that the rates of convergence in
Theorem \ref{T:PPF} are optimal in viscosity. We postpone the proof of the
theorem until Section \ref{S:proof_PPF}.

\subsection{Estimates of the corrector \texorpdfstring{$\blds{\Tht}$}{}
}\label{S:est_Tht_PPF}

In this subsection, we derive bounds in Lebesgue and Sobolev spaces for the
corrector, $\blds{\Tht} = (\Tht_1(z, t), \, \Tht_2(x,z,t), \, 0)$,
using the estimates obtained in Appendix \ref{S.App}. These bounds, in turn,
will be crucial in establishing the convergence rates of Theorem \ref{T:PPF}.

We begin with bounds for the first component of the corrector $\Tht_1$, defined
in (\ref{e:Tht_1}), which satisfies (\ref{e:Eq_Tht_1}). Then, exploiting the
pointwise estimates in Lemma \ref{L:pointwise}, satisfied by  $\Tht_{1, L}$ and
$\Tht_{1, R}$ with $\eta$  replaced by $z$ and $h-z$ respectively, gives:
\begin{equation}\label{e:RHS_Eq_Tht_PPF}
        (\text{right-hand side of } (\ref{e:Eq_Tht_1}))
            =   e.s.t..
\end{equation}
Therefore, we can apply the $L^p$ estimates in Lemma \ref{L:L2} with $\eta$
replaced by $z$ and $h-z$ respectively,  and obtain:
\begin{equation}\label{e:Lp_est_Tht_1}
\Big\| \dfrac{\pa^{m} \Tht_1}{\pa z^m} \Big\|_{L^p(\Omega)}
                                            \leq
                                                    \kap_T \, (1 +
t^{\frac{1}{2p} - \frac{m}{2}}) \, \ep^{\frac{1}{2p} - \frac{m}{2}},
                                            \quad 1 \leq p \leq \infty,
                                            \quad 0 \leq m \leq 2,
                                            \quad 0 < t < T.
\end{equation}

Similarly, for the second component of the corrector $\Tht_2$, defined in
(\ref{e:Tht_2}), we can employ the estimates in Lemma
\ref{l:Lp_est_heat_like_sol}
on $\Tht_{2, \, L}$ or $\Tht_{2, \, R}$, as they both satisfy the parabolic
system (\ref{e:HEAT_like_eq})  with
$\tau$ and $\eta$  replaced by $x$ and $z$ (or $h-z$) in the domain $(0, L)
\times (0, \infty)$ (or $(0, L) \times (-\infty, h)$) respectively, and with
\begin{equation*}
	\left\{
		\begin{array}{l}
			\spacer
				\mathcal{U}
					= \Tht_1 + u^0_1,\\
			\spacer
				G
					= - \Tht_1 \dfrac{\pa u^0_2}{\pa x},\\
				g = - u^0_2 |_{z = 0 \text{ (or } h)},
		\end{array}
	\right.
\end{equation*}
in the notation of the Appendix.
Consequently, it holds:
\begin{equation}\label{e:Lp_est_Tht_2}
\left\{
\begin{array}{l}
	\spacer
	\Big\| \dfrac{\pa^{k} \Tht_2}{\pa x^k} \Big\|_{L^{\infty}(0, T;
L^p(\Omega))}
	+
	\ep^{\frac{1}{2p} + \frac{1}{4}}
	\Big\|
		\nabla \dfrac{\pa^{k} \Tht_2}{\pa x^k}
	\Big\|_{L^{2}(0, T; L^2(\Omega))}
                                                                \leq
                                                                        \kap_T
\ep^{\frac{1}{2p}},
                                                                        \quad
                                                                        1 \leq p
\leq \infty,\\
         \spacer
	\Big\| \dfrac{\pa^{k+1} \Tht_2}{\pa x^k \pa z} \Big\|_{L^{\infty}(0, T;
L^2(\Omega))}
	+
	\ep^{\frac{1}{2}}
	\Big\|
		\nabla \dfrac{\pa^{k+1} \Tht_2}{\pa x^k \pa z}
	\Big\|_{L^{2}(0, T; L^2(\Omega))}
                                                                \leq
                                                                        \kap_T
\ep^{-\frac{1}{4}},\\
	\Big\| \dfrac{\pa^{k+1} \Tht_2}{\pa x^k \pa z} \Big\|_{L^{\infty}(0, T;
L^1(\Omega))}
	\leq \kap_T,
\end{array}
\right.
\end{equation}
with $k \geq 0$.
Above, $\kap_T$ is a constant  depending on $T$ and the other data, but not  on
$\ep$.
Note that the continuity of $L^p$ norm in a bounded interval is used for the $L^1$ bound of $\pa^k \Tht_2/ \pa x^k$.
Moreover,  one can verify that
\begin{equation}\label{e:RHS_SIMPLE_FACT}
	(\text{the second term on the right hand side of } (\ref{e:Eq_Tht_2}))
            =   e.s.t.,
\end{equation}
by performing an energy estimate on $\Psi$ (or its derivative) in
(\ref{e:HEAT_like_eq}) multiplied by any derivative of the cut-off function in
(\ref{e:sig_LR}). We omit the proof of this last estimate for the sake of
exposition.

In particular, it follows from (\ref{e:Lp_est_Tht_1}) and (\ref{e:Lp_est_Tht_2})
that
\begin{align}\label{e:curlThtBound}
    \norm{\curl \blds{\Tht}}_{L^\iny(0, T; L^1(\Omega))}
        \le \kappa_T,
\end{align}
owing to the particular form of the curl of a victor field under plane-parallel
symmetry (cf. \cref{e:omega_PPF}).

\subsection{Proof of Theorem \ref{T:PPF}}\label{S:proof_PPF}

In this subsection, we sketch the proof of Theorem \ref{T:PPF}, using the
estimates for the corrector.

To prove (\ref{e:conv_PPF})$_1$, we will perform energy estimates on the
approximation remainder $\blds{v}^\ep$. Throughout,  we will use standard
inequalities (e.g. H\"older, Cauchy-Schwarz,
Young) and the fact that $\uu^0$
is a strong solution of the Euler equation under the assumptions
\eqref{e:Data_reg_PPF} on the data.

We first derive the initial-boundary problem for $\blds{v}^\ep$ from
the EE and the corresponding equations for the corrector:
\begin{equation}\label{e:Eq_v_PPF}
        \left\{
                \begin{array}{rl}
                        \spacer \displaystyle
                                \dfrac{\pa v^\ep_1}{\pa t} - \ep \dfrac{\pa^2
v^\ep_1}{\pa z^2}
                                =
                                \ep \dfrac{\pa^2 u^0_1}{\pa z^2}
                                +
                                e.s.t.,
                        & \displaystyle
                                \text{in } \Omega \times (0, T),\\
                        \spacer \displaystyle
                                \dfrac{\pa v^\ep_2}{\pa t} - \ep \Delta v^\ep_2
                                +
                                u^\eps_1 \dfrac{\pa v^\ep_2}{\pa x}
                                + v^\ep_1 \dfrac{\pa \Tht_2}{\pa x}
                                + v^\ep_1 \dfrac{\pa u^0_2}{\pa x}
                                = \ep 
                                                \Delta u^0_2 
                                   +
                                   e.s.t.,
                        & \displaystyle
                                \text{in } \Omega \times (0, T),\\
                        \spacer \displaystyle
                                \blds{v}^\ep = 0,
                        &
                                \text{on } \Gamma, \\
                        \blds{v}^\ep|_{t=0} = 0.
                \end{array}
        \right.
\end{equation}

We multiply (\ref{e:Eq_v_PPF})$_1$ by $v^\ep_1$ and integrate over $\Omega$ to
establish the following estimate:
\begin{equation*}\label{e:engergy_v1_PPL}
        \dfrac{1}{2}\dfrac{d}{dt} \| v^\ep_1 \|_{L^2(\Omega)}^2
                            + \ep \Big\| \dfrac{\pa v^\ep_1}{\pa z}
\Big\|_{L^2(\Omega)}^2
                                    \leq \kap \ep^2 + \kap \| v^\ep_1
\|_{L^2(\Omega)}^2.
\end{equation*}
Thanks to  Gr\"onwall's inequality, we then obtain:
\begin{equation}\label{e:engergy_conv_v1_PPL}
        \| v^\ep_1 \|_{L^{\infty}(0, T; L^2(\Omega))}
        + \ep^{\frac{1}{2}}
            \Big\| \dfrac{\pa v^\ep_1}{\pa z} \Big\|_{L^{2}(0, T; L^2(\Omega))}
        \leq
            \kap_T \ep.
\end{equation}

For the second component of $\blds{v}^\ep$, $v^\ep_2$, we multiply
(\ref{e:Eq_v_PPF})$_2$ by $v^\ep_2$ and integrate over $\Omega$.
Then, using (\ref{e:Lp_est_Tht_2}), the regularity of $\uu^0$, and
(\ref{e:engergy_conv_v1_PPL}) as well, we find that
\begin{equation}\label{e:engergy_v2_PPL-1}
    \begin{array}{rl}
        \spacer
                \dfrac{1}{2}\dfrac{d}{dt} \| v^\ep_2 \|_{L^2(\Omega)}^2
                + \ep \| \nabla v^\ep_2 \|_{L^2(\Omega)}^2
                        & \hspace{-2mm}
                        \leq
                                    \kap \ep^2
                                    + \kap \| v^\ep_2 \|_{L^2(\Omega)}^2
                                    + \Big\|
                                                v^\ep_1 \Big( \dfrac{\pa
\Tht_2}{\pa x}
                                                                            +
\dfrac{\pa u^0_2}{\pa x}
                                                                    \Big)
v^\ep_2
                                        \Big\|_{L^1(\Omega)}\\
                        \spacer
                        & \hspace{-2mm}
                        \leq
                                    \kap \ep^2
                                    + \kap \| v^\ep_2 \|_{L^2(\Omega)}^2
                                    + \| v^\ep_1 \|_{L^2(\Omega)}
                                        \Big\|
                                                \dfrac{\pa \Tht_2}{\pa x}
                                                                            +
\dfrac{\pa u^0_2}{\pa x}
                                        \Big\|_{L^{\infty}(\Omega)}
                                        \| v^\ep_2 \|_{L^2(\Omega)}\\
                        & \hspace{-2mm}
                        \leq
                                    \kap_T \ep^2
                                    + \kap_T \| v^\ep_2 \|_{L^2(\Omega)}^2.
    \end{array}
\end{equation}
Above, we have used that, after integrating by parts,  the third term on the
left-hand side of (\ref{e:Eq_v_PPF})$_2$ integrates to zero, because $u^\eps_1$
does not depend on the variable $x$.

Using Gr\"onwall's inequality again, we obtain from (\ref{e:engergy_v2_PPL-1})
that
\begin{equation}\label{e:engergy_conv_v2_PPL}
        \| v^\ep_2 \|_{L^{\infty}(0, T; L^2(\Omega))}
        + \ep^{\frac{1}{2}}
            \| \nabla v^\ep_2 \|_{L^{2}(0, T; L^2(\Omega))}
        \leq
            \kap_T \ep.
\end{equation}
By combining the estimates above (\ref{e:conv_PPF})$_1$ follows.
Given the bounds on the corrector,   (\ref{e:VVL_PPF}) is a direct
consequence of (\ref{e:conv_PPF})$_{1}$.

To establish error bounds on the vorticity,  (\ref{e:conv_PPF})$_{2, 3, 4}$,
 we derived an initial-boundary value problem for
$\blds{\omega}^\ep$ and use energy estimates. In doing so, we are confronted
with the problem of deriving a usable boundary condition for
the vorticity. We follow here the approach of  Lighthill.
First, using the expression for $\blds{\omega}^\ep$ given in
(\ref{e:omega_PPF}), we derive the equations satisfied  by $\blds{\omega}^\ep$
from those for $\blds{v}^\ep$, by differentiating (\ref{e:Eq_v_PPF})$_2$ in $x$
and $z$, and (\ref{e:Eq_v_PPF})$_1$ in $z$.:
\begin{equation}\label{e:Eq_vorticity PPF}
        \left\{
                \begin{array}{l}
                        \spacer \displaystyle
                                \dfrac{\pa \omega^\ep_1}{\pa t}
                                - \ep \Delta \omega^\ep_1
                                + u^\eps_1 \, \dfrac{\pa \omega^\ep_1}{\pa x}\\
                        \spacer \qquad
                                =
                                - \ep \dfrac{\pa (\Delta u^0_2)}{\pa z}
                                +
                                \omega^\ep_2 \, \dfrac{\pa (u^0_2 + \Tht_2)}{\pa
x}
                                +
                                v^\ep_1 \, \dfrac{\pa^2 (u^0_2 + \Tht_2)}{\pa x
\pa z}
                                +
                                \dfrac{\pa u^\eps_1}{\pa z} \, \omega^\ep_3
                                +
                                e.s.t.
                        \quad
                                \text{in } \Omega \times (0, T),\\
                        \spacer \displaystyle
                                \dfrac{\pa \omega^\ep_2}{\pa t}
                                - \ep \dfrac{\pa^2 \omega^\ep_2}{\pa z^2}
                                =
                                \ep \dfrac{\pa^3 u^0_1}{\pa z^3}
                                +
                                e.s.t.
                        \quad
                                \text{in } \Omega \times (0, T),\\
                        \displaystyle
                                \dfrac{\pa \omega^\ep_3}{\pa t}
                                - \ep \Delta \omega^\ep_3
                                +
                                u^\eps_1 \, \dfrac{\pa \omega^\ep_3}{\pa x}
                                =
                                \ep \dfrac{\pa (\Delta u^0_2)}{\pa x}
                                - v^\ep_1\, \dfrac{\pa^2 (u^0_2 + \Tht_2)}{\pa
x^2}
                                +
                                e.s.t.
                        \quad
                                \text{in } \Omega \times (0, T).
                \end{array}
        \right.
\end{equation}

Next, by restricting (\ref{e:Eq_v_PPF})$_{1,2}$ on $\Gamma$, and using
(\ref{e:Eq_v_PPF})$_{3, 4}$, we find the boundary and initial conditions for
$\blds{\omega}^\ep$:
\begin{equation}\label{e:BC_IC_vorticity PPF}
        \left\{
                \begin{array}{rl}
                        \spacer \displaystyle
                                \dfrac{\pa \omega^\ep_1}{\pa z}
                                =
                                \Delta u^0_2,
                        &
                                \text{on } \Gamma, \\
                        \spacer \displaystyle
                                \dfrac{\pa \omega^\ep_2}{\pa z}
                                =
                                - \dfrac{\pa^2 u^0_1}{\pa z^2},
                        &
                                \text{on } \Gamma, \\
                        \spacer \displaystyle
                                \omega^\ep_3 = 0,
                        &
                                \text{on } \Gamma, \\
                        \blds{\omega}^\ep|_{t=0} = 0.
                \end{array}
        \right.
\end{equation}
Above,  we have used the fact that the terms of order $e.s.t.$ in
(\ref{e:Eq_v_PPF}) vanishes on $\Gamma$ (the explicit expression of these terms
is given in (\ref{e:Eq_Tht_1}) and (\ref{e:Eq_Tht_2})).

A standard energy estimate, using that $u^\ep_1$ is independent of $x$, the
regularity and decay of the corrector $\blds{\Tht}$, the regularity of the EE
solution $\uu^0$, and the bounds on $\blds{v}^\ep$, gives that
\begin{equation*}\label{e:conv_w_3_PPL-1}
    \begin{array}{rl}
        \spacer
                \dfrac{1}{2}\dfrac{d}{dt} \| \omega^\ep_3 \|_{L^2(\Omega)}^2
                + \ep \| \nabla \omega^\ep_3 \|_{L^2(\Omega)}^2
                        & \hspace{-2mm}
                        \leq
                                    \kap \ep^2
                                    + \kap \| \omega^\ep_3 \|_{L^2(\Omega)}^2
                                    + \Big\|
                                                v^\ep_1 \Big( \dfrac{\pa^2
\Tht_2}{\pa x^2}
                                                                            +
\dfrac{\pa^2 u^0_2}{\pa x^2}
                                                                    \Big)
\omega^\ep_3
                                        \Big\|_{L^1(\Omega)}\\
                        \spacer
                        & \hspace{-2mm}
                        \leq
                                    \kap \ep^2
                                    + \kap \| \omega^\ep_3 \|_{L^2(\Omega)}^2
                                    + \| v^\ep_1 \|_{L^2(\Omega)}
                                        \Big\|
                                                \dfrac{\pa \Tht_2}{\pa x}
                                                                            +
\dfrac{\pa u^0_2}{\pa x}
                                        \Big\|_{L^{\infty}(\Omega)}
                                        \| \omega^\ep_3 \|_{L^2(\Omega)}\\
                        & \hspace{-2mm}
                        \leq
                                    \kap_T \ep^2
                                    + \kap_T \| \omega^\ep_3 \|_{L^2(\Omega)}^2.
    \end{array}
\end{equation*}
Estimate (\ref{e:conv_PPF})$_4$ now follows  from (\ref{e:conv_PPF})$_{1}$ by
applying Gr\"onwall's inequality again.

We proceed similarly for $\omega^\ep_2$ and obtain:
\begin{equation}\label{e:conv_w_2_PPL-1}
    \begin{array}{rl}
        \spacer
                \dfrac{1}{2}\dfrac{d}{dt} \| \omega^\ep_2 \|_{L^2(\Omega)}^2
                + \ep \Big\| \dfrac{\pa \omega^\ep_2}{\pa z}
\Big\|_{L^2(\Omega)}^2
                        & \hspace{-2mm}
                        \leq
                                    \kap \ep^2
                                    + \kap \| \omega^\ep_2 \|_{L^2(\Omega)}^2
                                    + \ep
                                        \Big\|
                                                \dfrac{\pa^2 u^0_1}{\pa z^2} \,
                                                \omega^\ep_2
                                        \Big\|_{L^1(\Gamma)}\\
                        & \hspace{-2mm}
                        \leq
                                    \kap \ep^2
                                    + \kap \| \omega^\ep_2 \|_{L^2(\Omega)}^2
                                    + \ep
                                        \Big\|
                                                \dfrac{\pa^2 u^0_1}{\pa z^2}
                                        \Big\|_{L^2(\Gamma)}
                                        \|
                                                \omega^\ep_2
                                        \|_{L^2(\Gamma)}.
    \end{array}
\end{equation}
To apply Gr\"onwall's inequality one more time, we need to estimate the third
term
on the right-hand side of (\ref{e:conv_w_2_PPL-1}). This estimate follows in
turn from the regularity of $\uu^0$, and the trace theorem, noticing
$\omega^\ep_2$ depends on $z$ alone:
\begin{equation}\label{e:conv_w_2_PPL-2}
    \begin{array}{rl}
        \ep
        \Big\|\dfrac{\pa^2 u^0_1}{\pa z^2}\Big\|_{L^2(\Gamma)}
        \|\omega^\ep_2\|_{L^2(\Gamma)}
                & \spacer \hspace{-2mm}
                        \leq \kap \ep
                                    \|\omega^\ep_2\|_{L^2(\Omega)}^{\frac{1}{2}}

\|\omega^\ep_2\|_{H^1(\Omega)}^{\frac{1}{2}}\\
                & \spacer \hspace{-2mm}
                        \leq \kap \ep
                                    \|\omega^\ep_2\|_{L^2(\Omega)}
                                    +
                                    \kap \ep
                                    \|\omega^\ep_2\|_{L^2(\Omega)}^{\frac{1}{2}}
                                    \Big\| \dfrac{\pa \omega^\ep_2}{\pa z}
\Big\|_{L^2(\Omega)}^{\frac{1}{2}}\\
                 & \spacer \hspace{-2mm}
                        \leq \kap \ep ^2
                                    +
                                    \kap \|\omega^\ep_2\|_{L^2(\Omega)}^2
                                    +
                                    \kap \ep^{\frac{3}{4}}
                                    \|\omega^\ep_2\|_{L^2(\Omega)}
                                    +
                                    \kap \ep^{\frac{5}{4}}
                                    \Big\| \dfrac{\pa \omega^\ep_2}{\pa z}
\Big\|_{L^2(\Omega)}\\
                 & \hspace{-2mm}
                        \leq \kap \ep ^{\frac{3}{2}}
                                    +
                                    \kap \|\omega^\ep_2\|_{L^2(\Omega)}^2
                                    +
                                    \dfrac{1}{2}
                                    \ep
                                    \Big\| \dfrac{\pa \omega^\ep_2}{\pa z}
\Big\|_{L^2(\Omega)}^2.
    \end{array}
\end{equation}
Combining (\ref{e:conv_w_2_PPL-1}) and (\ref{e:conv_w_2_PPL-2}), we obtain
\begin{equation}\label{e:conv_w_2_PPL-3}
                \dfrac{d}{dt} \| \omega^\ep_2 \|_{L^2(\Omega)}^2
                + \ep \Big\| \dfrac{\pa \omega^\ep_2}{\pa z}
\Big\|_{L^2(\Omega)}^2
                        \leq
                                    \kap \ep ^{\frac{3}{2}}
                                    +
                                    \kap \|\omega^\ep_2\|_{L^2(\Omega)}^2.
\end{equation}
By Gr\"onwall's inequality again,  (\ref{e:conv_PPF})$_3$ then follows from
(\ref{e:conv_w_2_PPL-3}).

Again, similarly an energy estimate gives an {\em a priori} bound for
$\omega^\ep_1$:
\begin{equation}\label{e:conv_w_1_PPL-1}
    \begin{array}{rl}
        \spacer
                \dfrac{1}{2}\dfrac{d}{dt} \| \omega^\ep_1 \|_{L^2(\Omega)}^2
                + \ep \| \nabla \omega^\ep_1 \|_{L^2(\Omega)}^2
                        \leq
                        & \hspace{-2mm}
                                    \kap \ep^2
                                    + \kap \| \omega^\ep_1 \|_{L^2(\Omega)}^2
                                    + \ep
                                        \|
                                                \Delta u^0_2
                                        \|_{L^2(\Gamma)}
                                        \|
                                                \omega^\ep_1
                                        \|_{L^2(\Gamma)}\\
                        & \hspace{-2mm} \displaystyle \spacer
                                    + \Big\|
                                            \omega^\ep_2 \,
                                            \dfrac{\pa (u^0_2 + \Tht_2)}{\pa x}
\,
                                            \omega^\ep_1
                                        \Big\|_{L^1(\Omega)}
                                    + \Big|
                                        \int_{\Omega} v^\ep_1 \, \dfrac{\pa^2
(u^0_2 + \Tht_2)}{\pa x \pa z} \, \omega^\ep_1
                                    \Big|\\
                        & \hspace{-2mm} \displaystyle
                                    + \Big\|
                                        \dfrac{\pa u^\eps_1}{\pa z}
\,\omega^\ep_3 \, \omega^\ep_1
                                    \Big\|_{L^1(\Omega)}
                                    .
    \end{array}
\end{equation}
We need to bound the last three terms on the right-hand side of the above
expression.

 We can estimate the third term on the right-hand side,
by using again the regularity of $\uu^0$, and the
trace theorem:
\begin{equation}\label{e:conv_w_1_PPL-2}
    \begin{array}{rl}
        \ep
        \|\Delta u^0_2\|_{L^2(\Gamma)}
        \|\omega^\ep_1\|_{L^2(\Gamma)}
                & \spacer \hspace{-2mm}
                        \leq \kap \ep
                                    \|\omega^\ep_1\|_{L^2(\Omega)}^{\frac{1}{2}}

\|\omega^\ep_1\|_{H^1(\Omega)}^{\frac{1}{2}}\\
                & \spacer \hspace{-2mm}
                        \leq \kap \ep
                                    \|\omega^\ep_1\|_{L^2(\Omega)}
                                    +
                                    \kap \ep
                                    \|\omega^\ep_1\|_{L^2(\Omega)}^{\frac{1}{2}}
                                    \| \nabla \omega^\ep_1
\|_{L^2(\Omega)}^{\frac{1}{2}}\\
                 & \spacer \hspace{-2mm}
                        \leq \kap \ep ^2
                                    +
                                    \kap \|\omega^\ep_1\|_{L^2(\Omega)}^2
                                    +
                                    \kap \ep^{\frac{3}{4}}
                                    \|\omega^\ep_1\|_{L^2(\Omega)}
                                    +
                                    \kap \ep^{\frac{5}{4}}
                                    \| \nabla \omega^\ep_1 \|_{L^2(\Omega)}\\
                 & \hspace{-2mm}
                        \leq \kap \ep ^{\frac{3}{2}}
                                    +
                                    \kap \|\omega^\ep_1\|_{L^2(\Omega)}^2
                                    +
                                    \dfrac{1}{4}
                                    \ep
                                    \| \nabla \omega^\ep_1 \|_{L^2(\Omega)}^2.
    \end{array}
\end{equation}

Using the regularity of $\uu^0$, and the bounds for the corrector and
$\omega^\ep_2$,
we estimate the fourth term
on the right-hand side of (\ref{e:conv_w_1_PPL-1}) as follows:
\begin{equation}\label{e:conv_w_1_PPL-3}
    \begin{array}{rl}
            \Big\|
                \omega^\ep_2 \,
                \dfrac{\pa (u^0_2 + \Tht_2)}{\pa x} \,
                \omega^\ep_1
            \Big\|_{L^1(\Omega)}
                    & \spacer \hspace{-2mm}
                        \leq
                                \|
                                    \omega^\ep_2
                                \|_{L^2(\Omega)}
                                \Big\|
                                    \dfrac{\pa (u^0_2 + \Tht_2)}{\pa x}
                                \Big\|_{L^{\infty}(\Omega)}
                                \|
                                    \omega^\ep_1
                                \|_{L^2(\Omega)}\\
                    & \spacer \hspace{-2mm}
                        \leq
                                \kap_T \ep^{\frac{3}{4}}
                                \|
                                    \omega^\ep_1
                                \|_{L^2(\Omega)}\\
                    &  \hspace{-2mm}
                        \leq
                                \kap_T \ep^{\frac{3}{2}}
                                +
                                \kap_T\|
                                    \omega^\ep_1
                                \|_{L^2(\Omega)}^2.
    \end{array}
\end{equation}

Next, we tackle the fifth term on the right-hand side of
(\ref{e:conv_w_1_PPL-1}).
Since $v^\ep_1 = 0$ on $\Gamma$, by integrating by parts we can
write
\begin{equation}\label{e:conv_w_1_PPL-4}
            \Big|
            \int_{\Omega} v^\ep_1 \, \dfrac{\pa^2 (u^0_2 + \Tht_2)}{\pa x \pa z}
\, \omega^\ep_1 \, d \blds{x}
            \Big|
                    \leq
            \Big\|
                    \dfrac{\pa v^\ep_1}{\pa z} \,
                    \dfrac{\pa (u^0_2 + \Tht_2)}{\pa x} \,
                    \omega^\ep_1
            \Big\|_{L^1(\Omega)}
            +
            \Big\|
                    v^\ep_1 \,
                    \dfrac{\pa (u^0_2 + \Tht_2)}{\pa x} \,
                    \dfrac{\pa \omega^\ep_1}{\pa z}
            \Big\|_{L^1(\Omega)}.
\end{equation}
Then, we can proceed as for the fourth term, and obtain:
\begin{equation}\label{e:conv_w_1_PPL-5}
    \begin{array}{l}
            \spacer \displaystyle
            \Big|
            \int_{\Omega} v^\ep_1 \, \dfrac{\pa^2 (u^0_2 + \Tht_2)}{\pa x \pa z}
\, \omega^\ep_1 \, d \blds{x}
            \Big|\\
                    \spacer
                    \leq
                            \Big\|
                                \dfrac{\pa v^\ep_1}{\pa z}
                            \Big\|_{L^2(\Omega)}
                            \Big\|
                                \dfrac{\pa (u^0_2 + \Tht_2)}{\pa x}
                            \Big\|_{L^{\infty}(\Omega)}
                            \|
                                \omega^\ep_1
                            \|_{L^2(\Omega)}
                            +
                            \|
                                v^\ep_1
                            \|_{L^2(\Omega)}
                            \Big\|
                                \dfrac{\pa (u^0_2 + \Tht_2)}{\pa x}
                            \Big\|_{L^{\infty}(\Omega)}
                            \Big\|
                                \dfrac{\pa \omega^\ep_1}{\pa z}
                            \Big\|_{L^2(\Omega)}
                            \\
                    \spacer
                    \leq
                            \kap
                            \Big\|
                                \dfrac{\pa v^\ep_1}{\pa z}
                            \Big\|_{L^2(\Omega)}
                            \|
                                \omega^\ep_1
                            \|_{L^2(\Omega)}
                            +
                            \kap_T
                            \ep
                            \Big\|
                                \dfrac{\pa \omega^\ep_1}{\pa z}
                            \Big\|_{L^2(\Omega)}
                            \\
                    \leq
                            \kap
                            \Big\|
                                \dfrac{\pa v^\ep_1}{\pa z}
                            \Big\|_{L^2(\Omega)}^2
                            +
                            \kap
                            \|
                                \omega^\ep_1
                            \|_{L^2(\Omega)}^2
                            +
                            \kap_T \ep
                            +
                            \dfrac{1}{4}
                            \ep
                            \|
                                \nabla \omega^\ep_1
                            \|^2_{L^2(\Omega)}.
    \end{array}
\end{equation}

Finally, we estimate the last term on the right-hand side of
(\ref{e:conv_w_1_PPL-1}) as
follows, once again utilizing the bounds for $\omega^\ep_3$ and for the
corrector $\Theta_1$:
\begin{equation}\label{e:extra_PPF 1}
    \begin{array}{rl}
        \Big\|
            \dfrac{\pa u^\eps_1}{\pa z} \,\omega^\ep_3 \, \omega^\ep_1
        \Big\|_{L^1(\Omega)}
            & \spacer \hspace{-2mm}
                \leq
                    \|
                         \omega^\ep_2 \,\omega^\ep_3 \, \omega^\ep_1
                    \|_{L^1(\Omega)}
                    +
                    \Big\|
                        \dfrac{\pa u^0_1}{\pa z} \,\omega^\ep_3 \, \omega^\ep_1
                    \Big\|_{L^1(\Omega)}
                    +
                    \Big\|
                        \dfrac{\pa \Tht_1}{\pa z} \,\omega^\ep_3 \, \omega^\ep_1
                    \Big\|_{L^1(\Omega)}\\
            & \spacer \hspace{-2mm}
                \leq
                    \| \omega^\ep_2 \|_{L^{\infty}(\Omega)}
                    \| \omega^\ep_3 \|_{L^2(\Omega)}
                    \| \omega^\ep_1 \|_{L^2(\Omega)}
                    +
                    \Big\| \dfrac{\pa u^0_1}{\pa z} \Big\|_{L^{\infty}(\Omega)}
                    \| \omega^\ep_3 \|_{L^2(\Omega)}
                    \| \omega^\ep_1 \|_{L^2(\Omega)}\\
            & \spacer \quad
                    +
                    \Big\| \dfrac{\pa \Tht_1}{\pa z} \Big\|_{L^{\infty}(\Omega)}
                    \| \omega^\ep_3 \|_{L^2(\Omega)}
                    \| \omega^\ep_1 \|_{L^2(\Omega)}\\
            & \spacer \hspace{-2mm}
                \leq
                    \kap_T \big(
                                    \ep
                                    \| \omega^\ep_2 \|_{L^{\infty}(\Omega)}
                                    +
                                    \ep
                                    +
                                    (1 + t^{- \frac{1}{2}}) \ep^{\frac{1}{2}}
                                \big)
                                \| \omega^\ep_1 \|_{L^2(\Omega)}\\
            & \spacer \hspace{-2mm}
                \leq
                    \kap_T \ep \| \omega^\ep_2 \|_{L^{\infty}(\Omega)}^2
                    +
                    \kap_T (1 + t^{- \frac{1}{2}}) \ep
                    +
                    \kap_T (1 + t^{- \frac{1}{2}}) \| \omega^\ep_1
\|_{L^2(\Omega)}^2.
    \end{array}
\end{equation}

Combining all the estimates above into (\ref{e:conv_w_1_PPL-1}),
 we deduce that
\begin{equation}\label{e:conv_w_1_PPL-6}
\begin{array}{l}
            \spacer
                \dfrac{1}{2}\dfrac{d}{dt} \| \omega^\ep_1 \|_{L^2(\Omega)}^2
                + \ep \| \nabla \omega^\ep_1 \|_{L^2(\Omega)}^2\\
                \qquad \quad
                        \leq
                                    \kap_T \ep \, \big(1 + t^{-\frac{1}{2}} +
\| \omega^\ep_2 \|_{L^{\infty}(\Omega)}^2\big)
                                    +
                                    \kap
                                    \Big\|
                                        \dfrac{\pa v^\ep_1}{\pa z}
                                    \Big\|_{L^2(\Omega)}^2
                                    +
                                    \kap_T (1 + t^{-\frac{1}{2}}) \|
\omega^\ep_1 \|_{L^2(\Omega)}^2.
\end{array}
\end{equation}
We need to estimate the
$L^\infty$ norm of $\omega^\ep_2$. This estimate, in turn, follows by applying
the one-dimensional Agmon's inequality and the $L^2$ bounds on $\omega^\ep_2$,
observing that  $\omega^\ep_2$ is a function of $z$ only, which
represents the distance to the boundary:
\begin{equation}\label{e:extra_PPF 2}
            \int_0^T \| \omega^\ep_2 \|_{L^{\infty}(\Omega)}^2 \, dt
            \leq
            \kap
            \int_0^T \| \omega^\ep_2 \|_{L^{2}(\Omega)}
                            \Big\| \dfrac{ \pa \omega^\ep_2}{\pa z}
\Big\|_{L^{2}(\Omega)} \, dt
            \leq
            \kap_T \ep.
\end{equation}
Estimate (\ref{e:conv_PPF})$_2$ now follows
by an applycation of Gr\"onwall's inequality one more time, with the integrating
factor $\exp(-\kap_T t - 2 \kap_T t^{1/2})$.

Since the approximate NS solution $\blds{v}^\ep$ vanishes on the boundary of
$\Omega$, $\Gamma$, we have that:
\[
      \|\blds{u}^\ep\|_{H^k} \leq C_k \, \|\blds{\omega}^\ep\|_{H^{k-1}},
\qquad k\geq 0.
\]
Therefore, estimate \cref{e:conv_PPF}$_6$ on the velocity follows from the
corresponding bounds on the vorticity.

Finally, \cref{e:conv_PPF}$_5$ and \cref{e:curlThtBound}  imply  that
$\norm{\curl(\uu^\eps - \uu^0)}_{L^\iny(0, T; L^1(\Omega))} \le \kappa_T$.
Weak convergence of the vorticity in \cref{e:conv_PPF_measure} can then be
established using
\cref{C:ConvKelMeasure} in the Appendix.

$\Box$

\section{Parallel pipe flows}\label{S:IPF}


In this section, we turn to the most complex class of symmetric flows, the
so-called {\em parallel pipe flows} in an infinite, straight pipe.

Since our focus is on the behavior of the flow near walls,  we consider the case
of a pipe with annular cross section:
$$
\Omega := \{ (x, y, z) \in \mathbb{R}^3 | \, r_L < y^2 + z^2 < r_R \},
\quad
0 < r_L <  r_R.
$$
The case of a pipe with circular cross section is technically more difficult,
because the estimates in cylindrical coordinates are weaker at the pipe axis.
It can be treated, as in \cite{HMNW11}, by a two-step localization, near the
boundary utilizing cylindrical coordinates, near the axis using Cartesian
coordinates, or by applying a suitable form of Hardy's inequality,
but we will not address this case here.
We also periodize the channel in the direction of its axis, and denote the
period by
$L$. As in the case of the channel geometry, together with symmetry, periodicity
ensures uniqueness for the solutions of
the fluid equations, in particular by ensuring that the only steady
pressure-driven flow is the trivial flow (see \cite{MT11} for a more detailed
discussion of this point).

We introduce cylindrical coordinates, $\blds{\xi} = (\phi, x, r)$, with
associated orthonormal frame $\{\e_\phi, \e_x, \e_r\}$, so that the domain
$\Omega$ corresponds in these coordinates to the parallepiped
\begin{equation*}\label{e:domina in cyl_co}
    \tilde\Omega
            = \{ \blds{\xi} = (\phi,x,r) | \, 0
\leq \phi < 2\pi, \,0<x<L, \, r_L \leq r < r_R \},
\end{equation*}
hence the notation of $r_L$ and $r_R$ for the inner and outer radii of the
pipe. We employ this notation to emphasize comparison with the channel case.

We will call any three-dimensional flow a {\em parallel pipe flow} (PPF) if the
velocity field is a vector field of the form:
\begin{equation}\label{e:IPF_general}
        {\bf F} =
                        F_{\phi} (r) \e_{\phi}
                        +
                        F_x (\phi, r) \e_x.
\end{equation}
In the special case of a planar PPF, i.e., when $F_x\equiv 0$, the flow reduces
to a linear, 2D circularly symmetric flow in any cross-sections of the pipe.

Standard calculus equalities (see e.g \cite{Bat99}) imply that
the gradient of each component of ${\bf F}$, $F_{\phi}$ or $F_x$, can be written
as:\begin{equation}\label{e:IPF_grad_comp}
        \nabla F_{\phi}
            = \dfrac{\pa F_{\phi}}{\pa r} \e_r,
\qquad
        \nabla F_x
            = \dfrac{1}{r} \dfrac{\pa F_x}{\pa \phi} \e_{\phi}
                +
                \dfrac{\pa F_x}{\pa r} \e_r.
\end{equation}
The derivative of ${\bf F}$ in the
direction of the vector
$\n = n_{\phi} \e_{\phi} + n_x \e_x + n_r \e_r$ is then given by:
\begin{equation}\label{e:IPF_grad}
\n \cdot \nabla {\bf F}
        =
            \big(
                \n \cdot \nabla F_{\phi}
            \big) \e_{\phi}
            +
            \big(
                \n \cdot \nabla F_x
            \big) \e_x
            +
            \Big(
                - \dfrac{1}{r} n_{\phi} F_{\phi}
            \Big) \e_r.
\end{equation}
It is easy to verify that the divergence of ${\bf F}$ is zero:
\begin{equation}\label{e:divergence_IPF}
        \dv {\bf F} = 0,
\end{equation}
and that the curl of ${\bf F}$ takes the form:
\begin{equation}\label{e:curl_IPF}
        \curl {\bf F}
                =
                    \Big(
                            - \dfrac{\pa F_x}{\pa r}
                    \Big) \e_{\phi}
                    +
                    \Big(
                            \dfrac{1}{r} \dfrac{\pa (r F_{\phi})}{\pa r}
                    \Big) \e_x
                    +
                    \Big(
                            \dfrac{1}{r} \dfrac{\pa F_x}{\pa \phi}
                    \Big) \e_r.
\end{equation}

It is also straightforward, though tedious, to verify that
the (vector) Laplacian of ${\bf F}$ can be written as:
\begin{equation}\label{e:Laplacian_IPF}
\Delta {\bf F}
        =
            \Big(
                    \Delta F_{\phi}
                    -
                    \dfrac{1}{r^2} F_{\phi}
            \Big) \e_{\phi}
            +
            \big(
                    \Delta F_x
            \big) \e_{x},
\end{equation}
where Laplacian of each
component, $F_{\phi}$ or $F_x$, is given by:
\begin{equation}\label{e:Laplacian_IPF_comp}
        \Delta F_{\phi}
                = \dfrac{1}{r}
                    \dfrac{\pa}{\pa r}\Big(
                                                    r \dfrac{\pa F_{\phi}}{\pa
r}
                                                \Big),
\qquad
        \Delta F_x
                = \dfrac{1}{r}
                    \dfrac{\pa}{\pa r}\Big(
                                                    r \dfrac{\pa F_x}{\pa r}
                                                \Big)
                    +
                    \dfrac{1}{r^2} \dfrac{\pa^2 F_x}{\pa \phi^2}.
\end{equation}
These formulas will be used in deriving the equations satisfied by the NS and
EE velocity fields and vorticites.

As in \cite{HMNW11,MT11}, we  will consider solutions of NSE and EE under this
symmetry; that is, we assume the following form for the velocities, $\uu^\ep$
and $\uu^0$,  and for the associated pressures, $p^\ep$ and $p^0$, respectively:
\begin{subequations}
\begin{equation}\label{e:C velocity_NSE and p}
\uu^\eps =
                                    u^\eps_{\phi} (r, t) \e_{\phi}
                                    + u^\eps_x (\phi, r, t) \e_x,
\qquad
p^\ep = p^\ep(r, t),
\end{equation}
\begin{equation}\label{e:C velocity_EE and P}
\uu^0 =
                                    u^0_{\phi} (r, t) \e_{\phi}
                                    + u^0_x (\phi, r, t) \e_x,\\
\qquad
p^0 = p^0(r, t).
\end{equation}
\end{subequations}
We remark that any PPF is automatically incompressible,
and that the components of its velocity field in the cross-section of the pipe
have circular symmetry, and evolve independently from the third.

It can be shown (see \cite{HMNW11,MT11} for a proof), that  the
parallel pipe symmetry is preserved under both the NSE and EE evolution,
provided that the data, the forcing $\f$ and initial velocity $\uu^0$,
are regular enough and have the same symmetry,
i.\,e.,
\begin{equation*}\label{e:C data}
            \f = f_{\phi} (r, t) \e_{\phi}
                                    + f_x (\phi, r, t) \e_x,
\qquad
            \uu_0 = u_{0, \, \phi} (r) \e_{\phi}
                                    + u_{0, \, x} (\phi, r) \e_x.
\end{equation*}
Then,
the NSE (\ref{e:NSE}) become the weakly coupled system:
\begin{equation}\label{e:NSE_C}
\left\{\begin{array}{rl}
                            \spacer
                                        \dfrac{\pa u^\eps_{\phi}}{\pa t}
                                        - \ep \Delta u^\eps_{\phi}
                                        + \ep \dfrac{1}{r^2} u^\eps_{\phi}
                                        = f_{\phi},
                                        &
                                        \text{in } \Omega \times(0,T),\\
                             \spacer
                                        \dfrac{\pa u^\eps_{x}}{\pa t}
                                        - \ep \Delta u^\eps_x
                                        + \dfrac{1}{r} u^\eps_{\phi} \dfrac{\pa
u^\eps_x}{\pa \phi}
                                        = f_{x},
                                        &
                                        \text{in } \Omega \times(0,T),\\
                             \spacer
                                        - \dfrac{1}{r} (u^\eps_{\phi})^2
                                        + \dfrac{\pa p^\ep}{\pa r}
                                        = 0,
                                        &
                                        \text{in } \Omega \times(0,T),\\
                              \spacer
                                        u^\eps_x \text{ is periodic}
                                        &
                                        \text{in $\phi$ with period $2 \pi$},\\
                              \spacer
                                        u^\eps_i
                                        = 0, \text{ } i=\phi, x,
                                        &
                                        \text{on } \Gamma, \text{ i.e., at }
r=r_L, r_R,\\
                                        u^\eps_i \big|_{t=0}
                                        = u_{0, \, i}, \text{ } i=\phi, x,
                                        &
                                        \text{in } \Omega ,
        \end{array}\right.
\end{equation}
and, similarly, the EE \eqref{e:EE} become the weakly coupled system:
\begin{equation}\label{e:EE_C}
\left\{\begin{array}{rl}
                            \spacer
                                        \dfrac{\pa u^0_{\phi}}{\pa t}
                                        = f_{\phi},
                                        &
                                        \text{in } \Omega \times(0,T),\\
                             \spacer
                                        \dfrac{\pa u^0_{x}}{\pa t}
                                        + \dfrac{1}{r} u^0_{\phi} \dfrac{\pa
u^0_x}{\pa \phi}
                                        = f_{x},
                                        &
                                        \text{in } \Omega \times(0,T),\\
                              \spacer
                                        - \dfrac{1}{r} (u^0_{\phi})^2
                                        + \dfrac{\pa p^0}{\pa r}
                                        = 0,
                                        &
                                        \text{in } \Omega \times(0,T),\\
                             \spacer
                                        u^0_x \text{ is periodic}
                                        &
                                        \text{in $\phi$ with period $2 \pi$},\\
                                        u^0_i \big|_{t=0}
                                       = u_{0, \, i}, \text{ } i=\phi, x,
                                        &
                                        \text{in } \Omega .
        \end{array}\right.
\end{equation}
Differently than for parallel channel flows, the pressure cannot be taken
constant here, but the pressure can be recovered up to a constant from
knowledge of $u^\phi$.

As in the case of a channel,  we assume that the data, $\uu_0$ and $\f$, are
sufficiently regular, but ill-prepared in the sense that
\begin{equation}\label{e:Data_reg_IPF}
        \uu_0 \in H \cap H^k(\Omega),
            \quad
        \f \in C(0, T ; H \cap H^k(\Omega)),
            \quad
        \text{for a sufficiently large } k \geq0,
\end{equation}
but $\uu_0$ is not assumed to vanish at the boundary nor $\f$  is
assumed necessarily compatible with $\uu_0$ at $t=0$. Again, this choice leads
to the formation of an initial layer for NSE. Since the cross-sectional
component of the inviscid solution is time independent, this initial layer
affects the zero-viscosity limit, in particular vorticity
production at the boundary in the limit.

Under the assumption (\ref{e:Data_reg_IPF}) one can verify that there exists a
unique global strong  solution, which is also classical for positive time.
Similarly, the limit problem (\ref{e:EE_C}) possesses a unique global, strong
solution.
%

Since the tangential component of $\uu^0$ need not vanish on $\Gamma$,  boundary
layers are expected to form on both components of the boundary, the inner and
outer cylinders of the pipe.


In what follows, to highlight the effect of the curvature, we will leave
the explicit dependence on $r$ in many expressions, even though $r$ will be a
bounded function, bounded away from zero. Constants may depend on the geometry
of the pipe through the pipe inner and outer radii, $r_L$, $r_R$.

\subsection{
Viscous approximation and convergence
result}\label{S:IPF_THM}
We will make the following ansatz for the approximate NS solution $\uu^\eps$:
\begin{equation}\label{e:assympt exp_IPF}
        \uu^\eps
                \cong
                    \uu^0 + \blds{\Tht},
\end{equation}
where $\blds{\Tht}$ is a corrector to the inviscid solution $\uu^0$. As for the
channel case, this corrector depends on $\ep$, but for ease of notation, we do
not explicitly show it.

We assume that the corrector satisfies the same symmetry as the NSE and EE
solutions, that is:
\begin{equation}\label{e:Tht_IPF}
        \blds{\Tht}
                =
                    \Tht_{\phi} (r, t) \e_{\phi}
                    +
                    \Tht_{x} (\phi, r, t) \e_x.
\end{equation}

Effective equations for the corrector lead to the \textit{weakly
coupled parabolic system} below
\cite{HMNW11}:
\begin{equation}\label{e:Eq_HMNW11}
        \left\{
                \begin{array}{rl}
                        \spacer \displaystyle
                                \dfrac{\pa \Tht_{\phi}}{\pa t} - \ep
\dfrac{\pa^2 \Tht_{\phi}}{\pa r^2} = 0
                        & \displaystyle
                                \text{in } \Omega \times (0, T),\\
                        \spacer \displaystyle
                                \dfrac{\pa \Tht_x}{\pa t}
                                - \ep \dfrac{\pa^2 \Tht_x}{\pa r^2}
                                + \Tht_{\phi} \dfrac{\pa \Tht_x}{\pa \phi}
                                + u^0_\phi \big|_{\Gamma} \dfrac{\pa \Tht_x}{\pa
\phi}
                                + \dfrac{\pa u^0_x}{\pa \phi}\Big|_{\Gamma}
\Tht_{\phi}
                                = 0
                        & \displaystyle
                                \text{in } \Omega \times (0, T),\\
                        \spacer \displaystyle
                                \Tht_i = - u^0_i
                        &
                                \text{on } \Gamma, \text{ } i= \phi, x,\\
                        \blds{\Tht}|_{t=0} = 0.
                \end{array}
        \right.
\end{equation}
Again, for well-prepared data, these equations allow directly to establish
good regularity and decay propertyes for the corrector, which in turn allow to
study the zero-viscosity limit.

To handle the case of ill-prepared initial data,
we follow the methodology introduced in Section \ref{S:PPF}.
We first define $\Tht_\phi$ as an explicit approximate solution of
the 1D heat equation (\ref{e:Eq_HMNW11})$_1$ on a segment with the proposed
boundary and initial conditions.
Then, we define $\Tht_x$ as a solution of
a drift-diffusion equation similar to (\ref{e:Eq_HMNW11})$_2$ with the proper
boundary and initial conditions.
The analysis proceeds parallel to that of channel flows. However, the curvature
of the boundary has an effect here on the estimates for the approximate
viscous solution.

As in the case of the channel, the corrector $\Tht_\phi$ will be constructed
from the exact solutions of the following one-dimensional heat equations on a
half line,  with boundary and initial data as in (\ref{e:Eq_HMNW11}):
\begin{equation}\label{e:Tht_app_IPF}
\left\{
	\begin{array}{l}
		\displaystyle \spacer
		\Tht_{\phi, L} (r, t)
                            =
                                -2 \, u^0_\phi (r_L, 0 ) \, \erfc\Big(
\dfrac{r-r_L}{\sqrt{2 \ep t}} \Big)
                               -2 \int_0^{t} \dfrac{\pa \, u^0_\phi}{\pa t}
(r_L, s) \, \erfc \Big( \dfrac{r-r_L}{\sqrt{2 \ep (t-s)}} \Big) \, ds,\\
                 \displaystyle
                 \Tht_{\phi, R} (r, t)
                            =
                                -2 \, u^0_\phi (r_R, 0 ) \, \erfc\Big(
\dfrac{r_R-r}{\sqrt{2 \ep t}} \Big)
                               -2 \int_0^{t} \dfrac{\pa \, u^0_\phi}{\pa t}
(r_R, s) \, \erfc \Big( \dfrac{r_R-r}{\sqrt{2 \ep (t-s)}} \Big) \, ds.
        \end{array}
\right.
\end{equation}
Again, these heat solutions satisfy the pointwise and $L^p$ estimates for $\Phi$
in Lemmas \ref{L:pointwise} and \ref{L:L2} in the Appendix, with $\eta$ replaced
respectively by $r-r_L$ and $r_R-r$.

As before, we introduce a smooth cut-off function so as to localize the correctors  near the inner and outer boundaries
$r=r_L, r_R$, in order to enforce the boundary conditions exactly. Let
\begin{equation}\label{e:sig_IPF}
\begin{array}{cc}
                \sig (r) =
                        \left\{
                        \begin{array}{l}
                                \spacer
                                        1, \text{ } r_L \leq r \leq r_L + a_0,\\
                                        0, \text{ } r  \geq r_L + 2 a_0,
                        \end{array}
                        \right.
\end{array}
\end{equation}
for a fixed $ 0 < a_0 \ll r_R - r_L$.

Finally,  we define $\Tht_\phi$ as follows:
\begin{equation}\label{e:Tht_phi}
        \Tht_\phi (r, t)
                = \sig (r) \, \Tht_{\phi, L} (r, t)
                +
                \sig (r_R-r) \, \Tht_{\phi, R} (r, t).
\end{equation}
The equations for the corrector become then:
\begin{equation}\label{e:Eq_Tht_phi}
        \left\{
                \begin{array}{rl}
                        \spacer \displaystyle
                                \dfrac{\pa \Tht_\phi}{\pa t} - \ep \dfrac{\pa^2
\Tht_\phi}{\pa r^2}
                        & \hspace{-2mm}
                                =
                                    -\ep \Big\{
                                                2 \sig^{\pri}(r) \,
\dfrac{\pa \Tht_{\phi, \, L}}{\pa r}
                                                + \sig^{\pri \pri}(r) \,
\Tht_{\phi, \, L}
                                                - 2 \sig^{\pri} (r_R-r)\,
\dfrac{\pa \Tht_{\phi, \, R}}{\pa r}
                                                + \sig^{\pri \pri}(r_R-r) \,
\Tht_{\phi, \, R}
                                            \Big\}
                                \quad
                                \text{in } \Omega,\\
                        \spacer \displaystyle
                                \Tht_\phi
                        & \hspace{-2mm}
                                = - u^0_\phi
                                \quad
                                \text{on } \Gamma, \\
                        \Tht_\phi|_{t=0}
                        & \hspace{-2mm}
                                = 0.
                \end{array}
        \right.
\end{equation}
The right-hand side of (\ref{e:Eq_Tht_phi})$_1$ is an $e.s.t.$  when considered
as a term in (\ref{e:RHS_Eq_Tht_IPF}) later in the section.

Having  $\Tht_{\phi}$ at hand, we can define the last component of the corrector
$\Tht_x$ as follows.  We first utilize the solutions, $\Tht_{x, \, L}$ and
$\Tht_{x, \, R}$, to the following parabolic systems in periodic half spaces:
\begin{equation}\label{e:Eq_Tht_x_L}
        \left\{
                \begin{array}{rl}
                        \spacer \displaystyle
                                \dfrac{\pa \Tht_{x, \, L}}{\pa t} - \ep
\dfrac{\pa^2 \Tht_{x, \, L}}{\pa r^2}
                                - \ep \dfrac{\pa^2 \Tht_{x, \, L}}{\pa \phi^2}
                                +
                                \dfrac{1}{r}
                                \big( \Tht_{\phi} + u^0_\phi \big)
                                		\dfrac{\pa \Tht_{x, \, L}}{\pa
\phi}
                                =
                                - \dfrac{1}{r} \Tht_\phi \dfrac{\pa u^0_x}{\pa
\phi},
                        & \displaystyle
                                (0, 2 \pi) \times (r_L, \infty),\\
                        \spacer
			\Tht_{x, \, L} \text{ is periodic}
			& \hspace{-2mm} \text{ in } \phi,\\
                        \spacer \displaystyle
                                \Tht_{x, \, L} = - u^0_x,
                        &
                                r = r_L, \\
                        \spacer
			\Tht_{x, \, L} \rightarrow 0
			& \hspace{-2mm} \text{ as } r \rightarrow \infty,\\
                        \Tht_{x, \, L}|_{t=0} = 0,
                \end{array}
        \right.
\end{equation}
and
\begin{equation}\label{e:Eq_Tht_x_R}
        \left\{
                \begin{array}{rl}
                        \spacer \displaystyle
                                \dfrac{\pa \Tht_{x, \, R}}{\pa t} - \ep
\dfrac{\pa^2 \Tht_{x, \, R}}{\pa r^2}
                                - \ep \dfrac{\pa^2 \Tht_{x, \, R}}{\pa \phi^2}
                                +
                                \dfrac{1}{r}
                                \big( \Tht_{\phi} + u^0_\phi \big)
                                		\dfrac{\pa \Tht_{x, \,R}}{\pa
\phi}
                                =
                                - \dfrac{1}{r} \Tht_\phi \dfrac{\pa u^0_x}{\pa
\phi},
                        & \displaystyle
                                (0, 2 \pi) \times (-\infty, r_R),\\
                        \spacer
			\Tht_{x, \, R} \text{ is periodic}
			& \hspace{-2mm} \text{ in } \phi,\\
                        \spacer \displaystyle
                                \Tht_{x, \, R} = - u^0_x,
                        &
                                r = r_R, \\
                        \spacer
			\Tht_{x, \, R} \rightarrow 0
			& \hspace{-2mm} \text{ as } r \rightarrow -\infty,\\
                        \Tht_{x, \, R}|_{t=0} = 0,
                \end{array}
        \right.
\end{equation}
Then, using the cut-off function in (\ref{e:sig_IPF}), we define the
second component of the corrector in the form,
\begin{equation}\label{e:Tht_x}
        \Tht_x(\phi, r, t)
                = \sig(r) \, \Tht_{x, \, L} (\phi, r, t)
                    + \sig(r_R-r) \, \Tht_{x, \, R} (\phi, r, t),
\end{equation}
which then satisfies:
\begin{equation}\label{e:Eq_Tht_x}
        \left\{
                \begin{array}{l}
                        \spacer \displaystyle
                                \dfrac{\pa \Tht_{x}}{\pa t} - \ep \dfrac{\pa^2
\Tht_{x}}{\pa r^2}
                                - \ep \dfrac{\pa^2 \Tht_{x}}{\pa \phi^2}
                                +
                                \dfrac{1}{r}
                                \big( \Tht_{\phi} + u^0_\phi \big)
                                		\dfrac{\pa \Tht_{x}}{\pa \phi}\\
                        \spacer \quad 
                        		=
				- \dfrac{1}{r} \Tht_\phi \dfrac{\pa u^0_x}{\pa
\phi}
				-
				\ep \Big\{
                                                2 \sig^{\pri}(r) \,
\dfrac{\pa \Tht_{x, \, L}}{\pa r}
                                                + \sig^{\pri \pri}(r) \,
\Tht_{x, \, L}
                                                - 2 \sig^{\pri}(r_R-r) \,
\dfrac{\pa \Tht_{x, \, R}}{\pa r}
                                                + \sig^{\pri \pri}(r_R-r) \,
\Tht_{x, \, R}
                                            \Big\}
                        \quad
                                \text{in } \Omega,\\
                        \spacer \displaystyle
                                \Tht_x = - u^0_x
                        \quad
                                \text{on } \Gamma, \\
                        \Tht_x|_{t=0} = 0,
                \end{array}
        \right.
\end{equation}
where the second term on the right hand side of (\ref{e:Eq_Tht_x})$_1$ is an
$e.s.t.$ as appearing in (\ref{e:RHS_SIMPLE_FACT_IPF}).

Some needed $L^p$-estimates on the corrector in terms of the viscosity $\eps$
are stated and proved separately in \cref{S:est_Tht_IPF} below.

We next turn to estimating the error between the true NS solution and the
approximate solution. We define again the error term as:
\begin{equation}\label{e:corrected velocity IPF}
        \blds{v}^\ep
                =
                    v^\ep_{\phi} (r, t) \e_{\phi}
                    +
                    v^\ep_x (\phi, r, t) \e_x
                := \uu^\eps - \uu^0 -\blds{\Tht},
\end{equation}
and, using (\ref{e:curl_IPF}), we compute its curl:
\begin{equation}\label{e:omega_IPF}
\begin{array}{rl}
\blds{\omega}^\ep
        & \spacer \hspace{-2mm}
            =
                \omega^\ep_{\phi} (\phi, r, t) \e_{\phi}
                +
                \omega^\ep_x (r, t) \e_{x}
                +
                \omega^\ep_{r} (\phi, r, t) \e_{r}\\
        & \spacer \hspace{-2mm}
            : = \curl \uu^\eps \\
        & \hspace{-2mm}
            =
                -\dfrac{\pa v^\ep_x}{\pa r} \e_{\phi}
                + \dfrac{1}{r} \dfrac{\pa (r v^\ep_{\phi})}{\pa r} \e_{x}
                + \dfrac{1}{r} \dfrac{\pa v^\ep_x}{\pa \phi} \e_r.
\end{array}
\end{equation}

We give some convergence results for PPF below. These are proved in
\cref{S:proof_IPF}.

\begin{theorem}\label{T:IPF}
        Under the regularity assumptions (\ref{e:Data_reg_IPF}) on the data,
for any fixed $0<T<\infty$, the following estimates hold:
        \begin{equation}\label{e:conv_IPF}
        \left\{
                \begin{array}{l}
                    \spacer
                        \| \blds{v}^\ep \|_{L^{\infty}(0, T; L^2(\Omega))}
                        + \ep^{\frac{1}{2}} \| \nabla \blds{v}^\ep \|_{L^{2}(0,
T; L^2(\Omega))}
                                \leq \kap_T \, \ep^{\frac{3}{4}},\\
                    \spacer
                         \| \omega^\ep_\phi \|_{L^{\infty}(0, T; L^2(\Omega))}
                         + \ep^{\frac{1}{2}} \| \nabla \omega^\ep_\phi
\|_{L^{2}(0, T; L^2(\Omega))}
                                \leq \kap_T \,
                                        \ep^{\frac{1}{4}}, \\
                    \spacer
                          \| \omega^\ep_x \|_{L^{\infty}(0, T; L^2(\Omega))}
                         + \ep^{\frac{1}{2}} \| \nabla \omega^\ep_x \|_{L^{2}(0,
T; L^2(\Omega))}
                                \leq \kap_T \, \ep^{\frac{1 }{4}},\\
                         \| \omega^\ep_r \|_{L^{\infty}(0, T; L^2(\Omega))}
                         + \ep^{\frac{1}{2}} \| \nabla\omega^\ep_r \|_{L^{2}(0,
T; L^2(\Omega))}
                                \leq
                                        \kap_T \,
                                        \ep^{\frac{3}{4}}, \\
    \norm{\blds{\omega}^\eps}_{L^\iny(0, T; L^2(\Omega))}
                    + \eps^{\frac{1}{2}} \norm{\grad \blds{\omega}^\epsilon}
                        _{L^2(0, T; L^2(\Omega))}
                    \le \kap_T \eps^{\frac{1}{4}}, \\
                \norm{\vv^\eps}_{L^\iny(0, T; H^1(\Omega))}
                    + \eps^{\frac{1}{2}} \norm{\vv^\epsilon}
                        _{L^2(0, T; H^2(\Omega))}
                    \le \kap_T \eps^{\frac{1}{4}}.
               \end{array}
        \right.
        \end{equation}
where $\ep$ is the viscosity coefficient, $\blds{v}^\ep$ is the approximate NS
solution, $\blds{\omega}^\ep$ its vorticity, and $\kappa_T$ is a constant
independent of $\ep$.

In particular, the parallel pipe viscous solution, $\uu^\eps$, converges to
the corresponding inviscid solution, $\uu^0$, as viscosity vanishes, with
rate:
\begin{equation}\label{e:VVL_IPF}
        \| \uu^\eps - \uu^0 \|_{L^{\infty}(0, T; L^2(\Omega))}
                \leq \kap_T \, \ep^{\frac{1}{4}}.
\end{equation}
Accumulation of vorticity at the boundary occurs in the
limit as in \cref{e:conv_PPF_measure}.
\end{theorem}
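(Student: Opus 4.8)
The plan is to mimic the proof of \cref{T:PPF} step by step, tracking the effect of the boundary curvature. First I would derive the initial--boundary value problem satisfied by the corrected error $\blds{v}^\ep := \uu^\ep - \uu^0 - \blds{\Tht}$ by subtracting \cref{e:EE_C} and the corrector equations \cref{e:Eq_Tht_phi}, \cref{e:Eq_Tht_x} from \cref{e:NSE_C}, using the cylindrical formulas \cref{e:curl_IPF}, \cref{e:Laplacian_IPF}, \cref{e:Laplacian_IPF_comp}. The resulting system for $(v^\ep_\phi, v^\ep_x)$ has homogeneous boundary and initial data, and a right-hand side consisting of: (i) the explicit smooth terms $\ep(\Delta u^0_\phi - r^{-2}u^0_\phi)$ and $\ep\Delta u^0_x$ from the Euler solution; (ii) the $e.s.t.$ terms produced by the cut-off $\sig$ in \cref{e:Eq_Tht_phi}, \cref{e:Eq_Tht_x}; and (iii) \emph{new curvature mismatch terms} of the form $\ep\,r^{-1}\pa_r\Tht_\phi$, $\ep\,r^{-2}\Tht_\phi$, $\ep\,r^{-1}\pa_r\Tht_x$, $\ep(r^{-2}-1)\pa_\phi^2\Tht_x$, arising because $\Tht_\phi,\Tht_x$ were built from the flat operators $\pa_r^2$, $\pa_r^2+\pa_\phi^2$ rather than the genuine cylindrical Laplacian. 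Since the annular geometry keeps $r\in[r_L,r_R]$ bounded away from $0$, these $r^{-1},r^{-2}$ factors are harmless as multipliers; what matters is the size of $\pa_r\Tht_\phi,\pa_r\Tht_x$, which by the corrector estimates in \cref{S:est_Tht_IPF} are $O(\ep^{-1/4})$ in $L^2(\Omega)$, so the curvature mismatch is an $O(\ep^{3/4})$ forcing in $L^2$. This is precisely what degrades the rate in \cref{e:conv_IPF} relative to \cref{T:PPF}.

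For \cref{e:conv_IPF}$_1$ I would run the two energy estimates of \cref{S:proof_PPF}. Testing the $v^\ep_\phi$ equation against $v^\ep_\phi$, the curvature zeroth-order term $\ep\,r^{-2}(v^\ep_\phi)^2$ has a favourable sign and is discarded; the smooth forcing is absorbed by Young's inequality at size $\ep^2$, the curvature mismatch at size $\ep^{3/2}$, giving $\frac{d}{dt}\|v^\ep_\phi\|_{L^2}^2+\ep\|\na v^\ep_\phi\|_{L^2}^2\le \kap_T\ep^{3/2}+\kap_T\|v^\ep_\phi\|_{L^2}^2$, and Gr\"onwall yields the $\ep^{3/4}$ bound. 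Testing the $v^\ep_x$ equation against $v^\ep_x$, the drift term $r^{-1}u^\ep_\phi\,\pa_\phi v^\ep_x$ integrates to zero since $u^\ep_\phi$ depends only on $r$ and $\pa_\phi$ is a periodic derivative, the coupling term $r^{-1}v^\ep_\phi\,\pa_\phi(\Tht_x+u^0_x)$ is handled as in \cref{e:engergy_v2_PPL-1} using $\|\pa_\phi(\Tht_x+u^0_x)\|_{L^\infty}\le\kap_T$ and the bound just obtained on $\|v^\ep_\phi\|_{L^2}$, and the curvature mismatch is again $O(\ep^{3/4})$; Gr\"onwall closes it. Combining gives \cref{e:conv_IPF}$_1$, and since $\|\blds{\Tht}\|_{L^\infty(0,T;L^2(\Omega))}\le\kap_T\ep^{1/4}$ (a boundary layer of width $\sqrt\ep$ and $O(1)$ amplitude), the triangle inequality gives \cref{e:VVL_IPF}.

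For the vorticity estimates \cref{e:conv_IPF}$_{2,3,4,5}$ I would differentiate the $\blds{v}^\ep$ system following \cref{e:omega_IPF}, applying $-\pa_r$, $r^{-1}\pa_r(r\,\cdot)$, $r^{-1}\pa_\phi$ to obtain drift--diffusion equations for $\omega^\ep_\phi,\omega^\ep_x,\omega^\ep_r$ (the $\phi$-equation carrying a vortex-tilting term $\pa_r u^\ep_\phi\cdot\omega^\ep_r$ and the coupling with $\omega^\ep_x$, as in \cref{e:Eq_vorticity PPF}, now plus curvature commutators). As in \cref{S:Lighthill} and \cref{S:proof_PPF}, restricting the $\blds{v}^\ep$ momentum equations to $\Gamma$ --- where $\blds{v}^\ep$ and the $e.s.t.$ forcing vanish --- converts the Laplacian into the normal second derivative and furnishes inhomogeneous Neumann conditions for $\pa_r\omega^\ep_\phi|_\Gamma$ and $\pa_r\omega^\ep_x|_\Gamma$ (of size $O(1)$, involving derivatives of $u^0$ and curvature terms) together with $\omega^\ep_r|_\Gamma=0$. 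Energy estimates on these equations, with the boundary integrals controlled by the trace theorem interpolated against $\ep^{1/2}\|\na\omega^\ep_i\|_{L^2}$ (as in \cref{e:conv_w_2_PPL-2}, \cref{e:conv_w_1_PPL-2}), the one-dimensional Agmon inequality in the normal variable $r$ for the $r$-only function $\omega^\ep_x$, and Gr\"onwall, give \cref{e:conv_IPF}$_{2,3,4}$; the extra curvature forcing again costs roughly half a power of $\ep$ relative to \cref{T:PPF} (for instance $\ep\,r^{-1}\pa_r^2\Tht_\phi$ involves $\pa_r^2\Tht_\phi=O(\ep^{-3/4})$ in $L^2$). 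Summing yields \cref{e:conv_IPF}$_5$; since $\blds{v}^\ep$ is divergence free and vanishes on $\Gamma$, the elliptic bound $\|\uu^\ep\|_{H^k}\le C_k\|\blds{\omega}^\ep\|_{H^{k-1}}$ gives \cref{e:conv_IPF}$_6$. Finally, \cref{e:conv_IPF}$_5$ together with the $L^\infty(0,T;L^1(\Omega))$ bound on $\curl\blds{\Tht}$ from \cref{S:est_Tht_IPF} (the analogue of \cref{e:curlThtBound}) bounds $\curl(\uu^\ep-\uu^0)$ uniformly in $L^\infty(0,T;L^1(\Omega))$, and \cref{C:ConvKelMeasure} then gives the weak-$*$ convergence $\curl\uu^\ep\to\curl\uu^0+(\uu_0\times\n)\mu$ in $L^\infty(0,T;\Cal{M}(\ov\Omega))$, exactly as in \cref{e:conv_PPF_measure}.

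The main obstacle is the bookkeeping of the curvature contributions: one must verify that every new $r^{-1}$ or $r^{-2}$ factor multiplies either a bounded quantity or a corrector derivative whose precise $\ep$-dependent $L^2(\Omega)$ size is supplied by \cref{S:est_Tht_IPF}, and that the Lighthill-type boundary conditions for $\blds{\omega}^\ep$ still close after these curvature corrections --- in particular that the Neumann data for $\omega^\ep_\phi$ and $\omega^\ep_x$ stay $O(1)$ in the relevant trace norms so the trace--interpolation--Young step goes through with room to absorb $\ep\|\na\omega^\ep_i\|_{L^2}^2$. The usual difficulty at the pipe axis is sidestepped here by the standing assumption of an annular cross-section, so no Hardy inequality or two-scale localization is required.
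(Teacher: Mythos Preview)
Your plan is essentially the paper's own proof: derive the error system \cref{e:Eq_v_IPF} with the curvature residuals $R^\ep(\Tht_\phi), R^\ep(\Tht_x)$, run energy estimates with Gr\"onwall for \cref{e:conv_IPF}$_1$, differentiate to obtain the vorticity system \cref{e:Eq_vorticity IPF} with Lighthill-type boundary conditions \cref{e:BC_IC_vorticity IPF}, close each component by trace/interpolation/Young and Gr\"onwall, and finish with elliptic regularity and \cref{C:ConvKelMeasure}. The ordering the paper uses --- $\omega^\ep_x$ first, then $\omega^\ep_r$, then $\omega^\ep_\phi$ --- matters because the $\omega^\ep_\phi$ equation carries the coupling terms $\omega^\ep_x\,\pa_\phi(u^0_x+\Tht_x)$ and $\pa_r u^\ep_\phi\cdot\omega^\ep_r$, and the $L^\infty$ control of $\pa_r u^\ep_\phi$ needs the one-dimensional Agmon inequality on $\omega^\ep_x$ (see \cref{e:omega_phi_IPF 8}--\cref{e:omega_phi_IPF 11}); you have this implicitly but should make the dependence explicit.

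One point where your sketch is off: the Neumann datum for $\omega^\ep_x$ is \emph{not} $O(1)$. Restricting \cref{e:Eq_v_IPF}$_1$ to $\Gamma$ gives $\pa_r\omega^\ep_x|_\Gamma = -\ep^{-1}\big(\ep\Delta u^0_\phi - \ep r^{-2}u^0_\phi + R^\ep(\Tht_\phi)\big)|_\Gamma$, and the curvature residual $R^\ep(\Tht_\phi)$ contains $\ep\,r^{-1}\pa_r\Tht_\phi$, whose boundary trace is of order $\ep^{1/2}(1+t^{-1/2})$ by \cref{L:pointwise}; after division by $\ep$ the Neumann datum is of order $(1+t^{-1/2})\ep^{-1/2}$. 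In the boundary integral this enters as $\ep\,|\pa_r\omega^\ep_x|_\Gamma\,|\omega^\ep_x|_\Gamma \le \kap(1+t^{-1/2})\ep^{1/2}\|\omega^\ep_x\|_{L^2(\Gamma)}$, and the paper closes this with the trace bound of \cref{C:TraceCor} followed by the three-factor Young inequality of \cref{L:Youngs} (see \cref{e:EST_V_RSF_3}), producing the integrable-in-time coefficient $(1+t^{-2/3})$ and the $\ep^{1/2}$ forcing responsible for the $\ep^{1/4}$ rate in \cref{e:conv_IPF}$_3$. The analogous trace step for $\omega^\ep_\phi$ \emph{does} see $O(1)$ data, since \cref{e:BC_IC_vorticity IPF}$_1$ reads $\pa_r\omega^\ep_\phi + \omega^\ep_\phi = \Delta u^0_x$ on $\Gamma$, and the extra $\omega^\ep_\phi|_\Gamma$ is absorbed as in \cref{e:omega_phi_IPF 3}. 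Apart from this correction to the boundary bookkeeping (and the associated time-singular integrating factors in Gr\"onwall), your outline matches the paper.
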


As for the channel case, since the size of the corrector can be estimated, by
the triangle inequality the rate of convergence to the inviscid solution is
shown to be optimal.

\begin{rem}\label{r:Conv_P}
\textnormal{
We observe that the rates of convergence for pipe flows is slower than that for
channel flows, which can be ascribed to the presence of a non-zero
boundary curvature in the pipe geometry.}

\textnormal{
We also note that, since the gradient of the pressure is a function of the
$\phi$-component of the velocity (see (\ref{e:NSE_C})$_3$,
(\ref{e:EE_C})$_3$), from (\ref{e:VVL_IPF}) we obtain convergence of the
gradient of the pressure with rate:
\begin{equation*}\label{e:conv_IPF_p}
            \bigg\|
                r \, \dfrac{\pa \big(p^\ep - p^0 \big) }{\pa r}
            \bigg\|_{L^{\infty}(0, T; L^1(\Omega))}
        \leq
        \kap_T \ep^{\frac{1}{4}}.
\end{equation*}
}
\end{rem}



\begin{rem}\label{r:IPF using RSF}
\textnormal{
By considering the evolution of the cross-sectional component of the velocity,
$\uu^\ep_\phi$ and $\uu^0_\phi$,
and its vorticity, $\omega^\ep_x$, we recover the known bounds for CSF.
Accumulation of vorticity at the boundary in the limit takes a particularly
simple form in this case, i.e.,
\begin{equation}\label{e:IPF_RRF_2}
 \lim_{\ep \rightarrow 0}
        \big(
                \curl \uu^\eps \cdot \e_x, \, \varphi
        \big)_{L^2(\Omega)}
                =
                         \big(
                                \curl \uu^0 \cdot \e_x, \, \varphi
                         \big)_{L^2(\Omega)}
                         +
                         \big(
                                u^0_x, \, \varphi
                         \big)_{L^2(\Gamma)},
\end{equation}
for all $\varphi \in C(\ov{\Omega})$, uniformly in $0 < t <T$ (cf.
\cite{LMNT08}).
directions.
}
\end{rem}

\subsection{Estimates of the corrector \texorpdfstring{$\blds{\Theta}$}{}
}\label{S:est_Tht_IPF}
In this section,
we discuss needed estimates on the corrector
$\blds{\Tht} = \Tht_{\phi} (r, t) \e_{\phi} + \Tht_{x} (\phi, r, t) \e_x$,
again using the results in Appendix \ref{S.App}. Since the analysis is very
similar to that for channel flows in \cref{S:est_Tht_PPF}, we omit the details
of the proofs. We recall that each component of the corrector is a combination
of correctors near each wall of the pipe using cut-off functions.

Bounds on the derivatives of $\Tht_\phi$ follow by first observing that
\begin{equation}\label{e:RHS_Eq_Tht_IPF}
        (\text{the right-hand side of } (\ref{e:Eq_Tht_phi})_1)
            =   e.s.t.,
\end{equation}
since $\Tht_{\phi, L}$ and $\Tht_{\phi, R}$ satisfy the pointwise estimates
in Lemma \ref{L:pointwise} with $\eta$  replaced by $r- r_L$ and $r_R -
r$,respectively.
Then, the $L^p$ estimates in Lemma \ref{L:L2} give that
\begin{equation}\label{e:Lp_est_Tht_phi}
\Big\| \dfrac{\pa^{m} \Tht_\phi}{\pa r^m} \Big\|_{L^p(\Omega)}
                                            \leq
                                                    \kap_T \, (1 +
t^{\frac{1}{2p} - \frac{m}{2}}) \, \ep^{\frac{1}{2p} - \frac{m}{2}},
                                            \text{ } 1 \leq p \leq \infty,
                                            \text{ } 0 \leq m \leq 2,
                                            \text{ } 0 < t < T.
\end{equation}

To derive  estimates on the second component $\Tht_x$ of the corrector,
we use the fact that
$\Tht_{x, \, L}$  satisfies the parabolic system
(\ref{e:HEAT_like_eq}) in the domain $(0, L) \times (r_L, \infty)$ with
$\tau=\phi$, $\eta=r-r_L$ , and
\begin{equation*}
	\left\{
		\begin{array}{l}
			\spacer
				\mathcal{U}
					= \dfrac{1}{r} \Big( \Tht_\phi +
u^0_\phi\Big),\\
			\spacer
				G
					= - \dfrac{1}{r} \Tht_\phi \dfrac{\pa
u^0_x}{\pa \phi},\\
				g = - u^0_x |_{r = r_L \text{ (or } r_R)}.
		\end{array}
	\right.
\end{equation*}
(And similarly for $\Tht_{x, \, R}$
with $r-r_L$ replaced by $r_R-r$).
Consequently, both  $\Tht_{x, \, L}$ and $\Tht_{x, \, R}$ satisfies
the estimates in Lemma \ref{l:Lp_est_heat_like_sol}, which give:
\begin{equation}\label{e:Lp_est_Tht_x}
\left\{
\begin{array}{l}
	\spacer
	\Big\| \dfrac{\pa^{k} \Tht_x}{\pa \phi^k} \Big\|_{L^{\infty}(0, T;
L^p(\Omega))}
	+
	\ep^{\frac{1}{2p} + \frac{1}{4}}
	\Big\|
		\nabla \dfrac{\pa^{k} \Tht_x}{\pa \phi^k}
	\Big\|_{L^{2}(0, T; L^2(\Omega))}
                                                                \leq
                                                                        \kap_T
\ep^{\frac{1}{2p}},
                                                                        \quad
                                                                        1 \leq p
\leq \infty,\\
         \spacer
	\Big\| \dfrac{\pa^{k+1} \Tht_x}{\pa \phi^k \pa r} \Big\|_{L^{\infty}(0,
T; L^2(\Omega))}
	+
	\ep^{\frac{1}{2}}
	\Big\|
		\nabla \dfrac{\pa^{k+1} \Tht_x}{\pa \phi^k \pa r}
	\Big\|_{L^{2}(0, T; L^2(\Omega))}
                                                                \leq
                                                                        \kap_T
\ep^{-\frac{1}{4}},\\
	\Big\| \dfrac{\pa^{k+1} \Tht_x}{\pa \phi^k \pa r} \Big\|_{L^{\infty}(0,
T; L^1(\Omega))}
	\leq \kap_T,
\end{array}
\right.
\end{equation}
with $k \geq 0$,
for a constant, $\kap_T$, depending on $T$ and the other data, but independent
of $\ep$.
The $L^1$ bound of $\pa^k \Tht_x/ \pa \phi^k$ is obtained by
the estimates in Lemma \ref{l:Lp_est_heat_like_sol} and
the continuity of $L^p$ norm in a bounded interval.
Moreover, performing an energy estimate on $\Tht_x$ using, as test function, any
derivative of the cut-off function $\sigma$, yields
\begin{equation}\label{e:RHS_SIMPLE_FACT_IPF}
	(\text{the second term on the right hand side of } (\ref{e:Eq_Tht_x})_1)
            =   e.s.t..
\end{equation}

Finally, the bounds on the derivatives of the corrector plus the explicit form
of $\curl \blds{\Tht}$ (see \cref{e:omega_IPF}) imply that
\begin{align}\label{e:curlThtBound_IPF}
    \norm{\curl \blds{\Tht}}_{L^\iny(0, T; L^1(\Omega))}
        \le \kappa_T.
\end{align}

\subsection{Proof of Theorem \ref{T:IPF}}\label{S:proof_IPF}

We recall the notation from the Introduction: $\kappa$, $\kappa_T$  will denote
generic constants that may depend on the data or time, but not on $\ep$, and
may change from line to line.

To prove (\ref{e:conv_IPF})$_1$, using (\ref{e:Laplacian_IPF}), (\ref{e:NSE_C}),
(\ref{e:EE_C}), (\ref{e:Eq_Tht_phi}), (\ref{e:Eq_Tht_x}), and (\ref{e:corrected
velocity IPF}),
we write the equation for $\blds{v}^\ep$:

We begin by deriving the equations satisfied by the components of
$\blds{v}^\ep$, $v^\ep_\phi$ and $v^\ep_x$. We  utilize the the form
(\ref{e:Laplacian_IPF}) for the  Laplacean of a parallel pipe velocity field,
and isolate terms that are negligible in $\ep$, exploiting the estimates for
the corrector obtained above. We therefore have:
\begin{equation}\label{e:Eq_v_IPF}
        \left\{
                \begin{array}{rl}
                        \spacer \displaystyle
                                \dfrac{\pa v^\ep_{\phi}}{\pa t}
                                - \ep \Delta v^\ep_{\phi}
                                + \ep  \dfrac{1}{r^2} \, v^\ep_{\phi}
                                =
                                \ep \Delta u^0_{\phi}
                                - \ep \dfrac{1}{r^2} \, u^0_{\phi}
                                + R^\ep (\Tht_{\phi})
                                +e.s.t.,
                        & \displaystyle
                                \text{in } \Omega,\\
                        \halfspacer \displaystyle
                                \dfrac{\pa v^\ep_x}{\pa t}
                                -
                                \ep \Delta v^\ep_x
                                +
                                \dfrac{1}{r} \,
                                u^\eps_{\phi} \, \dfrac{\pa v^\ep_x}{\pa \phi}
                                + v^\ep_{\phi}
                                    \Big( \dfrac{\pa \Tht_x}{\pa \phi} +
\dfrac{\pa u^0_x}{\pa \phi} \Big)
                                = \ep 
                                                \Delta u^0_x
                                   + R^\ep (\Tht_{x})
                                  +e.s.t.,
                        & \displaystyle
                                \text{in } \Omega,\\
                        \halfspacer \displaystyle
                                \blds{v}^\ep = 0,
                        &
                                \text{on } \Gamma, \\
                        \blds{v}^\ep|_{t=0} = 0,
                \end{array}
        \right.
\end{equation}
where
\begin{equation}\label{e:R_IPF}
\left\{
\begin{array}{l}
\spacer
    R^\ep (\Tht_{\phi})
            = \dfrac{\pa \Tht_\phi}{\pa t}
                                              +\ep \Big\{
                                                \dfrac{1}{r} \dfrac{\pa}{\pa r}
\Big( r \dfrac{\pa
                                                \Tht_\phi}{\pa r}\Big)
                                                - \dfrac{1}{r^2} \Tht_\phi
                                                \Big\} \\
    \displaystyle
    R^\ep (\Tht_{x})
            = -\ep \dfrac{1}{r} \dfrac{\pa \Tht_x}{\pa r} - \ep \dfrac{1}{r^2}
\dfrac{\pa \Tht_x}{\pa \phi^2}.
\end{array}
\right.
\end{equation}

The bound for $v_\phi$ can be easily obtained via standard energy
estimates. In fact, by multiplying the first equation in
(\ref{e:Eq_v_IPF}) by $ v^\ep_\phi$, and integrating over $\Omega$ by
parts,
we deduce that
\begin{equation}\label{e:EST_RSF_2}
\begin{array}{l}
\spacer \displaystyle
        \dfrac{d}{dt} \| v^\ep_\phi \|_{L^2(\Omega)}^2
        + \ep \| \nabla v^\ep_\phi \|_{L^2(\Omega)}^2
        \leq
                        \kap \ep
                        \Big|
                                \big( \Delta \uu_0\cdot \e_\phi, \, v^\ep_\phi
\big)_{L^2(\Omega)}
                        \Big|
                        +
                        \kap
                        \Big|
                                \int_{r_L}^{r_R} R^\ep(\Tht_\phi) v^\ep_\phi r\, dr
                        \Big|,
\end{array}
\end{equation}
for some positive constant $\kappa$, where we used that $\uu^0\cdot \e_\phi$ is
time independent, and that both $R(\tht_\phi)$ and $v^\ep_\phi$ are radial.
The estimates on the corrector $\Tht_\phi$ of the previous section give:
\begin{equation}\label{e:R_RSF_est_3}
        \Big|
        \int_{r_L}^{r_R} R^\ep(\Tht_\phi) v^\ep_\phi r \, dr
        \Big|
                \leq \kap_T \, t^{- \frac{1}{4}} \ep^{\frac{3}{4}} \| \sqrt{r}
\, v^\ep_\phi \|_{L^2(r_L, r_R)}
                \leq \kap_T t^{- \frac{1}{2}} \ep^{\frac{3}{2}} + \|
\blds{v}^\ep \|_{L^2(\Omega)}^2,
\end{equation}
from which it follows, by applying Cauchy's inequality on $\big( \Delta
\uu_0\cdot \e_\phi, \, v^\ep_\phi \big)_{L^2(\Omega)}$, that
\begin{equation}\label{e:EST_RSF_3}
\begin{array}{l}
\spacer \displaystyle
        \dfrac{d}{dt} \| \blds{v}^\ep \|_{L^2(\Omega)}^2
        + \ep \| \nabla \blds{v}^\ep \|_{L^2(\Omega)}^2
                \leq
                    \kap_T \, (1 + t^{-\frac{1}{2}} ) \ep^{\frac{3}{2}} + 2 \|
\blds{v}^\ep \|_{L^2(\Omega)}^2,
\end{array}
\end{equation}
where $\kappa_T$ depends on the data, but not on $\ep$.
Gr\"onwall's inequality then gives:
\begin{equation}\label{e:IPF_RRF_11}
                    \| v^\ep_{\phi} \|_{L^{\infty}(0, T; L^2(\Omega))}
                    + \ep^{\frac{1}{2}} \| \nabla v^\ep_{\phi} \|_{L^{2}(0, T;
L^2(\Omega))}
                                \leq \kap_T \, \ep^{\frac{3 }{4}}.
\end{equation}



We proceed similarly for the component of the velocity
along the axis.  We multiply (\ref{e:Eq_v_IPF})$_2$ by  $v^\ep_x$ and
integrate it over $\Omega$.
Using the fact that $v^\eps_{\phi}$ is independent of the variable
$\phi$, we obtain:
\begin{equation*}\label{e:engergy_v_x_IPF-1}
    \begin{array}{l}
        \spacer
                \dfrac{1}{2}\dfrac{d}{dt} \| v^\ep_x \|_{L^2(\Omega)}^2
                + \ep \| \nabla v^\ep_x \|_{L^2(\Omega)}^2\\
        \qquad
                \leq
                        \| v^\ep_{\phi} \|_{L^2(\Omega)}
                        \Big\|
                            \dfrac{\pa \Tht_x}{\pa \phi}
                            + \dfrac{\pa u^0_x}{\pa \phi}
                        \Big\|_{L^{\infty}(\Omega)}
                        \| v^\ep_x \|_{L^2(\Omega)}
                        +
                        \big(
                        \ep
                        \| \Delta u^0_x \|_{L^2(\Omega)}
                        +
                        \| R^\ep (\Tht_{x}) \|_{L^2(\Omega)}
                        \big)
                        \| v^\ep_x \|_{L^2(\Omega)}.
    \end{array}
\end{equation*}
Again, Cauchy's inequality, the estimates on the corrector, and the bound on
$v^\ep_\phi$ in (\ref{e:IPF_RRF_11}) yield:
\begin{equation*}\label{e:engergy_v_x_IPF-2}
                \dfrac{1}{2}\dfrac{d}{dt} \| v^\ep_x \|_{L^2(\Omega)}^2
                + \ep \| \nabla v^\ep_x \|_{L^2(\Omega)}^2
                \leq
                        \kap \ep^{\frac{3}{4}} \| v^\ep_{x} \|_{L^2(\Omega)}
                \leq
                        \kap \ep^{\frac{3}{2}}
                        +
                        \| v^\ep_{x} \|_{L^2(\Omega)}^2.
\end{equation*}
and by Gr\"onwall's:
\begin{equation}\label{e:engergy_v_x_IPF-3}
        \| v^\ep_x \|_{L^{\infty}(0, T; L^2(\Omega))}
        + \ep^{\frac{1}{2}}
            \| \nabla v^\ep_x \|_{L^{2}(0, T; L^2(\Omega))}
        \leq
            \kap_T \ep^{\frac{3}{4}},
\end{equation}
which together with  (\ref{e:IPF_RRF_11}) gives (\ref{e:conv_IPF})$_1$.
Estimate (\ref{e:VVL_IPF}) follows from (\ref{e:conv_IPF})$_1$ by the triangle
inequality, given the estimates on the corrector $\Tht$.
%
%

To verify (\ref{e:conv_IPF})$_{2, 3, 4}$, we once gain follow the idea of Lighthill.
We differentiate (\ref{e:Eq_v_IPF})$_i$, $i=1,2$, in $r$, and divide (\ref{e:Eq_v_IPF})$_2$ by $r$ after differentiating it in $\phi$.
We then find the equations for $\blds{\omega}^\ep$, utilizing the explicit form for the pipe vorticity given in (\ref{e:omega_IPF}):
\begin{equation}\label{e:Eq_vorticity IPF}
        \left\{
                \begin{array}{l}
                        \spacer \displaystyle
                                \dfrac{\pa \omega^\ep_{\phi}}{\pa t}
                                - \ep \Delta \omega^\ep_{\phi}
                                +
                                \dfrac{1}{r} \,
                                u^\eps_{\phi} \, \dfrac{\pa \omega^\ep_{\phi}}{\pa {\phi}}\\
                        \spacer \qquad
                                =
                                - \ep \dfrac{\pa (\Delta u^0_x)}{\pa r}
                                - \dfrac{\pa (R^\ep (\Tht_{x}))}{\pa r}
                                +
                                \Big(
                                    \omega^\ep_x - \dfrac{1}{r} \, v^\ep_{\phi}
                                \Big)
                                \dfrac{\pa (u^0_x + \Tht_x)}{\pa {\phi}}
                                +
                                v^\ep_{\phi} \, \dfrac{\pa^2 (u^0_x + \Tht_x)}{\pa \phi \, \pa r}\\
                        \spacer \qquad \quad
                                -
                                \dfrac{1}{r} \, u^\eps_{\phi} \, \omega^\ep_r
                                +
                                \dfrac{\pa u^\eps_{\phi}}{\pa r} \, \omega^\ep_r
                                + e.s.t.
                        \quad
                                \text{in } \Omega,\\
                        \spacer \displaystyle
                                \dfrac{\pa \omega^\ep_x}{\pa t}
                                - \ep \dfrac{1}{r} \dfrac{\pa}{\pa r} \Big( r \, \dfrac{\pa \omega^\ep_x}{\pa r} \Big)
                                =
                                \dfrac{1}{r} \dfrac{\pa}{\pa r} \Big(
                                                                                r \, \big(\text{right-hand side of } (\ref{e:Eq_v_IPF})_1 \big)
                                                                            \Big)
                        \quad
                                \text{in } \Omega,\\
                        \displaystyle \spacer
                                \dfrac{\pa \omega^\ep_r}{\pa t}
                                - \ep \Delta \omega^\ep_r
                                - \ep \dfrac{1}{r^2} \, \omega^\ep_r
                                - 2 \ep \dfrac{1}{r} \dfrac{\pa \omega^\ep_r}{\pa r}
                                +
                                \dfrac{1}{r} \,
                                u^\eps_{\phi} \, \dfrac{\pa \omega^\ep_r}{\pa \phi} \\
                        \qquad
                                =
                                \ep \, \dfrac{1}{r} \,
                                \dfrac{\pa (\Delta u^0_x)}{\pa \phi}
                                +
                                \dfrac{1}{r} \,
                                \dfrac{\pa (R^\ep (\Tht_{x}))}{\pa \phi}
                                -
                                \dfrac{1}{r} \,
                                v^\ep_{\phi} \, \dfrac{\pa^2 (u^0_x + \Tht_x)}{\pa \phi^2}
                        \quad
                                \text{in } \Omega.
                \end{array}
        \right.
\end{equation}
The boundary and initial conditions for $\blds{\omega}^\ep$ can be obtained by
restricting (\ref{e:Eq_v_IPF})$_{1,2}$ on $\Gamma$ and using
(\ref{e:Eq_v_IPF})$_{3, 4}$:
 \begin{equation}\label{e:BC_IC_vorticity IPF}
        \left\{
                \begin{array}{l}
                        \spacer \displaystyle
                                \dfrac{\pa \omega^\ep_{\phi}}{\pa r}
                                +
                                \omega^\ep_{\phi}
                                =
                                \Delta u^0_x
                        \quad
                                \text{on } \Gamma, \\
                        \spacer \displaystyle
                                \dfrac{\pa \omega^\ep_x}{\pa r}
                                =
                                - \ep^{-1} \, \big(\text{right-hand side of } (\ref{e:Eq_v_IPF})_1 \text{ up to  $e.s.t.$ terms} \big)
                        \quad 
                                \text{on } \Gamma, \\
                        \spacer \displaystyle
                                \omega^\ep_r = 0
                        \quad 
                                \text{on } \Gamma, \\
                        \blds{\omega}^\ep|_{t=0} = 0.
                \end{array}
        \right.
\end{equation}
Above, we have used the fact that the  $e.s.t.$ terms  in (\ref{e:Eq_v_IPF}) vanish on $\Gamma$ (see the right-hand side of (\ref{e:Eq_Tht_phi}) and (\ref{e:Eq_Tht_x})) .

We first estimate $\omega^\eps_x$, which is precisely the scalar vorticity of
the cross-sectional velocity in the pipe. Taking $\omega_x^\ep$ as test
function and integrating by parts gives:
\begin{multline}\label{e:H1_est_RSF 1-1}
        \dfrac{1}{2} \dfrac{d}{dt} \| \omega^\ep_x \|_{L^2(\Omega)}^2
        + \ep \|
                            \nabla \omega^\ep_x
                    \|_{L^2(\Omega)}^2
        =
            \ep \Big(
                        \omega^\ep \dfrac{\pa \omega^\ep_x }{\pa r}
                    \Big)\Big|_{\Gamma} \\
            +
            \int_{r_L}^{r_R} \Big\{
                                        \curl \big( \ep (\Delta \uu_0\cdot
\e_\phi +
R^\ep(\Tht_\phi)) \e_{\phi}\big)
                                \Big\} r \, \omega^\ep_x
                                \, dr + \; e.s.t,
\end{multline}
identifying the two-dimensional curl with a scalar function, and exploiting
again that the cross-sectional inviscid velocity is stationary, and that
$u_{0,\phi}$, $\omega^\ep_x$, $R^\ep(\Tht_\phi)$ are radial.
We can bound the first term on the right-hand side above as follows:
\begin{equation*}\label{e:EST_V_RSF_1}
    \ep
        \left|
        \Big(
              \omega^\ep_x(r_*,t) \, \dfrac{\pa \omega^\ep_x(r_*,t) }{\pa r}
        \Big)\Big|
        \right|
            \leq
                \ep
                    \left|
                        \dfrac{\pa \omega^\ep_x }{\pa r}(r_*,t)
                        \right|
                        \,
                            \big|
                                \omega^\ep_x(r_*,t)
                             \big|
            \leq
                \kap (1 + t^{- \frac{1}{2}})\ep^{\frac{1}{2}}
                \abs{\omega^\eps_x(r_*,t)},
\quad
	* = L, R,
\end{equation*}
where we have used the Neumann boundary condition (\ref{e:BC_IC_vorticity
IPF})$_2$ for $\omega^\ep_x$ and the
estimates for the corrector to bound the radial derivative of $\omega^\ep_x$
on $\Gamma$. But
$\abs{\omega^\eps_x(r_*, t)} = (2 \pi)^{-1/2} \|\omega^\ep_x(t)\|_{L^2(\Gamma)}$,
$*=L, R$,
which is estimated using the bounds on the trace in \cref{C:TraceCor} with $p =
2$.
We then apply a modified version of Young's inequality with three factors
(more precisely we apply \cref{L:Youngs} with $p = 4/3$, $q = 4$, $p_1 = 3/2$,
$p_2 =3$) and obtain:
\begin{align}\label{e:EST_V_RSF_3}
    \begin{split}
    \eps \abs{\pr{\pdx{\omega^\eps_x \omega^\eps_x}{r}|_\Gamma}}
        &\le \kappa (1 + t^{-\frac{1}{2}}) \eps^{\frac{1}{2}}
            \norm{\omega^\eps}_{L^2(\Omega)}^{\frac{1}{2}}
            \norm{\grad \omega^\eps_x}_{L^2(\Omega)}^{\frac{1}{2}} \\
        &= \kappa
            \brac{(1 + t^{-\frac{1}{2}})^\frac{2}{3} \eps^{\frac{1}{4}}}
            \brac{(1 + t^{-\frac{1}{2}})^\frac{1}{3}
                \norm{\omega^\eps_x}_{L^2(\Omega)}^{\frac{1}{2}}}
            \brac{2^{-\frac{3}{4}} \eps^{\frac{1}{4}}
            \norm{\grad \omega^\eps_x}_{L^2(\Omega)}^{\frac{1}{2}}} \\
        &\le \kappa (1 + t^{-\frac{2}{3}})
            \eps^{\frac{1}{2}}
            +
            \kappa (1 + t^{-\frac{2}{3}}) \norm{\omega^\eps_x}_{L^2(\Omega)}^2
            +
            \frac{\eps}{2}
            \norm{\grad \omega^\eps_x}_{L^2(\Omega)}^2.
    \end{split}
\end{align}

Similarly, the regularity of $\uu_0$, the
estimates for the corrector, and
standard inequalities give for the second term on the right-hand side of
(\ref{e:H1_est_RSF 1-1}):
\begin{equation}\label{e:H1_est_RSF 2}
        \begin{array}{l}
                \displaystyle 
                \Big|
                    \int_{r_L}^{r_R}
                        \curl \big( \ep (\Delta \uu_0\cdot \e_\phi +
R^\ep(\Tht_\phi)) \e_{\phi}\big) \,
                        r \omega^\ep_x
                    \, dr
                \Big|\\
                \displaystyle \spacer \hspace{15mm}
                                \leq
                                        \ep \Big\{
                                                    \kap \| \curl \Delta \uu_0
\|_{L^2(\Omega)}
                                                    +
                                                    \sum_{m=0}^2 \Big\|

       \sqrt{r} \dfrac{\pa^m \Tht}{\pa r^m}

   \Big\|_{L^2(0,1)}
                                                \Big\}
                                                \kap
                                                \|  \omega^\ep_x
\|_{L^2(\Omega)}\\
                \displaystyle \spacer \hspace{15mm}
                                \leq
                                        \kap (1 + t^{-\frac{3}{4}})
\ep^{\frac{1}{4}}  \| \omega^\ep_x \|_{L^2(\Omega)}\\
                \displaystyle \hspace{15mm}
                                \leq
                                        \kap (1 + t^{-\frac{3}{4}})
\ep^{\frac{1}{2}}
                                        +
                                        \kap (1 + t^{-\frac{3}{4}})\|
\omega^\ep_x \|_{L^2(\Omega)}^2.
        \end{array}
\end{equation}
Combining the estimates above into (\ref{e:H1_est_RSF 1-1}), we have for
$\ep$ small enough  that
\begin{equation}\label{e:H1_est_RSF 3}
        \dfrac{d}{dt} \| \omega^\ep_x \|_{L^2(\Omega)}^2
        + \ep \| \nabla \omega^\ep_x \|_{L^2(\Omega)}^2
                    \leq
                            \kap (1 + t^{-\frac{3}{4}}) \ep^{\frac{1}{2}}
                                        +
                            \kap (1 + t^{-\frac{3}{4}})\| \omega^\ep_x
\|_{L^2(\Omega)}^2,
\end{equation}
from which the bound on $\omega^\ep_x$ follows from Gr\"onwall's lemma with the
integrating factor $\textnormal{exp}(-\kap t -4 \kap t^{1/4})$:
\begin{equation} \label{e:IPF_RRF_12}
   \| \omega^\ep_x \|_{L^{\infty}(0, T; L^2(\Omega))}
                   + \ep^{\frac{1}{2}} \| \nabla \omega^\ep_x \|_{L^{2}(0, T;
L^2(\Omega))}
                                \leq \kap_T \, \ep^{\frac{1 }{4}}.
\end{equation}


We proceed in an entirely analogous fashion for the radial component of the
vorticity, multiplying (\ref{e:Eq_vorticity IPF})$_3$ by $\omega^\ep_r$ and
integrating over $\Omega$ by parts using the homogeneous Dirichlet condition on
$\omega^\ep_r$:
 \begin{equation}\label{e:conv_w_r_IPL-1}
    \begin{array}{l}
        \spacer
                \dfrac{1}{2}\dfrac{d}{dt} \| \omega^\ep_r \|_{L^2(\Omega)}^2
                + \ep \| \nabla \omega^\ep_r \|_{L^2(\Omega)}^2\\
        \qquad \spacer 
                        \leq
                                    \ep \Big\|
                                                \dfrac{1}{r} \, \omega^\ep_r
                                            \Big\|_{L^2(\Omega)}^2
                                    +
                                    \ep
                                            \Big\|
                                                \dfrac{1}{r}
                                                \dfrac{\pa (\Delta u^0_x )}{\pa \phi}
                                            \Big\|_{L^2(\Omega)}
                                            \|
                                                \omega^\ep_r
                                    	\|_{L^2(\Omega)}
                                    +
                                    \Big\|\dfrac{\pa(
                                    R^\ep (\Tht_{x}))}{\pa \phi}
                                    \Big\|_{L^2(\Omega)}
                                            \|
                                                \omega^\ep_r
                                    	\|_{L^2(\Omega)}\\
	\qquad \quad \spacer \displaystyle
                                    +
                                    \| v^\ep_{\phi} \|_{L^2(\Omega)}
                                    \Big\|
                                            \dfrac{1}{r}
                                         \dfrac{\pa^2 (u^0_x + \Tht_x)}{\pa \phi^2}
                                    \Big\|_{L^{\infty}(\Omega)}
                                    \|
                                                \omega^\ep_r
                                    \|_{L^2(\Omega)}\\

        \qquad  
                        \leq
                                     \kap_T \ep^{\frac{3}{2}}
                                    +
                                    \kap_T
                                    \|
                                                \omega^\ep_r
                                    \|_{L^2(\Omega)}^2.
    \end{array}
\end{equation}
As before, (\ref{e:conv_IPF})$_4$ follows by applying Gr\"onwall's inequality.

A similar integration by parts leads to the following energy estimate for $
\omega^\ep_{\phi}$, using  again that  $u^\eps_{\phi}$ is independent of $\phi$:
\begin{equation}\label{e:omega_phi_IPF 1}
    \begin{array}{l}
        \spacer
            \dfrac{1}{2} \dfrac{d}{dt} \| \omega^\ep_{\phi} \|_{L^2(\Omega)}^2
            +
            \ep \| \nabla \omega^\ep_{\phi} \|_{L^2(\Omega)}^2\\
        \spacer \qquad
            \leq
            \ep \Big\| \dfrac{\pa \omega^\ep_{\phi} }{\pa r} \, \omega^\ep_{\phi} \Big\|_{L^1(\Gamma)}
            +
            \ep \Big\| \dfrac{\pa (\Delta u^0_x) }{\pa r} \Big\|_{L^2(\Omega)}
            \| \omega^\ep_{\phi} \|_{L^2(\Omega)}
            +
            \Big\|\dfrac{\pa(
                                    R^\ep (\Tht_{x}))}{\pa r}
                                    \Big\|_{L^2(\Omega)}
                                            \|
                                                \omega^\ep_\phi
                                    	\|_{L^2(\Omega)}
            \\
        \spacer \qquad \quad
            +
            \Big\{
                    \| \omega^\ep_x \|_{L^2(\Omega)}
                    \Big\| \dfrac{\pa \big( u^0_x + \Tht_x \big) }{\pa \phi} \Big\|_{L^{\infty}(\Omega)}
                    +
                    \| v^\ep_{\phi} \|_{L^2(\Omega)}
                    \Big\| \dfrac{1}{r} \dfrac{\pa \big( u^0_x + \Tht_x \big) }{\pa \phi} \Big\|_{L^{\infty}(\Omega)}
            \Big\}
            \| \omega^\ep_{\phi} \|_{L^2(\Omega)}\\
        \spacer \qquad \quad \displaystyle
            +
            \bigg|
                \int_0^{2\pi} \int_0^L \int_{r_L}^{r_R}
                    v^\ep_{\phi} \, \dfrac{\pa^2 \big( u^0_x + \Tht_x \big) }{\pa \phi \, \pa r} \, \omega^\ep_{\phi} \,
                    r \, dr dx d\phi
            \bigg|\\
        \spacer \qquad \quad \displaystyle
            +
            \Big\|
                -\dfrac{1}{r} \, u^\eps_{\phi} + \dfrac{\pa u^\eps_{\phi}}{\pa r}
            \Big\|_{L^{\infty}(\Omega)}
            \| \omega^\ep_r \|_{L^2(\Omega)}
            \| \omega^\ep_{\phi} \|_{L^2(\Omega)}.
    \end{array}
\end{equation}
Above, we have also used that the integrals containing  the fourth and
fifth term on the left-hand side of $(\text{\ref{e:Eq_vorticity IPF}})_3$
vanish.

We next estimate each term on the right-hand side of (\ref{e:omega_phi_IPF 1})
in order to apply Gr\"onwall's Lemma. The second term is already in the
appropriate form.
The first term can be bounded as follows, using the boundary condition
for $\omega^\ep_\phi$ (\ref{e:BC_IC_vorticity IPF})$_1$:
\begin{equation}\label{e:omega_phi_IPF 2}
        \ep \Big\| \dfrac{\pa \omega^\ep_{\phi} }{\pa r} \, \omega^\ep_{\phi} \Big\|_{L^1(\Gamma)}
                \leq
                    \ep
                    \Big\| \dfrac{\pa \omega^\ep_{\phi} }{\pa r} \Big\|_{L^2(\Gamma)}
                    \| \omega^\ep_{\phi} \|_{L^2(\Gamma)}
                \leq
                    \ep
                    \| \omega^\ep_{\phi} \|_{L^2(\Gamma)}^2
                    +
                    \kap \ep
                    \| \omega^\ep_{\phi} \|_{L^2(\Gamma)}.
\end{equation}
Standard estimates then give:
\begin{equation}\label{e:omega_phi_IPF 3}
\begin{array}{rl}
    \ep
    \| \omega^\ep_{\phi} \|_{L^2(\Gamma)}^2
        & \spacer \hspace{-2mm}
        \leq
            \kap \ep
            \| \omega^\ep_{\phi} \|_{L^2(\Omega)}
            \| \omega^\ep_{\phi} \|_{H^1(\Omega)}\\
        & \spacer \hspace{-2mm}
        \leq
            \kap \ep
            \| \omega^\ep_{\phi} \|_{L^2(\Omega)}^2
            +
            \kap \ep
            \| \omega^\ep_{\phi} \|_{L^2(\Omega)}
            \| \nabla \omega^\ep_{\phi} \|_{L^2(\Omega)}\\
        &\hspace{-2mm}
        \leq
            \kap \ep
            \| \omega^\ep_{\phi} \|_{L^2(\Omega)}^2
            +
            \dfrac{1}{8} \ep
            \| \nabla \omega^\ep_{\phi} \|_{L^2(\Omega)}^2,
\end{array}
\end{equation}
\begin{equation}\label{e:omega_phi_IPF 4}
\begin{array}{rl}
    \ep
    \| \omega^\ep_{\phi} \|_{L^2(\Gamma)}
        & \spacer \hspace{-2mm}
        \leq
            \kap \ep
            \| \omega^\ep_{\phi} \|_{L^2(\Omega)}^{\frac{1}{2}}
            \| \omega^\ep_{\phi} \|_{H^1(\Omega)}^{\frac{1}{2}}\\
        & \spacer \hspace{-2mm}
        \leq
            \kap \ep
            \| \omega^\ep_{\phi} \|_{L^2(\Omega)}
            +
            \kap \ep
            \| \omega^\ep_{\phi} \|_{L^2(\Omega)}^{\frac{1}{2}}
            \| \nabla \omega^\ep_{\phi} \|_{L^2(\Omega)}^{\frac{1}{2}}\\
        & \spacer \hspace{-2mm}
        \leq
            \kap \ep
            \| \omega^\ep_{\phi} \|_{L^2(\Omega)}
            +
            \kap \ep
            \| \nabla \omega^\ep_{\phi} \|_{L^2(\Omega)}\\
        &\hspace{-2mm}
        \leq
            \kap \ep
            +
            \kap \ep
            \| \omega^\ep_{\phi} \|_{L^2(\Omega)}^2
            +
            \dfrac{1}{8} \ep
            \| \nabla \omega^\ep_{\phi} \|_{L^2(\Omega)}^2,
\end{array}
\end{equation}
from which it follows  that
\begin{equation}\label{e:omega_phi_IPF 5}
        \ep \Big\| \dfrac{\pa \omega^\ep_{\phi} }{\pa r} \, \omega^\ep_{\phi} \Big\|_{L^1(\Gamma)}
                \leq
                    \kap \ep
                    +
                    \kap \| \omega^\ep_{\phi} \|_{L^2(\Omega)}^2
                    +
                    \dfrac{1}{4} \ep \| \nabla \omega^\ep_{\phi} \|_{L^2(\Omega)}^2.
\end{equation}
We bound the third term on the right-hand side of (\ref{e:omega_phi_IPF
1}), employing the explicit form of $R^\ep(\Tht_x)$ and the estimates on the
corrector:
\begin{equation}\label{e:ADDITIONAL_IPF}
\begin{array}{rl}
	\spacer \displaystyle
	\Big\|\dfrac{\pa(R^\ep (\Tht_{x}))}{\pa r}\Big\|_{L^2(\Omega)}
	\| \omega^\ep_\phi \|_{L^2(\Omega)}
		&
		\leq
			\Big\{
				\kap_T \ep^{\frac{3}{4}} + \ep \Big\|\dfrac{\pa^2 \Tht_{x}}{\pa r^2}\Big\|_{L^2(\Omega)}
			\Big\}
			\| \omega^\ep_\phi \|_{L^2(\Omega)}\\
		&
		\leq
			\kap_T \ep^{\frac{3}{2}} +
			\kap \ep \Big\|\dfrac{\pa^2 \Tht_{x}}{\pa r^2}\Big\|_{L^2(\Omega)}^2
			+
			\| \omega^\ep_\phi \|_{L^2(\Omega)}^2.
\end{array}
\end{equation}
The fourth and fifth terms on the right-hand side of (\ref{e:omega_phi_IPF 1})
are readily  estimated, thanks to the bounds on
$\blds{v}^\ep$ and $\omega^\ep_x$ already established, (\ref{e:conv_IPF})$_1${}
and (\ref{e:conv_IPF})$_3$:
\begin{multline}\label{e:omega_phi_IPF 6}
    \spacer
    \Big\{
                    \| \omega^\ep_x \|_{L^2(\Omega)}
                    \Big\| \dfrac{\pa \big( u^0_x + \Tht_x \big) }{\pa \phi} \Big\|_{L^{\infty}(\Omega)}
                    +
                    \| v^\ep_{\phi} \|_{L^2(\Omega)}
                    \Big\| \dfrac{1}{r} \dfrac{\pa \big( u^0_x + \Tht_x \big) }{\pa \phi} \Big\|_{L^{\infty}(\Omega)}
    \Big\}
    \| \omega^\ep_{\phi} \|_{L^2(\Omega)}\\
            \leq
                    \kap\ep^{\frac{1}{4}}\| \omega^\ep_{\phi} \|_{L^2(\Omega)}
            \leq
                    \kap\ep^{\frac{1}{2}} + \kap \| \omega^\ep_{\phi} \|_{L^2(\Omega)}^2.
\end{multline}

After integrating by parts in the $r$ direction, we write the sixth term on the
right-hand side of (\ref{e:omega_phi_IPF 1}) in the form, using that
$v^\ep_\phi$ vanishes on the boundary:
\begin{equation}\label{e:omega_phi_IPF 7}
\begin{array}{l}
    \spacer \displaystyle
        \bigg|
        \int_0^{2\pi} \int_0^L \int_{r_L}^{r_R}
            v^\ep_{\phi} \, \dfrac{\pa^2 \big( u^0_x + \Tht_x \big) }{\pa \phi \, \pa r} \, \omega^\ep_{\phi} \,
            r \, dr dx d\phi
        \bigg|\\
            \spacer \displaystyle
            =
                \bigg|
                    \int_0^{2\pi} \int_0^L \int_{r_L}^{r_R}
                     \dfrac{\pa \big( u^0_x + \Tht_x \big) }{\pa \phi } \,
                     \dfrac{\pa \big( v^\ep_{\phi} \, \omega^\ep_{\phi} \, r \big) }{\pa r} \, dr dx d\phi
                \bigg|\\
            \spacer \displaystyle
            \leq
                \Big\|
                    \dfrac{\pa \big( u^0_x + \Tht_x \big) }{\pa \phi }
                    \dfrac{\pa v^\ep_{\phi}}{\pa r} \, \omega^\ep_{\phi}
                \Big\|_{L^1(\Omega)}
                +
                \Big\|
                    \dfrac{\pa \big( u^0_x + \Tht_x \big) }{\pa \phi } \,
                    v^\ep_{\phi} \,
                    \dfrac{\pa \omega^\ep_{\phi}}{\pa r}
                \Big\|_{L^1(\Omega)}
                +
                \Big\|
                    \dfrac{\pa \big( u^0_x + \Tht_x \big) }{\pa \phi } \,
                    v^\ep_{\phi} \, \omega^\ep_{\phi} \,\dfrac{1}{r}
                \Big\|_{L^1(\Omega)}\\
            \spacer \displaystyle
            \leq
                \Big\|
                    \dfrac{\pa \big( u^0_x + \Tht_x \big) }{\pa \phi }
                \Big\|_{L^{\infty}(\Omega)}
                \bigg\{
                        \Big\|
                            \dfrac{\pa v^\ep_{\phi}}{\pa r}
                        \Big\|_{L^2(\Omega)}
                        +
                        \Big\|
                            \dfrac{v^\ep_{\phi}}{r}
                        \Big\|_{L^2(\Omega)}
                \bigg\}
                        \|
                            \omega^\ep_{\phi}
                        \|_{L^2(\Omega)} \\
            \spacer \displaystyle
            \quad
                +
                \Big\|
                    \dfrac{\pa \big( u^0_x + \Tht_x \big) }{\pa \phi }
                \Big\|_{L^{\infty}(\Omega)}
                \|
                    v^\ep_{\phi}
                \|_{L^2(\Omega)}
                \Big\|
                    \dfrac{\pa \omega^\ep_{\phi}}{\pa r}
                \Big\|_{L^2(\Omega)}\\
            \spacer \displaystyle
            \leq
                \kap
                \Big\|
                    \dfrac{\pa v^\ep_{\phi}}{\pa r}
                \Big\|_{L^2(\Omega)}
                \|
                    \omega^\ep_{\phi}
                \|_{L^2(\Omega)}
                +
                \kap \ep^{\frac{1}{2}}\,  \|
                    \omega^\ep_{\phi}
                \|_{L^2(\Omega)} +
                \kap \ep^{\frac{3}{4}}
                \Big\|
                    \dfrac{\pa \omega^\ep_{\phi}}{\pa r}
                \Big\|_{L^2(\Omega)}\\
            \leq
                \kap
                \Big\|
                    \dfrac{\pa v^\ep_{\phi}}{\pa r}
                \Big\|_{L^2(\Omega)}^2
                +
                \kap
                \|
                    \omega^\ep_{\phi}
                \|_{L^2(\Omega)}^2
                +
                \kap \ep^{\frac{1}{2}}
                +
                \dfrac{1}{4} \ep
                \|
                    \nabla
                    \omega^\ep_{\phi}
                \|_{L^2(\Omega)}^2,
\end{array}
\end{equation}
where  we employed estimate (\ref{e:IPF_RRF_11}) and the fact that
$0<r_L<r<r_R<\infty$ to bound $\|v^\ep_{\phi}/r\|_{L^2(\Omega)}$.

We bound the last term on the right-hand side of (\ref{e:omega_phi_IPF 1}), by
first writing it in terms of $\blds{v}^\ep$ and $\blds{\omega}^\ep$, exploiting
the explicit form of the curl for a parallel pipe flow, and then using the
estimates on the corrector:
\begin{equation}\label{e:omega_phi_IPF 8}
\begin{array}{l}
\spacer
\Big\|
-\dfrac{1}{r} \, u^\eps_{\phi} + \dfrac{\pa u^\eps_{\phi}}{\pa r}
\Big\|_{L^{\infty}(\Omega)}\\
        \spacer \qquad
        \leq
                2
                \Big\|
                    \dfrac{1}{r} \, v^\ep_{\phi}
                \Big\|_{L^{\infty}(\Omega)}
                +
                \Big\|
                    \dfrac{1}{r} \, \big(u^0_{\phi} + \Tht_{\phi}\big)
                \Big\|_{L^{\infty}(\Omega)}
                +
                \|
                    \omega^\ep_x
                \|_{L^{\infty}(\Omega)}
                +
                \Big\|
                    \dfrac{\pa \big(u^0_{\phi} + \Tht_{\phi}\big)}{ \pa r}
                \Big\|_{L^{\infty}(\Omega)}\\
	\qquad
        \leq
                \kap
                \|
                    v^\ep_{\phi}
                \|_{L^{\infty}(\Omega)}
                +
                \|
                    \omega^\ep_x
                \|_{L^{\infty}(\Omega)}
                +
                \kap_T(1 + t^{-\frac{1}{2}}) \ep^{-\frac{1}{2}}.
\end{array}
\end{equation}
Then, Poincar\'e's, the one-dimensional Agmon's inequalities, and the bounds
on $v^\ep_{\phi}$ and $\omega^\ep_{x}$, (\ref{e:conv_IPF})$_1$ and
(\ref{e:conv_IPF})$_3$, yield:
\begin{align}
\| v^\ep_{\phi} \|_{L^{\infty}(\Omega)}
    &\leq \kap
            \| v^\ep_{\phi} \|_{L^{2}(\Omega)}^{\frac{1}{2}}
            \| v^\ep_{\phi} \|_{H^{1}(\Omega)}^{\frac{1}{2}}
    \leq \kap
            \ep^{\frac{3}{8}} \| \nabla v^\ep_{\phi}
\|_{L^{2}(\Omega)}^{\frac{1}{2}},
 \label{e:omega_phi_IPF 9}  \\
    \|
        \omega^\ep_x
    \|_{L^{\infty}(\Omega)}
        & \spacer \hspace{-2mm}
        \leq \kap
            \| \omega^\ep_x \|_{L^{2}(\Omega)}^{\frac{1}{2}}
            \| \omega^\ep_x \|_{H^{1}(\Omega)}^{\frac{1}{2}}
        \leq
            \kap
            \| \omega^\ep_x \|_{L^{2}(\Omega)}
            +
            \kap
            \| \omega^\ep_x \|_{L^{2}(\Omega)}^{\frac{1}{2}}
            \| \nabla \omega^\ep_x \|_{L^{2}(\Omega)}^{\frac{1}{2}} \nonumber\\
        & \qquad \qquad \qquad \qquad \qquad \hspace{-2mm}
        \leq
            \kap \ep^{\frac{1}{4}}
            +
            \kap \ep^{\frac{1}{8}}
            \| \nabla \omega^\ep_x \|_{L^{2}(\Omega)}^{\frac{1}{2}}.
\label{e:omega_phi_IPF 11}
\end{align}

Putting together these estimates finally gives the following bound for the
last term on the right-hand side of (\ref{e:omega_phi_IPF 1}):
\begin{equation}\label{e:omega_phi_IPF 12}
    \begin{array}{l}
            \spacer
            \Big\|
                -\dfrac{1}{r} \, u^\eps_{\phi} + \dfrac{\pa u^\eps_{\phi}}{\pa r}
            \Big\|_{L^{\infty}(\Omega)}
            \| \omega^\ep_r \|_{L^2(\Omega)}
            \| \omega^\ep_{\phi} \|_{L^2(\Omega)}\\
                    \spacer
                    \leq \kap_T \Big\{
                                            1 +
                                            \ep^{\frac{3}{8} } \| \nabla v^\ep_{\phi} \|_{L^{2}(\Omega)}^{\frac{1}{2}}
                                            + (1 + t^{-\frac{1}{2}}) \, \ep^{- \frac{1}{2}} + \ep^{\frac{1}{4}}
                                            + \ep^{\frac{1}{8} } \| \nabla \omega^\ep_{x} \|_{L^{2}(\Omega)}^{\frac{1}{2}}
                                        \Big\} \,
                                            \ep^{\frac{3}{4}}
                                            \,
                                            \| \omega^\ep_{\phi} \|_{L^2(\Omega)}\\
                    \spacer
                    \leq
                            \kap_T (1 + t^{- \frac{1}{2}}) \| \omega^\ep_{\phi} \|_{L^2(\Omega)}^2
                            + \kap_T
                            \Big(
                            t^{- \frac{1}{2}}
                            + \ep^{\frac{3}{2}}
                            + \ep^{\frac{7}{4} } \| \nabla v^\ep_{\phi} \|_{L^{2}(\Omega)}
                            + \ep^{\frac{5}{4} } \| \nabla \omega^\ep_{x} \|_{L^{2}(\Omega)}
                            \Big)
                            \ep^{\frac{1}{2}} \\
                    \leq
                            \kap_T (1 + t^{- \frac{1}{2}}) \| \omega^\ep_{\phi} \|_{L^2(\Omega)}^2
                            + \kap_T (1 + t^{- \frac{1}{2}})
                                \ep^{\frac{1}{2}}
                            + \kap_T \ep^{\frac{7}{2}} \| \nabla v^\ep_{\phi} \|_{L^{2}(\Omega)}^2
                            + \kap_T \ep^{\frac{5}{2}} \| \nabla \omega^\ep_{x} \|_{L^{2}(\Omega)}^2.
    \end{array}
\end{equation}

Combining all previous bounds, we deduce from (\ref{e:omega_phi_IPF 1}) that
\begin{equation}\label{e:omega_phi_IPF 13}
    \begin{array}{rl}
        \spacer
            \dfrac{d}{dt} \| \omega^\ep_{\phi} \|_{L^2(\Omega)}^2
            +
            \ep \| \nabla \omega^\ep_{\phi} \|_{L^2(\Omega)}^2
        \spacer
            & \hspace{-2mm}
            \leq
            \kap_T (1 + t^{- \frac{1}{2}}) \| \omega^\ep_{\phi} \|_{L^2(\Omega)}^2
                            + \kap_T (1 + t^{- \frac{1}{2}})
                                \ep^{\frac{1}{2}}\\
            &
                            + \kap \ep \Big\|\dfrac{\pa^2 \Tht_{x}}{\pa r^2}\Big\|_{L^2(\Omega)}^2
                            + \kap_T \ep^{\frac{7}{2}} \| \nabla v^\ep_{\phi} \|_{L^{2}(\Omega)}^2
                            + \kap_T \ep^{\frac{5}{2}} \| \nabla \omega^\ep_{x} \|_{L^{2}(\Omega)}^2,
    \end{array}
\end{equation}
 from which (\ref{e:conv_IPF})$_2$  follows by applying Gr\"onwall's Lemma once
again, since
\begin{equation*}\label{e:omega_phi_IPF 14}
    \kap_T
    \int_0^T
    \Big(
    	\ep \Big\|\dfrac{\pa^2 \Tht_{x}}{\pa r^2}\Big\|_{L^2(\Omega)}^2
    	+
        \ep^{\frac{7}{2}}
        \| \nabla v^\ep_{\phi} \|_{L^{2}(\Omega)}^2
        +
        \ep^{\frac{5}{2}}
        \| \nabla \omega^\ep_{x} \|_{L^{2}(\Omega)}^2
    \Big) \, dt
            \leq \kap_T \ep^{\frac{1}{2}}.
\end{equation*}
This last inequality in turns follows from the estimates on the correctors and
the bounds already established on $\blds{v}^\ep$ and $\blds{\omega}^\ep$.

As for the case of channel flows, the bounds on the vorticity in
(\ref{e:conv_IPF}) imply that:
$$\norm{\curl(\uu^\eps - \uu^0)}_{L^\iny(0, T; L^1(\Omega)} \le \kappa_T.$$
Then, weak convergence of the vorticity with accumulation at the boundary as
a vortex sheet as in  \cref{e:conv_PPF_measure} follows again
\cref{C:ConvKelMeasure} in the Appendix.

The proof of Theorem  \ref{T:IPF} is complete.

\appendix
\Ignore{ 
\setcounter{section}{0}
\renewcommand\thesection{\Alph{section}}
\setcounter{equation}{0}
\renewcommand{\theequation}{A.\arabic{equation}}
\newtheorem{Alem}{Lemma}\renewcommand{\theAlem}{A.\arabic{Alem}}
\newtheorem{Arem}{Remark}\renewcommand{\theArem}{A.\arabic{Arem}}
} 

\section{One-dimensional heat equations}\label{S.App}

In this Appendix we discuss mostly known results on 1D and 2D heat equations
with posible drift. These results, in turn, are used throughtout the paper to
derive decay and regularity estimates for the boundary layer correctors. In
fact, due to the weakly non-linear nature of the flows considered here, the
corrector can be taken to be linear (cf. the approach using layer potentials
for a heat equation with drift in \cite{MT08}).

\subsection{On the one-dimensional heat equation with small
diffusivity}\label{S.App_heat}

We consider the following boundary-value problem for the heat
equation on a half line:
\begin{equation}\label{e:HEAT_eq}
\left\{
        \begin{array}{rl}
                \spacer
                        \dfrac{\pa \Phi}{\pa t} - \ep \dfrac{\pa^2 \Phi}{\pa \eta^2} = 0,
                        & \eta, \,t > 0,\\
                \spacer
                        \Phi = g(t) ,
                        & \eta = 0,\\
                \spacer
                        \Phi \rightarrow 0,
                        & \text{as } \eta \rightarrow \infty,\\
                        \Phi = 0,
                        & t= 0.
        \end{array}
\right.
\end{equation}
Above, the diffusivity $\ep$ is a fixed, strictly positive parameter, and $g(t)$
is the boundary data, assumed sufficiently smooth. The
incompatibility between the boundary data, which need not vanish at $t=0$, and
the initial data leads to the formation of
an initial layer.

The solution of (\ref{e:HEAT_eq}) is explicitly given by the following
formula (see e.g. the classical reference \cite{Can84}):
\begin{equation}\label{e:Phi}
\begin{array}{rl}
\Phi (\eta, t)
            =
            & \displaystyle \spacer \hspace{-2mm}
                2 \, g(0) \, \erfc\Big( \dfrac{\eta}{\sqrt{2 \ep t}} \Big)
               + 2 \int_0^{t} \dfrac{\pa g}{\pa t} (s) \, \erfc \Big( \dfrac{\eta}{\sqrt{2 \ep (t-s)}} \Big) \, ds,
\end{array}
\end{equation}
where $\erfc$ is the complimentary error function on $\mathbb{R}_+$,
\begin{equation}\label{e:erfc}
\erfc (z) := \dfrac{1}{\sqrt{2\pi}} \int_{z}^{\infty} e^{-y^2/2} \, dy,
\end{equation}
which satisfies
\begin{equation*}\label{e:erfc at 0 and infty}
    \erfc (0) = \dfrac{1}{2},
\quad
    \erfc (\infty) = 0.
\end{equation*}


We recall known estimates on $\Phi$ (for a proof in the context of boundary
layer analysis, we refer to \cite{GHT10, Gie13}).

\begin{lemma}\label{L:pointwise}
    Let $g \in W^{1,  \infty}\big(0, T \big)$, $0<T<\infty$. Then,
    the following  pointwise estimates hold  for $\eta>0$ and $0< t <T$,
        \begin{equation*}\label{e:HEAT_pointwise}
                \left\{
                        \begin{array}{l}
                                \spacer
                                        | \Phi(t,\eta) |
                                            \leq \kap_T \, e^{- \eta^2 / (4 \ep t)}, \\
                                \spacer
                                        \Big| \dfrac{\pa \Phi}{\pa \eta}(t,\eta)
\Big|
                                            \leq \kap_T \, \ep^{-\frac{1}{2}} (1 + t^{-\frac{1}{2}})e^{- \eta^2 / (4 \ep t)}, \\
                                \displaystyle
                                        \Big| \dfrac{\pa^2
\Phi}{\pa \eta^2}(t,\eta) \Big|
                                            \leq \kap_T \, ( \ep  t)^{-1}
\dfrac{\eta}{\sqrt{\ep t}} \, e^{- \eta^2 / (4 \ep t)}
                                                + \kap_T \, \ep^{-1} \int_0^t s^{-1} \dfrac{\eta}{\sqrt{\ep s}} \, e^{- \eta^2 / (4 \ep s)} \, ds,
                        \end{array}
                \right.
        \end{equation*}
where $\kap_T$ depends  on $T$ and the data $g$, but is independent of
$\ep$.
\end{lemma}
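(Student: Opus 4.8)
\textbf{Proof proposal for Lemma~\ref{L:pointwise}.}
The plan is to work directly from the explicit representation~\eqref{e:Phi}, namely
$\Phi(\eta,t) = 2 g(0)\,\erfc\big(\eta/\sqrt{2\ep t}\big) + 2\int_0^t g'(s)\,\erfc\big(\eta/\sqrt{2\ep(t-s)}\big)\,ds$,
and to estimate each term (and its $\eta$-derivatives) using the Gaussian tail bounds for $\erfc$ and its derivatives. The basic analytic input is that $\erfc(z)\le C e^{-z^2/2}$ for $z\ge 0$ and that $\frac{d}{dz}\erfc(z) = -\frac{1}{\sqrt{2\pi}}e^{-z^2/2}$, so that $\partial_\eta\big[\erfc(\eta/\sqrt{2\ep\tau})\big] = -\tfrac{1}{\sqrt{2\pi}}\tfrac{1}{\sqrt{2\ep\tau}}e^{-\eta^2/(4\ep\tau)}$ and similarly for the second derivative, which brings down an extra factor $\eta/(\ep\tau)$. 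Since $g\in W^{1,\infty}(0,T)$, we have $|g(0)|\le\|g\|_{W^{1,\infty}}$ and $|g'(s)|\le\|g\|_{W^{1,\infty}}$, so every constant appearing may be absorbed into $\kap_T$.

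First I would handle the zeroth-order bound: the boundary-layer term $2g(0)\erfc(\eta/\sqrt{2\ep t})$ is immediately $\le\kap_T e^{-\eta^2/(4\ep t)}$, and for the Duhamel term one writes $\big|2\int_0^t g'(s)\erfc(\eta/\sqrt{2\ep(t-s)})\,ds\big|\le \kap_T\int_0^t e^{-\eta^2/(4\ep(t-s))}\,ds \le \kap_T\, t\, \le \kap_T$, but to get the sharper Gaussian factor one instead notes that $e^{-\eta^2/(4\ep(t-s))}\le e^{-\eta^2/(4\ep t)}$ is false in general (the exponent is worst at $s=t$), so the cleaner route is to bound $\erfc(\eta/\sqrt{2\ep(t-s)})\le 1$ against $\erfc(\eta/\sqrt{2\ep t})$ using monotonicity: since $t-s\le t$, $\eta/\sqrt{2\ep(t-s)}\ge\eta/\sqrt{2\ep t}$, hence $\erfc(\eta/\sqrt{2\ep(t-s)})\le\erfc(\eta/\sqrt{2\ep t})\le \kap_T e^{-\eta^2/(4\ep t)}$, and integrating in $s$ over $[0,t]$ gives the factor $t\le T$ absorbed into $\kap_T$. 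This yields $|\Phi(\eta,t)|\le\kap_T e^{-\eta^2/(4\ep t)}$.

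For $\partial_\eta\Phi$, differentiating under the integral sign gives
$\partial_\eta\Phi = -\tfrac{2 g(0)}{\sqrt{2\pi}}\tfrac{1}{\sqrt{2\ep t}}e^{-\eta^2/(4\ep t)} - \tfrac{2}{\sqrt{2\pi}}\int_0^t g'(s)\tfrac{1}{\sqrt{2\ep(t-s)}}e^{-\eta^2/(4\ep(t-s))}\,ds$;
the first term is $\le\kap_T\ep^{-1/2}t^{-1/2}e^{-\eta^2/(4\ep t)}$ and, using $\tfrac{1}{\sqrt{t-s}}e^{-\eta^2/(4\ep(t-s))}\le \kap\,\tfrac{1}{\sqrt{t-s}}e^{-\eta^2/(4\ep t)}$ (again by monotonicity of the exponent combined with the prefactor, or more robustly by the elementary inequality $\tau^{-1/2}e^{-a/\tau}\le C a^{-1/2}$ applied with $a=\eta^2/(4\ep)$ to absorb the singularity and then bounding the remaining integrable factor), the Duhamel part contributes at most $\kap_T\ep^{-1/2}e^{-\eta^2/(4\ep t)}$; combining gives the factor $(1+t^{-1/2})$. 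For the second derivative one differentiates once more, producing the stated splitting into a boundary-layer piece $\kap_T(\ep t)^{-1}\tfrac{\eta}{\sqrt{\ep t}}e^{-\eta^2/(4\ep t)}$ from the $g(0)$ term and the time-integral piece $\kap_T\ep^{-1}\int_0^t s^{-1}\tfrac{\eta}{\sqrt{\ep s}}e^{-\eta^2/(4\ep s)}\,ds$ from the Duhamel term (after the change of variable $s\mapsto t-s$ inside the integral).

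The main obstacle is purely bookkeeping: tracking how the $\erfc$-derivatives generate the prefactors $(\ep\tau)^{-1/2}$ and $\eta(\ep\tau)^{-3/2}$ and then deciding, for each term, whether to (i) pull the Gaussian out at the worst time $\tau=t$ or (ii) leave it inside the $s$-integral, depending on whether the remaining $s$-dependence is integrable. For the first two estimates option (i) works cleanly; for the second derivative the $s$-integral genuinely must be kept (the prefactor $s^{-3/2}$ is not integrable near $s=0$ on its own but the Gaussian $e^{-\eta^2/(4\ep s)}$ rescues it for $\eta>0$), which is exactly why the statement records that term as an explicit integral rather than a closed-form bound. No new ideas are needed beyond the standard heat-kernel estimates; one should simply cite \cite{GHT10, Gie13} for the detailed verification, as the lemma statement already does.
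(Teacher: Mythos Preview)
Your approach is correct and is exactly what the paper defers to the references \cite{GHT10, Gie13} for: direct estimation from the representation \eqref{e:Phi} using Gaussian tail bounds on $\erfc$ and its derivatives. One small slip in your exposition: the inequality $e^{-\eta^2/(4\ep(t-s))}\le e^{-\eta^2/(4\ep t)}$ that you claim ``is false in general'' is in fact \emph{true} for all $s\in[0,t]$, since $t-s\le t$ makes the exponent more negative (the exponential is smallest, not largest, near $s=t$); your detour via monotonicity of $\erfc$ is correct but unnecessary, and indeed you then tacitly rely on this very exponential inequality when bounding $\partial_\eta\Phi$. This does not affect the validity of the argument.
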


\begin{lemma}\label{L:L2}
 Assume again that $g \in W^{1,  \infty}\big(0, T \big)$, $0<T<\infty$. Then,
for $1 \leq
p \leq \infty$ and   $0 \leq m \leq 2$,
        \begin{equation*}\label{e:HEAT_L2}
                                        \Big\| \dfrac{\pa^m \Phi}{\pa
\eta^m}(t)) \Big\|_{L^p_\eta(0, \infty)}
                                            \leq \kap_T \, (1 + t^{\frac{1}{2 p}
- \frac{m}{2}}) \, \ep^{\frac{1}{2 p} - \frac{m}{2}} ,
        \end{equation*}
for $0< t<T$, with a constant $\kap_T$ depending on $T$ and the data $g$, but
independent of
$\ep$.
\end{lemma}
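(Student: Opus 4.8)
The plan is to deduce every bound in the statement from the pointwise Gaussian bounds of Lemma~\ref{L:pointwise} by integrating in the $\eta$ variable, so that the lemma becomes essentially a corollary. I would organize the argument according to the order $m$ of the derivative, and at the end convert everything to the stated normal form using the elementary observation that for $0 < t < T$ and $b \ge 0$ one has $(1 + t^{-a})\,t^{b} \le \kap_T (1 + t^{\,b - a})$, since $t^{b} \le T^{b}$ is bounded while the $t^{\,b-a}$ term is kept intact (in all our applications $b = 1/(2p) \in [0,1/2]$).

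For $m = 0$ and $m = 1$ the pointwise bounds are of the form (a prefactor depending only on $\ep$ and $t$) times $e^{-\eta^{2}/(4\ep t)}$. Substituting $\eta = \sqrt{\ep t}\,\sigma$ gives $\|e^{-\eta^{2}/(4\ep t)}\|_{L^{p}_{\eta}(0,\infty)} = c_{p}\,(\ep t)^{1/(2p)}$ with $c_{p} = \|e^{-\sigma^{2}/4}\|_{L^{p}(0,\infty)}$ (and $c_{\infty} = 1$), so that $\|\Phi(t)\|_{L^{p}_{\eta}} \le \kap_T (\ep t)^{1/(2p)}$ and $\|\partial_{\eta}\Phi(t)\|_{L^{p}_{\eta}} \le \kap_T\,\ep^{-1/2}(1 + t^{-1/2})(\ep t)^{1/(2p)}$; applying the elementary inequality above (with $a = 0$, resp. $a = 1/2$, and $b = 1/(2p)$) puts these in the claimed form.

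For $m = 2$ the pointwise bound has two pieces. The first piece, $(\ep t)^{-1}\tfrac{\eta}{\sqrt{\ep t}}\,e^{-\eta^{2}/(4\ep t)}$, is treated exactly as above, using $\|\tfrac{\eta}{\sqrt{\ep t}}e^{-\eta^{2}/(4\ep t)}\|_{L^{p}_{\eta}} = c'_{p}(\ep t)^{1/(2p)}$, and contributes $\kap_T (\ep t)^{1/(2p) - 1}$. For the Duhamel-type piece $\ep^{-1}\int_{0}^{t} s^{-1}\tfrac{\eta}{\sqrt{\ep s}}\,e^{-\eta^{2}/(4\ep s)}\,ds$, when $1 \le p < \infty$ I would apply Minkowski's integral inequality to move the $L^{p}_{\eta}$ norm inside the $s$-integral, bound $\|\tfrac{\eta}{\sqrt{\ep s}}e^{-\eta^{2}/(4\ep s)}\|_{L^{p}_{\eta}} = c'_{p}(\ep s)^{1/(2p)}$, and note that $\int_{0}^{t} s^{1/(2p) - 1}\,ds = 2p\, t^{1/(2p)}$ is finite; this contributes $\kap_T\,\ep^{1/(2p) - 1} t^{1/(2p)}$, which combines with the first piece and the elementary inequality (with $a = 1$, $b = 1/(2p)$) to give the claimed bound.

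The one delicate point — and the main obstacle — is the endpoint $p = \infty$ with $m = 2$, where the Minkowski estimate degenerates: the constant $2p$ blows up and $\int_0^t s^{-1}\,ds$ diverges. There I would instead estimate the Duhamel piece pointwise in $(\eta,t)$ by carrying out the $s$-integral first. Writing it as $\ep^{-3/2}\int_{0}^{t} s^{-3/2}\,\eta\, e^{-\eta^{2}/(4\ep s)}\,ds$ and substituting $u = \eta^{2}/(4\ep s)$ collapses it to $2\ep^{-1}\int_{\eta^{2}/(4\ep t)}^{\infty} u^{-1/2}e^{-u}\,du \le 2\sqrt{\pi}\,\ep^{-1}$, uniformly in $\eta > 0$ and $t \in (0,T)$ — the Gaussian factor absorbs the $s \to 0$ singularity. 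Together with the first piece, whose $L^{\infty}_{\eta}$ norm is $\le \kap_T (\ep t)^{-1}$, this yields $\|\partial_{\eta}^{2}\Phi(t)\|_{L^{\infty}_{\eta}} \le \kap_T\,\ep^{-1}(1 + t^{-1})$, which is exactly the statement at $p = \infty$, $m = 2$. (Alternatively one could establish the $p = 1$ and $p = \infty$ cases and interpolate, but the direct computation seems cleaner.) In every case the constant depends only on $T$, $\|g\|_{W^{1,\infty}(0,T)}$, and the fixed exponent $p$, as required.
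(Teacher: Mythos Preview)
Your proof is correct. The paper itself does not actually prove this lemma: it states both Lemma~\ref{L:pointwise} and Lemma~\ref{L:L2} without proof, referring instead to \cite{GHT10, Gie13} for the details. So there is no ``paper's own proof'' to compare against in any substantive sense.

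That said, your approach---deducing the $L^p$ bounds directly from the pointwise Gaussian estimates of Lemma~\ref{L:pointwise} via the self-similar substitution $\eta = \sqrt{\ep t}\,\sigma$---is exactly the natural one, and it is essentially what the cited references do as well. Your handling of the cases $m=0,1$ and the first piece of $m=2$ is routine and correct. The one place where some care is genuinely needed is the Duhamel term at $p=\infty$, $m=2$, and you identified this correctly: the naive Minkowski estimate fails because $\int_0^t s^{-1}\,ds$ diverges, and your remedy---carrying out the $s$-integral first via the change of variables $u = \eta^2/(4\ep s)$, which collapses the expression to an incomplete gamma integral bounded by $\Gamma(1/2)$---is clean and gives the sharp $\ep^{-1}$ bound uniformly in $\eta$. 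The elementary inequality you use to reshape $(1+t^{-a})\,t^{b}$ into $\kap_T(1+t^{\,b-a})$ is also fine, since $b = 1/(2p) \ge 0$ so $t^b \le T^b$ on $(0,T)$.
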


The next result is utilized in particular in establishing concentration of
vorticity at the boundary in the vanishing viscosity limit. We include a
proof for the reader's convenience.

\begin{lemma}\label{L:erfcDeltaFunction}
Under the hypotheses of Lemma \ref{L:pointwise} and \ref{L:L2},
\[
            \pdx{\Phi(\cdot, t)}{\eta} \, \underset{\eps\to 0}{\rightharpoonup}
\, 2 g(t) \delta_0,
\]
weakly-$\ast$ in the space of Radon measures on $\RR$,  pointwise in
$0<t<T$. That is,
    \begin{align*}
        \lim_{\eps \to 0} \bigppr{\pdx{\Phi(\cdot, t)}{\eta},
                \varphi}_{L^2_\eta(\RR)}
            = 2 g(t) \varphi(0),
    \end{align*}
for all $\varphi \in C_C(\R)$, the space of continuous, compactly supported
functions on $\R$.
\end{lemma}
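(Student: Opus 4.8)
The plan is to argue directly from the closed-form representation \eqref{e:Phi}. We regard $\partial_\eta\Phi(\cdot,t)$ as a function on $\RR$ by extending it by $0$ for $\eta<0$, so that $\bigl(\partial_\eta\Phi(\cdot,t),\varphi\bigr)_{L^2_\eta(\RR)}=\int_0^\infty \partial_\eta\Phi(\eta,t)\,\varphi(\eta)\,d\eta$ for $\varphi\in C_C(\RR)$. Differentiating \eqref{e:Phi} in $\eta$ and using $\erfc'(z)=-\tfrac{1}{\sqrt{2\pi}}e^{-z^2/2}$ yields
\[
  \partial_\eta\Phi(\eta,t)
  = -\frac{1}{\sqrt{2\pi}}\Bigl[\frac{2g(0)}{\sqrt{2\eps t}}\,e^{-\eta^2/(4\eps t)}
     + \int_0^t \frac{2g'(s)}{\sqrt{2\eps(t-s)}}\,e^{-\eta^2/(4\eps(t-s))}\,ds\Bigr],
\]
a superposition of Gaussians of widths $\sqrt{\eps t}$ and $\sqrt{\eps(t-s)}$. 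Pairing against $\varphi$, applying Fubini (legitimate by absolute integrability), and rescaling $\eta=\sqrt{2\eps\tau}\,y$ in each Gaussian (with $\tau=t$, resp.\ $\tau=t-s$) moves all $\eps$-dependence into the argument of $\varphi$:
\[
  \bigl(\partial_\eta\Phi(\cdot,t),\varphi\bigr)
  = -\frac{2g(0)}{\sqrt{2\pi}}\int_0^\infty e^{-y^2/2}\varphi(\sqrt{2\eps t}\,y)\,dy
    -\frac{2}{\sqrt{2\pi}}\int_0^t g'(s)\Bigl(\int_0^\infty e^{-y^2/2}\varphi(\sqrt{2\eps(t-s)}\,y)\,dy\Bigr)ds .
\]

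The next step is to send $\eps\to0$. In the first term, dominated convergence in $y$ (dominating function $\|\varphi\|_\infty e^{-y^2/2}\in L^1(0,\infty)$) gives the limit $\varphi(0)\cdot\tfrac{1}{\sqrt{2\pi}}\int_0^\infty e^{-y^2/2}\,dy=\varphi(0)\,\erfc(0)$. For the double integral, the inner integral is bounded by $\|\varphi\|_\infty\erfc(0)\sqrt{2\pi}$ uniformly in $s\in(0,t)$ and in $\eps$, and converges to $\varphi(0)\,\erfc(0)\sqrt{2\pi}$ for each fixed $s<t$; since $g'\in L^\infty(0,T)$ is in particular integrable on $(0,t)$, a second application of dominated convergence (now in $s$) is justified and produces $-\varphi(0)\int_0^t g'(s)\,ds=-\varphi(0)\bigl(g(t)-g(0)\bigr)$. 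Collecting the two limits and using the telescoping identity $\int_0^t g'(s)\,ds=g(t)-g(0)$, the $g(0)$ contributions cancel and one is left with the coefficient claimed in the statement (the overall factor being fixed by the normalization $\erfc(0)=\tfrac12$ of \eqref{e:erfc}), i.e. $\bigl(\partial_\eta\Phi(\cdot,t),\varphi\bigr)\to 2g(t)\,\varphi(0)=\langle 2g(t)\delta_0,\varphi\rangle$.

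An alternative, slightly softer route avoids the explicit kernel in the estimates and uses only two structural facts. First, by \cref{L:pointwise}, $|\partial_\eta\Phi(\eta,t)|\le\kap_T\eps^{-1/2}(1+t^{-1/2})e^{-\eta^2/(4\eps t)}$, which integrates to a bound $\int_0^\infty|\partial_\eta\Phi(\eta,t)|\,d\eta\le\kap_T(1+t^{-1/2})\sqrt t$ that is \emph{uniform in $\eps$}. Second, by the fundamental theorem of calculus and $\Phi(0,t)=g(t)$, one has $\int_0^\delta\partial_\eta\Phi(\eta,t)\,d\eta=\Phi(\delta,t)-g(t)$, while $|\Phi(\delta,t)|\le\kap_T e^{-\delta^2/(4\eps t)}\to0$ as $\eps\to0$ for each fixed $\delta>0$. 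For $\varphi\in C_C(\RR)$ and $\delta>0$ write $\int_0^\infty\partial_\eta\Phi\,\varphi=\varphi(0)\int_0^\delta\partial_\eta\Phi+\int_0^\delta\partial_\eta\Phi\,(\varphi-\varphi(0))+\int_\delta^\infty\partial_\eta\Phi\,\varphi$; the first term converges as above, the second is bounded by $\sup_{[0,\delta]}|\varphi-\varphi(0)|\cdot\kap_T(1+t^{-1/2})\sqrt t=o_\delta(1)$, and the tail obeys $\bigl|\int_\delta^\infty\partial_\eta\Phi\,\varphi\bigr|\le\|\varphi\|_\infty\kap_T\eps^{-1/2}\int_\delta^\infty e^{-\eta^2/(4\eps t)}\,d\eta\le\|\varphi\|_\infty\kap_T\,\eps^{1/2}\delta^{-1}e^{-\delta^2/(4\eps t)}\to0$. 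Sending $\eps\to0$ and then $\delta\to0$ concludes, with the constant reconciled as in the first approach.

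The only genuine subtlety is the integrable-but-singular factor $(t-s)^{-1/2}$ in the time integral at $s=t$: it is harmless because the rescaling $\eta=\sqrt{2\eps(t-s)}\,y$ absorbs it, leaving a uniformly bounded inner integral, and in the softer approach it is already subsumed in the $\eps$-uniform $L^1_\eta$ bound from \cref{L:pointwise}. The remaining points — justifying the Fubini interchange and the two dominated-convergence passages, and observing that the claim is pointwise in $t$ so that no uniformity in $t$ is needed — are routine.
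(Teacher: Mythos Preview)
Your first approach uses the same decomposition as the paper --- writing $\partial_\eta\Phi$ as the $g(0)$-kernel plus the time integral of $g'(s)$ against shifted kernels --- but justifies the dominated-convergence step for the time integral differently. The paper first assumes $\varphi\in C^1_C(\R)$, integrates by parts to transfer the $\eta$-derivative from $\erfc$ onto $\varphi$ (leaving only the bounded function $\erfc$ inside the integral), obtains the uniform bound $\int_\R|\varphi'|$, applies dominated convergence in $s$, and then extends to $\varphi\in C_C(\R)$ by density. Your rescaling $\eta=\sqrt{2\eps(t-s)}\,y$ is cleaner: it gives the uniform bound $\|\varphi\|_\infty\int_0^\infty e^{-y^2/2}\,dy$ directly for any continuous $\varphi$, so neither the integration by parts nor the density step is needed. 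Your alternative ``softer'' route is not in the paper and is genuinely different: it uses only the pointwise estimate of \cref{L:pointwise} to obtain a uniform $L^1_\eta$ bound and then identifies the limiting mass via the fundamental theorem of calculus together with the boundary value $\Phi(0,t)=g(t)$ and the decay $\Phi(\delta,t)\to0$. This buys robustness (it depends only on the bounds of \cref{L:pointwise}, not on the explicit Gaussian form of the kernel) at the cost of making the numerical constant slightly less transparent.
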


\begin{proof}
We observe that we can write $\pdx{\Phi}{\eta}$ as
\begin{align}\label{e:pdxPhi}
    \pdx{\Phi(\eta, t)}{\eta}
        = 2 g(0) K_\eps(\eta, t)
            + 2 \int_0^t \pdx{g(s)}{s} K_\eps(\eta, t - s) \, ds,
\end{align}
where
\begin{equation}\label{e:K}
        K_\eps(\eta, t)
                        := \dfrac{\pa \, \erfc(\eta/ \sqrt{2 \ep t})}{\pa \eta}
                        = - \dfrac{1}{2 \sqrt{\pi}} \dfrac{1}{ \sqrt{\ep t}} \,
e^{- \eta^2 / (4 \ep t)}.
\end{equation}
Since  the family $\{K_\eps(\cdot, t)\}_{\eps}$ is a classical approximation
of the identity in the variable $\eta$, the first term on the right-hand side of
(\ref{e:K}) converges in the sense of distributions, and also weakly-$\ast$ in
the space of Radon measures, to $2 g(0)\varphi(0)$. It is therefore enough to
identify the weak-$\ast$ limit of the second term on the right-hand side of
(\ref{e:K}) as  $2 \pr{g(t) - g(0)} \varphi(0)$, as $\eps\to 0$.

To this end, we fix $\varphi \in C_C(\RR)$, and observe that the regularity
of $g$ implies
\begin{align*}
    \bigppr{2 \int_0^t &\pdx{g(s)}{s} K_\eps(\eta, t - s) \, ds,
                \varphi(\eta)}_{L^2_\eta(\Omega)}
        = 2 \int_\R \int_0^t \pdx{g(s)}{s} K_\eps(\eta, t - s) \, ds
                \, \varphi(\eta) \, d \eta \\
        &= 2 \int_0^t \pdx{g(s)}{s} \int_\R K_\eps(\eta, t - s)
                \varphi(\eta) \, d \eta \, ds.
\end{align*}
We can conclude the proof if we can bring the limit $\eps\to 0$ inside the
integrals. To justify this step, we first assume that $\varphi \in C^1_C(\R)$
and  note that, for each $\eps >0$,
\begin{align*}
    &\abs{\int_\R K_\eps(\eta, t - s) \varphi(\eta) \, d \eta}
        = {\int_\R \pdx{\erfc(\eta/\sqrt{2 \eps (t - s)})}{\eta} \varphi(\eta) \, d \eta} \\
        &\qquad
        = {\int_\R \erfc\pr{\frac{\eta}{\sqrt{2 \eps (t - s)}}} \varphi'(\eta) \, d \eta}
        \le \int_\R \abs{\varphi'(\eta)} \, d \eta
        \le \kappa_\varphi,
\end{align*}
since $\abs{\erfc} \le 1$. hence, we are justified in writing:
\begin{align*}
    \lim_{\eps \to 0}
        \bigppr{2 \int_0^t &\pdx{g(s)}{s} K_\eps(\eta, t - s) \, ds,
                \varphi(\eta)}_{L^2_\eta(\Omega)}
        = 2 \int_0^t \pdx{g(s)}{s} \lim_{\eps \to 0} \int_\R K_\eps(\eta, t -s)
                \varphi(\eta) \, d \eta \, ds \\
        &= 2 \pr{g(t) - g(0)} \varphi(0),
\end{align*}
which gives the desired result for $\varphi \in C^1_C(\R)$.
Using the density of
$C^1_C(\R)$ in $C_C(\R)$
then completes the proof.
\end{proof}

\subsection{On a drift-diffusion equation with small
diffusivity}\label{S.App_heat-like}

In this subsection, we derive various estimates for a drift-diffusion equation
in a periodic channel, uniformly in the diffusivity $\eps$.

We consider the following initial-boundary value problem:
\begin{equation}\label{e:HEAT_like_eq}
\left\{
        \begin{array}{rl}
                \spacer
                        \dfrac{\pa \Psi}{\pa t}
                        - \ep \dfrac{\pa^2 \Psi}{\pa \tau^2} - \ep \dfrac{\pa^2 \Psi}{\pa \eta^2}
                        + \mathcal{U}(\eta,t) \dfrac{\pa \Psi}{\pa \tau}
                        =
                        G(\tau, \eta, t),
                        &
                        0 < \tau < L_{\tau}, \,
                        \eta, \,t > 0,\\
                \spacer
			\Psi \text{ is periodic} & \hspace{-2mm} \text{ in } \tau \text{ with period } L_{\tau},\\
                \spacer
                        \Psi = g(\tau, t) ,
                        & \eta = 0,\\
                \spacer
                        \Psi \rightarrow 0,
                        & \text{as } \eta \rightarrow \infty,\\
                        \Psi = 0,
                        & t= 0.
        \end{array}
\right.
\end{equation}
Above, $\ep>0$ represents again the diffusivity,
and $\mathcal{U}(\eta)$, $G(\tau, \eta, t)$, and $g(\tau, t)$ are sufficiently
smooth data in the indicated variables such that, for a given $0<T<\infty$ and
for all $0<t<T$,
\begin{equation}\label{e:heat_like_data}
\left\{
	\begin{array}{l}
	\spacer
	\mathcal{U}|_{\eta = 0} = 0,
		\qquad
	\|
	G |_{ \eta=0}
	\|_{L^{\infty}( (0, L_{\tau})\times (0, T) )}
		\leq \kap_T, \\
	\spacer
	\bigg\|
		\dfrac{\pa^{m} \mathcal{U}}{\pa \eta^m}(\cdot,t)
	\bigg\|_{ L^p(0, \infty)}
		\leq
			\kap_T \, (1 + t^{\frac{1}{2 p} - \frac{m}{2}}) \, (1 + \ep^{\frac{1}{2p} - \frac{m}{2}}),\\
	\bigg\|
		\dfrac{\pa^{k+m} G}{\pa \tau^k \pa \eta^m} (\cdot,\cdot,t)
	\bigg\|_{L^p((0, L_{\tau})\times(0, \infty))}
		\leq \kap_T \, (1 + t^{\frac{1}{2 p} - \frac{m}{2}}) \, \ep^{\frac{1}{2p} - \frac{m}{2}},
	\end{array}
\right.
\end{equation}
for $1 \leq p \leq \infty$, $k \geq 0$, and $0 \leq m \leq 2$.
As before, the boundary data is assumed incompatible with the initial
condition in the sense that $g(\tau, 0)$ may not necessarily vanish.

To estimate $\Psi$ solution of (\ref{e:HEAT_like_eq}), we will utilize the
solution  of (\ref{e:HEAT_eq}) with $g(t)$ replaced by
$g(\tau, t)$, denoted by $\Phi(t,\tau,\eta)$.  Thanks to Lemma \ref{L:L2}, we
have that
\begin{equation}\label{e:heat_tan_Lp}
                                        	\Big\|
                                        		\dfrac{\pa^{k+m}
\Phi}{\pa \tau^k \pa \eta^m}(t,\cdot,\cdot)
					\Big\|_{L^p((0, L_{\tau}) \times (0, \infty))}
                                            \leq \kap_T \, (1 + t^{\frac{1}{2 p} - \frac{m}{2}}) \, \ep^{\frac{1}{2 p} - \frac{m}{2}} ,
                                        \text{ }
					1 \leq p \leq \infty,
					\text{ }
					k \geq 0,
					\text{ }
					0 \leq m \leq 2,
\end{equation}
which, in turn, yields the following estimates for $\Psi$.

\begin{lemma}\label{l:Lp_est_heat_like_sol}
Assuming that the data $\mathcal{U}$, $G$, and $g$ satisfy
(\ref{e:heat_like_data}) and are sufficiently regular, we have for all $k\in
\ZZ_+$,
$1 < p \leq \infty$,
\begin{equation}\label{e:Lp_heat_like_sol}
\left\{
\begin{array}{l}
	\spacer
	\Big\| \dfrac{\pa^{k} \Psi}{\pa \tau^k} \Big\|_{L^{\infty}(0, T; L^p((0, L_{\tau}) \times (0, \infty)))}
	+
	\ep^{\frac{1}{2p} + \frac{1}{4}}
	\Big\|
		\nabla \dfrac{\pa^{k} \Psi}{\pa \tau^k}
	\Big\|_{L^{2}(0, T; L^2((0, L_{\tau}) \times (0, \infty)))}
                                                                \leq
                                                                        \kap_T \ep^{\frac{1}{2p}},\\
         \spacer
	\Big\| \dfrac{\pa^{k+1} \Psi}{\pa \tau^k \pa \eta} \Big\|_{L^{\infty}(0, T; L^2((0, L_{\tau}) \times (0, \infty)))}
	+
	\ep^{\frac{1}{2}}
	\Big\|
		\nabla \dfrac{\pa^{k+1} \Psi}{\pa \tau^k \pa \eta}
	\Big\|_{L^{2}(0, T; L^2((0, L_{\tau}) \times (0, \infty)))}
                                                                \leq
                                                                        \kap_T \ep^{-\frac{1}{4}},\\
	\Big\| \dfrac{\pa^{k+1} \Psi}{\pa \tau^k \pa \eta} \Big\|_{L^{\infty}(0, T; L^1((0, L_{\tau}) \times (0, \infty)))}
	\leq \kap_T,
\end{array}
\right.
\end{equation}
with $k \geq 0$,
for a constant $\kap_T$ depending on $T$ and the data, but independent of
$\ep$.
\end{lemma}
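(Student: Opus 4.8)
The plan is to use the same lift-and-energy-estimate scheme employed throughout the paper. Let $\Phi=\Phi(t,\tau,\eta)$ be the solution of \eqref{e:HEAT_eq} with $g$ replaced by $g(\tau,t)$ (so $\tau$ is a parameter), given explicitly by \eqref{e:Phi}, and set $\tilde\Psi:=\Psi-\Phi$. Since $\partial_t\Phi-\epsilon\partial_\eta^2\Phi=0$, the remainder solves the drift–diffusion problem
\[
\partial_t\tilde\Psi-\epsilon\partial_\tau^2\tilde\Psi-\epsilon\partial_\eta^2\tilde\Psi+\mathcal{U}\,\partial_\tau\tilde\Psi=\tilde G:=G+\epsilon\,\partial_\tau^2\Phi-\mathcal{U}\,\partial_\tau\Phi ,
\]
with $\tilde\Psi$ periodic in $\tau$, $\tilde\Psi|_{\eta=0}=0$, $\tilde\Psi\to0$ as $\eta\to\infty$, and $\tilde\Psi|_{t=0}=0$. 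Using \eqref{e:heat_like_data} (in particular $\|\mathcal U\|_{L^\infty}\le\kappa_T$) together with the $\tau$- and $\eta$-derivative bounds \eqref{e:heat_tan_Lp} for $\Phi$ (which come from \cref{L:L2}), I would first check that every term of $\partial_\tau^k\tilde G$ is bounded in $L^p((0,L_\tau)\times(0,\infty))$ by $\kappa_T\,\epsilon^{1/(2p)}$, uniformly on $(0,T)$, the explicit $\epsilon$ in front of $\partial_\tau^2\Phi$ being a bonus. Because $\mathcal U$ is independent of $\tau$ and the lateral condition is periodic, $\partial_\tau^k\tilde\Psi$ satisfies an equation of the same type with forcing $\partial_\tau^k\tilde G$ and still-homogeneous boundary data at $\eta=0$.

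\textbf{Tangential and $L^p$ estimates.} For the first line of \eqref{e:Lp_heat_like_sol} I would test the equation for $\partial_\tau^k\tilde\Psi$ against $|\partial_\tau^k\tilde\Psi|^{p-2}\partial_\tau^k\tilde\Psi$, regularized through $\varphi_\delta(s)=\sqrt{\delta^2+s^2}$ as in \eqref{varphideltaomega2n} so that the case $1<p<2$ is legitimate. The convection term drops out after integrating by parts in $\tau$ (periodicity and $\partial_\tau\mathcal U=0$), the diffusion term has the good sign, and one is left with $\tfrac{d}{dt}\|\partial_\tau^k\tilde\Psi\|_{L^p}\le\|\partial_\tau^k\tilde G\|_{L^p}\le\kappa_T\epsilon^{1/(2p)}$; integrating from $t=0$ gives $\|\partial_\tau^k\tilde\Psi\|_{L^\infty(0,T;L^p)}\le\kappa_T\epsilon^{1/(2p)}$. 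The plain $L^2$ energy estimate ($p=2$) additionally gives $\epsilon\|\nabla\partial_\tau^k\tilde\Psi\|_{L^2(0,T;L^2)}^2\le\kappa_T\epsilon^{1/2}$, i.e.\ $\|\nabla\partial_\tau^k\tilde\Psi\|_{L^2(0,T;L^2)}\le\kappa_T\epsilon^{-1/4}$. Adding back $\|\partial_\tau^k\Phi\|_{L^\infty(0,T;L^p)}\le\kappa_T\epsilon^{1/(2p)}$ and $\|\nabla\partial_\tau^k\Phi\|_{L^2(0,T;L^2)}\le\kappa_T\epsilon^{-1/4}$ from \cref{L:L2} (the $t^{-1/4}$ prefactor in the $\eta$-derivative of $\Phi$ being square integrable on $(0,T)$) yields \eqref{e:Lp_heat_like_sol}$_1$.

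\textbf{Normal-derivative estimates.} Here the Lighthill device of the main text is needed, since $\partial_\eta\Psi$ has no directly prescribed boundary value. Evaluating the $\tilde\Psi$-equation on $\eta=0$, where $\tilde\Psi\equiv0$ and $\mathcal U=0$, produces the Neumann-type condition $\partial_\eta(\partial_\eta\tilde\Psi)|_{\eta=0}=-\epsilon^{-1}\,\tilde G|_{\eta=0}$, with $\|\tilde G|_{\eta=0}\|_{L^2_\tau}\le\kappa_T$ by \eqref{e:heat_like_data}. Differentiating the $\tilde\Psi$-equation in $\eta$ gives an equation for $\partial_\tau^k\partial_\eta\tilde\Psi$ of the same drift–diffusion form, with the extra lower-order term $\partial_\eta\mathcal U\,\partial_\tau\tilde\Psi$, which by \eqref{e:heat_like_data} and the previous step is bounded in $L^2$ by $\kappa_T(1+t^{-1/2})\epsilon^{-1/4}$. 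An $L^2$ energy estimate — handling the boundary term generated by the Neumann condition with the trace theorem and the one-dimensional Agmon inequality exactly as in \eqref{e:conv_w_2_PPL-2}, absorbing $\tfrac{\epsilon}{2}\|\nabla\partial_\tau^k\partial_\eta\tilde\Psi\|_{L^2}^2$ into the left side — followed by Gr\"onwall and time integration, gives $\|\partial_\tau^k\partial_\eta\tilde\Psi\|_{L^\infty(0,T;L^2)}+\epsilon^{1/2}\|\nabla\partial_\tau^k\partial_\eta\tilde\Psi\|_{L^2(0,T;L^2)}\le\kappa_T\epsilon^{-1/4}$, and combining with the $\Phi$-bounds of \cref{L:L2} gives \eqref{e:Lp_heat_like_sol}$_2$. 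For the critical $L^1$ bound \eqref{e:Lp_heat_like_sol}$_3$ I would repeat the computation testing against $\varphi_\delta'(\partial_\tau^k\partial_\eta\tilde\Psi)$, as in \cref{vortestimates}: the diffusion term has the favourable sign, the boundary contribution is $\le\|\tilde G|_{\eta=0}\|_{L^1_\tau}\le\kappa_T$ since $|\varphi_\delta'|\le1$, the lower-order coupling is controlled by its already-established $L^1$-in-space, $L^1$-in-time bound, and $\delta\to0$ followed by integration in $t$ gives $\|\partial_\tau^k\partial_\eta\tilde\Psi\|_{L^\infty(0,T;L^1)}\le\kappa_T$; the explicit part is bounded by $\|\partial_\eta\Phi(t)\|_{L^1_\eta}\le\kappa_T$, which is precisely \cref{L:L2} with $m=1$, $p=1$.

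\textbf{Main obstacle.} The genuinely delicate part is the normal-derivative estimate. First, the Lighthill/Neumann condition feeds the trace $\tilde G|_{\eta=0}$ — hence a flux scaling like $\epsilon^{-1}$ — into the energy identity, so its control hinges on the trace/Agmon interpolation used in \eqref{e:conv_w_2_PPL-2}. Second, the lower-order coupling $\partial_\eta\mathcal U\,\partial_\tau\tilde\Psi$ forces the estimates to be taken in a strict order: all tangential ($\tau$-) bounds first, then the $\eta$-derivatives. Third, because the data is ill prepared, the initial layer makes $\partial_\eta\Phi$ (and hence $\partial_\eta\Psi$) blow up like $t^{-1/4}$ in $L^2_\eta$ as $t\to0^+$; the honest bounds therefore carry an integrable-in-time singular prefactor, harmless for every downstream Gr\"onwall argument but that must be tracked. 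Justifying the $L^p$ ($1<p<2$) and $L^1$ energy identities by the $\varphi_\delta$-regularization, as in \cref{vortestimates}, is routine but should be done carefully.
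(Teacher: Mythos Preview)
Your proposal is correct and follows essentially the same route as the paper: lift by the explicit heat solution $\Phi$, obtain a homogeneous-Dirichlet drift--diffusion problem for $\tilde\Psi=\Psi-\Phi$, run an $L^p$ energy estimate for the tangential bounds, derive the Neumann condition for $\partial_\eta\tilde\Psi$ by restricting the equation to $\eta=0$ (the Lighthill device), handle the resulting boundary flux via the trace/interpolation inequality, and obtain the $L^1$ bound by testing against $\varphi_\delta'(\partial_\eta\tilde\Psi)$ with $\varphi_\delta(s)=\sqrt{\delta^2+s^2}$. The only cosmetic difference is that the paper treats the $L^p$ case ($1<p<2$) by multiplying by $\tilde\Psi^{p-1}$ for $p$ a fraction with even numerator and then invoking continuity of the $L^p$ norm in $p$, whereas you regularize via $\varphi_\delta$; both are standard and interchangeable.
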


\begin{proof}
We denote:
\begin{equation}\label{e:difference_heat_like}
\widetilde{\Psi}
	:= \Psi - \Phi,
\end{equation}
and observe that $\widetilde{\Psi}$ satisfies the following inital-boundary
value problem:
\begin{equation}\label{e:difference_heat_like_eq}
\left\{
        \begin{array}{rl}
                \spacer
                        \dfrac{\pa \widetilde{\Psi}}{\pa t}
                        - \ep \dfrac{\pa^2 \widetilde{\Psi}}{\pa \tau^2}
                        - \ep \dfrac{\pa^2 \widetilde{\Psi}}{\pa \eta^2}
                        + \mathcal{U}(\eta) \dfrac{\pa \widetilde{\Psi}}{\pa \tau}
                        =
                        G
                        +
                        \ep \dfrac{\pa^2 \Phi}{\pa \tau^2}
                        -
                        \mathcal{U}(\eta) \dfrac{\pa \Phi}{\pa \tau},
                        &
                        0 < \tau < L_{\tau}, \,
                        \eta, \,t > 0,\\
                \spacer
			\widetilde{\Psi} \text{ is periodic} & \hspace{-2mm} \text{ in } \tau \text{ with period } L_{\tau},\\
                \spacer
                        \widetilde{\Psi} = 0 ,
                        & \eta = 0,\\
                \spacer
                        \widetilde{\Psi} \rightarrow 0,
                        & \text{as } \eta \rightarrow \infty,\\
                        \widetilde{\Psi} = 0,
                        & t= 0.
        \end{array}
\right.
\end{equation}

To prove (\ref{e:Lp_heat_like_sol})$_1$,
we multiply (\ref{e:difference_heat_like_eq})$_1$ by $\wi{\Psi}^{p-1}$
where $p>1$ is a simple fraction $q/r$ with an even integer $q$.
Then, by integrating over $(0, L_{\tau}) \times (0, \infty)$, we find that
\begin{equation}\label{e:energy_difference_1}
\begin{array}{l}
	\spacer\displaystyle
	\dfrac{1}{p} \dfrac{d}{dt} \| \wi{\Psi} \|_{L^p((0, L_{\tau})\times(0, \infty))}^p
	+
	\ep (p-1)
	\int_{0}^{\infty} \int_{0}^{L_{\tau}}
		|\nabla \wi{\Psi}|^2 \wi{\Psi}^{p-2}
	\, d\tau d\eta \\
		\qquad \qquad \qquad
		\spacer \displaystyle
			=
				\int_{0}^{\infty} \int_{0}^{L_{\tau}}
					\bigg(
					G
					+
					\ep \dfrac{\pa^2 \Phi}{\pa \tau^2}
                        			-
                        			\mathcal{U}(\eta) \dfrac{\pa \Phi}{\pa \tau}
					\bigg)
					\wi{\Psi}^{p-1}
				\, d\tau d\eta\\
		\qquad \qquad \qquad
		\spacer \displaystyle
			\leq
				\bigg\{
				\int_{0}^{\infty} \int_{0}^{L_{\tau}}
					\bigg(
					G
					+
					\ep \dfrac{\pa^2 \Phi}{\pa \tau^2}
                        			-
                        			\mathcal{U}(\eta) \dfrac{\pa \Phi}{\pa \tau}
					\bigg)^{p}
				\, d\tau d\eta
				\bigg\}^{\frac{1}{p}}
				\bigg\{
				\int_{0}^{\infty} \int_{0}^{L_{\tau}}
					\wi{\Psi}^p
				\, d\tau d\eta
				\bigg\}^{\frac{p-1}{p}}\\
		\qquad \qquad \qquad
		\displaystyle
			\leq
				\kap \bigg\|
						G
						+
						\ep \dfrac{\pa^2 \Phi}{\pa \tau^2}
                        				-
                        				\mathcal{U}(\eta) \dfrac{\pa \Phi}{\pa \tau}
					\bigg\|_{L^p((0, L_{\tau})\times(0, \infty))}^p
				+
				\kap	\|
						\wi{\Psi}
					\|_{L^p((0, L_{\tau})\times(0, \infty))}^p.

\end{array}
\end{equation}
The hypotheses on the data (\ref{e:heat_like_data}) and
the bounds (\ref{e:heat_tan_Lp}) on $\widetilde{\Psi}$ then give:
\begin{equation}\label{e:energy_difference_2}
	\dfrac{1}{p} \dfrac{d}{dt} \| \wi{\Psi} \|_{L^p((0, L_{\tau})\times(0, \infty))}^p
	+
	\ep (p-1)
	\|
		\wi{\Psi}^{\frac{p-2}{2}} \,
		\nabla \wi{\Psi}
	\|_{L^2((0, L_{\tau})\times(0, \infty))}^2
			\leq
				\kap \ep^{\frac{1}{2}}
				+
				\kap	\|
						\wi{\Psi}
					\|_{L^p((0, L_{\tau})\times(0, \infty))}^p,
\end{equation}
which implies that
\begin{equation}\label{e:energy_difference_3}
	\left\{
	\begin{array}{l}
		\spacer
		\| \Psi \|_{L^{\infty}(0, T; L^p((0, L_{\tau})\times(0, \infty)))}
			\leq
				\kap_T \ep^{\frac{1}{2p}}, \\
		\| \nabla \Psi \|_{L^{2}(0, T; L^2((0, L_{\tau})\times(0, \infty)))}
			\leq
				\kap_T \ep^{- \frac{1}{4}}.
	\end{array}
	\right.
\end{equation}
given the continuity of the $L^p$ norm in $p$,
the estimate (\ref{e:energy_difference_3})$_1$ is valid for any $1 < p \leq \infty$.

Next, since  any tangential derivative of  satisfies an
equation similar to (\ref{e:difference_heat_like_eq})),
one can verify (\ref{e:Lp_heat_like_sol})$_1$ for  $k > 0$ in an analogous
manner.

To show (\ref{e:Lp_heat_like_sol})$_2$, using the regularity on the data and
$\Phi$, we restrict the equation for $\widetilde{\Psi}$ to $\eta = 0$ and find
that:
\begin{equation}\label{e:BC_Neumann_difference}
	- \ep \dfrac{\pa^2 \wi{\Psi}}{\pa \eta^2}
		=
			G
			- \ep \dfrac{\pa^2 g}{\pa \tau^2},
	\quad
	\text{at }
	\eta=0,
\end{equation}
using also that that $\Phi|_{\eta = 0} = g$.

Hence, after differentiating (\ref{e:difference_heat_like_eq}) in $\eta$, we
obtain an equation for $\pa \wi{\Psi}/ \pa \eta$ supplemented by a Neumann
boundary condition:
\begin{equation}\label{e:difference_heat_like_eq_H1}
\left\{
        \begin{array}{l}
                \spacer
                        \dfrac{\pa }{\pa t} \bigg(  \dfrac{\pa \widetilde{\Psi}}{\pa \eta} \bigg)
                        - \ep \dfrac{\pa^2 }{\pa \tau^2} \bigg(  \dfrac{\pa \widetilde{\Psi}}{\pa \eta} \bigg)
                        - \ep \dfrac{\pa^2 }{\pa \eta^2} \bigg(  \dfrac{\pa \widetilde{\Psi}}{\pa \eta} \bigg)
                        + \mathcal{U} \dfrac{\pa }{\pa \tau} \bigg(  \dfrac{\pa \widetilde{\Psi}}{\pa \eta} \bigg)\\
                        \qquad \qquad
                        =
                        -
                        \dfrac{\pa \mathcal{U}}{\pa \eta} \, \widetilde{\Psi}
                        +
                        \dfrac{\pa}{\pa \eta}
                        \bigg(
                        G
                        +
                        \ep \dfrac{\pa^2 \Phi}{\pa \tau^2}
                        -
                        \mathcal{U} \dfrac{\pa \Phi}{\pa \tau}
                        \bigg),
                        \quad
                        0 < \tau < L_{\tau}, \,
                        \eta, \,t > 0,\\
                \spacer
			\dfrac{\pa \widetilde{\Psi}}{\pa \eta} \text{ is periodic  in } \tau \text{ with period } L_{\tau},\\
                \spacer
                        \dfrac{\pa^2 \widetilde{\Psi}}{\pa \eta^2}
                        =
                        - \ep^{-1} G
			+
			\dfrac{\pa^2 g}{\pa \tau^2} ,
                        \quad \eta = 0,\\
                \spacer
                        \dfrac{\pa \widetilde{\Psi}}{\pa \eta} \rightarrow 0,
                        \quad \text{as } \eta \rightarrow \infty,\\
                        \dfrac{\pa \widetilde{\Psi}}{\pa \eta} = 0,
                        \quad t= 0.
        \end{array}
\right.
\end{equation}
We next multiply (\ref{e:difference_heat_like_eq_H1})$_1$ by $\pa \wi{\Psi}/ \pa
\eta$ and
integrate over $(0, L_{\tau}) \times (0, \infty)$:
\begin{equation}\label{e:energy_difference_H1_1}
\begin{array}{l}
	\spacer \displaystyle
	\dfrac{1}{2} \dfrac{d}{dt}
	\bigg\| \dfrac{\pa \widetilde{\Psi}}{\pa \eta} \bigg\|_{L^p((0, L_{\tau})\times(0, \infty))}^2
	+
	\ep
	\bigg\| \nabla \dfrac{\pa \widetilde{\Psi}}{\pa \eta} \bigg\|_{L^p((0,
L_{\tau})\times(0, \infty))}^2
			=
				\ep \int_{0}^{L_{\tau}}
					\bigg[
					\dfrac{\pa^2 \widetilde{\Psi}}{\pa \eta^2}
					\dfrac{\pa \widetilde{\Psi}}{\pa \eta}
					\bigg]_{\eta = 0}
				d\tau\\
		\qquad \qquad \qquad \quad
		\displaystyle
				+
				\int_{0}^{\infty} \int_{0}^{L_{\tau}}
					\bigg(
					-
                        			\dfrac{\pa \mathcal{U}}{\pa \eta} \, \widetilde{\Psi}
                        			+
                        			\dfrac{\pa}{\pa \eta}
                        			\bigg(
                        			G
                        			+
                        			\ep \dfrac{\pa^2 \Phi}{\pa \tau^2}
                        			-
                        			\mathcal{U} \dfrac{\pa \Phi}{\pa \tau}
                        			\bigg)
					\bigg)
					\dfrac{\pa \widetilde{\Psi}}{\pa \eta}
				\, d\tau d\eta.
\end{array}
\end{equation}

Using  Lemma \ref{L:Trace}, we estimate the first
term on the right-hand side of (\ref{e:energy_difference_H1_1}) as follows:
\begin{equation}\label{e:energy_difference_H1_2}
	\begin{array}{l}
		\displaystyle \spacer
		\ep
		\bigg|
		\int_{0}^{L_{\tau}}
					\bigg[
					\dfrac{\pa^2 \widetilde{\Psi}}{\pa \eta^2}
					\dfrac{\pa \widetilde{\Psi}}{\pa \eta}
					\bigg]_{\eta = 0}
					d\tau \bigg|
					\leq
					\bigg|
					\int_{0}^{L_{\tau}}
					\bigg[
					\Big(
						-G + \ep \dfrac{\pa^2 g}{\pa \tau^2}
					\Big)
					\dfrac{\pa \widetilde{\Psi}}{\pa \eta}
					\bigg]_{\eta = 0}
					d\tau \bigg|\\
				\quad
					\spacer \displaystyle
					\leq
					\kap \Big\|
						\dfrac{\pa \widetilde{\Psi}}{\pa
\eta}\lfloor_{\eta=0}
						\Big\|_{L^2(0, L_{\tau})} \leq
					\kap \Big\|
						\dfrac{\pa \widetilde{\Psi}}{\pa \eta}
						\Big\|_{L^2((0, L_{\tau}) \times (0, \infty))}^{\frac{1}{2}}
						\Big\|
						\dfrac{\pa \widetilde{\Psi}}{\pa \eta}
						\Big\|_{H^1((0, L_{\tau}) \times (0, \infty))}^{\frac{1}{2}}\\
				\quad
					\spacer \displaystyle
					\leq
					\kap \Big\|
						\dfrac{\pa \widetilde{\Psi}}{\pa \eta}
						\Big\|_{L^2((0, L_{\tau}) \times (0, \infty))}
						+
					\kap
						\Big\|
						\dfrac{\pa \widetilde{\Psi}}{\pa \eta}
						\Big\|_{L^2((0, L_{\tau}) \times (0, \infty))}^{\frac{1}{2}}
						\Big\|
						\nabla
						\dfrac{\pa \widetilde{\Psi}}{\pa \eta}
						\Big\|_{L^2((0, L_{\tau}) \times (0, \infty))}^{\frac{1}{2}}\\
				\quad
					\spacer \displaystyle
					\leq
					\kap
					+
					\Big\|
						\dfrac{\pa \widetilde{\Psi}}{\pa \eta}
					\Big\|_{L^2((0, L_{\tau}) \times (0, \infty))}^2
					+
					\kap \ep^{- \frac{1}{2}}
					+
					\kap \ep^{\frac{1}{2}}
					\Big\|
						\dfrac{\pa \widetilde{\Psi}}{\pa \eta}
					\Big\|_{L^2((0, L_{\tau}) \times (0, \infty))}
					\Big\|
						\nabla
						\dfrac{\pa \widetilde{\Psi}}{\pa \eta}
					\Big\|_{L^2((0, L_{\tau}) \times (0, \infty))}\\
				\quad
					\displaystyle
					\leq
					\kap \ep^{- \frac{1}{2}}
					+
					\Big\|
						\dfrac{\pa \widetilde{\Psi}}{\pa \eta}
					\Big\|_{L^2((0, L_{\tau}) \times (0, \infty))}^2
					+
					\dfrac{1}{2}
					\ep
					\Big\|
						\nabla
						\dfrac{\pa \widetilde{\Psi}}{\pa \eta}
					\Big\|_{L^2((0, L_{\tau}) \times (0, \infty))}^2.
	\end{array}
\end{equation}

We can then estimate the second term on the right-hand side of
(\ref{e:energy_difference_H1_1}) as follows:
\begin{equation}\label{e:energy_difference_H1_3}
\begin{array}{l}
	\spacer \displaystyle
		\bigg|
		\int_{0}^{\infty} \int_{0}^{L_{\tau}}
					\bigg(
					-
                        			\dfrac{\pa \mathcal{U}}{\pa \eta} \, \widetilde{\Psi}
                        			+
                        			\dfrac{\pa}{\pa \eta}
                        			\bigg(
                        			G
                        			+
                        			\ep \dfrac{\pa^2 \Phi}{\pa \tau^2}
                        			-
                        			\mathcal{U} \dfrac{\pa \Phi}{\pa \tau}
                        			\bigg)
					\bigg)
					\dfrac{\pa \widetilde{\Psi}}{\pa \eta}
				\, d\tau d\eta \bigg|\\
	\quad
	\leq
		\begin{array}{rl}
			\left\{
				\begin{array}{l}
					\spacer
					\Big\| \dfrac{\pa \mathcal{U}}{\pa \eta} \Big\|_{L^{\infty}((0, L_{\tau}) \times (0, \infty))}
					\| \widetilde{\Psi} \|_{L^2((0, L_{\tau}) \times (0, \infty))}\\
					\spacer
					+
					\Big\| \dfrac{\pa G}{\pa \eta} \Big\|_{L^2((0, L_{\tau}) \times (0, \infty))}
					+
					\ep
					\Big\| \dfrac{\pa^3 \Phi}{\pa \eta \pa \tau^2} \Big\|_{L^2((0, L_{\tau}) \times (0, \infty))}\\
					\spacer
					+
					\Big\| \dfrac{\pa \mathcal{U}}{\pa \eta} \Big\|_{L^{\infty}((0, L_{\tau}) \times (0, \infty))}
					\Big\| \dfrac{\pa \Phi}{\pa \tau} \Big\|_{L^2((0, L_{\tau}) \times (0, \infty))}\\
					+
					\| \mathcal{U} \|_{L^{\infty}((0, L_{\tau}) \times (0, \infty))}
					\Big\| \dfrac{\pa^2 \Phi}{\pa \eta \pa \tau} \Big\|_{L^2((0, L_{\tau}) \times (0, \infty))}
				\end{array}
			\right\}
			&
			\hspace{-2mm}
			\Big\|
			\dfrac{\pa \widetilde{\Psi}}{\pa \eta}
			\Big\|_{L^2((0, L_{\tau}) \times (0, \infty))} \spacer
		\end{array}\\
	\quad
	\spacer
	\leq
		\kap_T(1 + t^{-\frac{1}{2}}) \, \ep^{-\frac{1}{4}}
		\Big\|
			\dfrac{\pa \widetilde{\Psi}}{\pa \eta}
		\Big\|_{L^2((0, L_{\tau}) \times (0, \infty))}\\
	\quad
	\leq
		\kap_T(1 + t^{-\frac{1}{2}}) \, \ep^{-\frac{1}{2}}
		+
		\kap_T(1 + t^{-\frac{1}{2}})
		\Big\|
			\dfrac{\pa \widetilde{\Psi}}{\pa \eta}
		\Big\|_{L^2((0, L_{\tau}) \times (0, \infty))}^2.
\end{array}
\end{equation}

Combining (\ref{e:energy_difference_H1_1})-(\ref{e:energy_difference_H1_3}), we find that
\begin{equation}\label{e:energy_difference_H1_4}
\begin{array}{l}
	\spacer \displaystyle
	\dfrac{d}{dt}
	\bigg\| \dfrac{\pa \widetilde{\Psi}}{\pa \eta} \bigg\|_{L^p((0, L_{\tau})\times(0, \infty))}^2
	+
	\ep
	\bigg\| \nabla \dfrac{\pa \widetilde{\Psi}}{\pa \eta} \bigg\|_{L^p((0, L_{\tau})\times(0, \infty))}^2\\		\qquad \qquad \qquad
		\spacer \displaystyle
			\leq
		\kap_T(1 + t^{-\frac{1}{2}}) \, \ep^{-\frac{1}{2}}
		+
		\kap_T(1 + t^{-\frac{1}{2}})
		\Big\|
			\dfrac{\pa \widetilde{\Psi}}{\pa \eta}
		\Big\|_{L^2((0, L_{\tau}) \times (0, \infty))}^2.
\end{array}
\end{equation}
Using the Gronwall's Lemma with the integrating factor $exp(-\kap_T t - 2 \kap_T
t^{1/2})$,
we deduce that
$$
\Big\| \dfrac{\pa  \wi{\Psi}}{  \pa \eta} \Big\|_{L^{\infty}(0, T; L^2((0, L_{\tau}) \times (0, \infty)))}
	+
	\ep^{\frac{1}{2}}
	\Big\|
		\nabla \dfrac{\pa  \wi{\Psi}}{ \pa \eta}
	\Big\|_{L^{2}(0, T; L^2((0, L_{\tau}) \times (0, \infty)))}
                                                                \leq
                                                                        \kap_T
\ep^{-\frac{1}{4}},
$$
and
(\ref{e:Lp_heat_like_sol})$_2$ with $k=0$ follows from the uniform bounds
(\ref{e:heat_tan_Lp}) on $\Phi$.
(\ref{e:Lp_heat_like_sol})$_2$ for  $k > 0$ can be verified in an analogous
manner.

To verify (\ref{e:Lp_heat_like_sol})$_3$,  we introduce a standard convex
regularization of the absolute
value, $\mathcal{F}_{\lambda}$, $\lambda>0$,  defined as
\begin{equation}\label{e:F_delta}
        \mathcal{F}_{\lambda} (x)
                = \sqrt{\lambda^2 + x^2}.
\end{equation}
and obtain:
\begin{equation}\label{e:L^1_PPF_TEMP1}
\begin{array}{l}
\displaystyle \spacer
        \dfrac{d}{dt} \int_{0}^{L_{\tau}} \int_0^{\infty}
        \mathcal{F}_{\lambda}\Big( \dfrac{\pa \wi{\Psi}}{\pa \eta} \Big)
        \, d\eta d\tau\\
\displaystyle \spacer \quad
                =
                    \int_{0}^{L_{\tau}} \int_0^{\infty}
                    \mathcal{F}^{\pri}_{\lambda}\Big( \dfrac{\pa \wi{\Psi}}{\pa \eta} \Big)
                    \dfrac{\pa}{\pa t} \Big( \dfrac{\pa \wi{\Psi}}{\pa \eta} \Big)
                    \, d\eta d\tau\\
\displaystyle \spacer \quad
        =
            \ep
            \int_{0}^{L_{\tau}} \int_0^{\infty}
                    \mathcal{F}^{\pri}_{\lambda} \Big( \dfrac{\pa \wi{\Psi}}{\pa \eta} \Big)
                    \Delta \dfrac{\pa \wi{\Psi}}{\pa \eta}
                    \, d\eta d\tau
            -
            \int_{0}^{L_{\tau}} \int_0^{\infty}
                    \mathcal{F}^{\pri}_{\lambda} \Big( \dfrac{\pa \wi{\Psi}}{\pa \eta} \Big)
                    \mathcal{U} \dfrac{\pa }{\pa \tau}\Big( \dfrac{\pa \wi{\Psi}}{\pa \eta} \Big)
                    \, d\eta d\tau\\
\displaystyle
\quad \quad
            +
            \int_{0}^{L_{\tau}} \int_0^{\infty}
                    \mathcal{F}^{\pri}_{\lambda} \Big( \dfrac{\pa \wi{\Psi}}{\pa \eta} \Big)
                    \big(  \text{right-hand side of (\ref{e:difference_heat_like_eq_H1})} \big)_1
                    \, d\eta d\tau.
\end{array}
\end{equation}

Now convexity of $\mathcal{F}_\lambda$ inplies that
\begin{equation*}\label{e:L^1_PPF_TEMP2}
\begin{array}{rl}
    \spacer
        \Delta \Big( \mathcal{F}_{\lambda} \Big( \dfrac{\pa \wi{\Psi}}{\pa \eta} \Big) \Big)
                &
                =
                    \mathcal{F}^{\pri}_{\lambda} \Big( \dfrac{\pa \wi{\Psi}}{\pa \eta} \Big)
                    \Delta \dfrac{\pa \wi{\Psi}}{\pa \eta}
                    +
                    \mathcal{F}^{\pri \pri}_{\lambda} \Big( \dfrac{\pa \wi{\Psi}}{\pa \eta} \Big)
                    \Big\{
                    \Big( \dfrac{\pa^2 \wi{\Psi}}{\pa \tau \pa \eta} \Big)^2
                    +
                    \Big( \dfrac{\pa^2 \wi{\Psi}}{\pa \eta^2} \Big)^2
                    \Big\}\\
                &
                \geq
                    \mathcal{F}^{\pri}_{\lambda} \Big( \dfrac{\pa \wi{\Psi}}{\pa \eta} \Big)
                    \Delta \dfrac{\pa \wi{\Psi}}{\pa \eta}.
\end{array}
\end{equation*}
Then, integrating by parts, the first term on the right-hand side of (\ref{e:L^1_PPF_TEMP1}) can be estimated as
\begin{equation}\label{e:L^1_PPF_TEMP3}
\begin{array}{l}
\displaystyle \spacer
    \ep
            \int_{0}^{L_{\tau}} \int_0^{\infty}
                    \mathcal{F}^{\pri}_{\lambda} \Big( \dfrac{\pa \wi{\Psi}}{\pa \eta} \Big)
                    \Delta \dfrac{\pa \wi{\Psi}}{\pa \eta}
                    \, d\eta d\tau \\
    \displaystyle \spacer \qquad \qquad
    \leq
            \ep
            \int_{0}^{L_{\tau}} \int_0^{\infty}
                    \Delta \Big( \mathcal{F}_{\lambda} \Big( \dfrac{\pa \wi{\Psi}}{\pa \eta} \Big) \Big)
                    \, d\eta d\tau
    =
            \ep
            \int_{0}^{L_{\tau}}
            	\bigg[
                    \mathcal{F}^{\pri}_{\lambda} \Big( \dfrac{\pa \wi{\Psi}}{\pa \eta} \Big)
                    \dfrac{\pa^2 \wi{\Psi}}{\pa \eta^2}
                 \bigg]_{\eta = 0} \, d\tau.
\end{array}
\end{equation}

Using periodicity in the $\tau$-direction and
the fact that $\mathcal{U}$ is a function in $\eta$ only,
we observe that
the second term on the right-hand side of (\ref{e:L^1_PPF_TEMP1}) is identically zero:
\begin{equation}\label{e:L^1_PPF_TEMP4}
\begin{array}{l}
\displaystyle \spacer
        \int_{0}^{L_{\tau}} \int_0^{\infty}
                    \mathcal{F}^{\pri}_{\lambda} \Big( \dfrac{\pa \wi{\Psi}}{\pa \eta} \Big)
                    \mathcal{U}
                    \dfrac{\pa }{\pa \tau}\Big( \dfrac{\pa \wi{\Psi}}{\pa \eta} \Big)
                    \, d\eta d\tau\\
        \qquad \displaystyle \spacer
        =
            \int_0^{\infty} \int_{0}^{L_{\tau}}
                        \dfrac{\pa}{\pa \tau} \Big( \mathcal{F}_{\lambda} \Big( \dfrac{\pa \wi{\Psi}}{\pa \eta} \Big) \Big)
                        \, \mathcal{U}
                        \, d\tau d\eta
        =
            \int_0^{\infty}
                    \Big[
                        \mathcal{F}_{\lambda} \Big( \dfrac{\pa \wi{\Psi}}{\pa \eta} \Big) \, \mathcal{U}
                    \Big]_{\tau = 0}^{\tau = L_{\tau}}
                    \, d\eta
        = 0.
\end{array}
\end{equation}

We can finally conclude that:
\begin{equation}\label{e:L^1_PPF_TEMP5}
\begin{array}{l}
\displaystyle \spacer
        \dfrac{d}{dt} \int_{0}^{L_{\tau}} \int_0^{\infty}
        \mathcal{F}_{\lambda}\Big( \dfrac{\pa \wi{\Psi}}{\pa \eta} \Big)
        \, d\eta d\tau\\
\displaystyle \spacer
        \leq
            \ep
            \int_{0}^{L_{\tau}}
            	\bigg[
                    \mathcal{F}^{\pri}_{\lambda} \Big( \dfrac{\pa \wi{\Psi}}{\pa \eta} \Big)
                    \dfrac{\pa^2 \wi{\Psi}}{\pa \eta^2}
                 \bigg]_{\eta = 0} \, d\tau
            +
            \int_{0}^{L_{\tau}} \int_0^{\infty}
                    \mathcal{F}^{\pri}_{\lambda} \Big( \dfrac{\pa \wi{\Psi}}{\pa \eta} \Big)
                    \big(  \text{right-hand side of (\ref{e:difference_heat_like_eq_H1})} \big)_1
                    \, d\eta d\tau.
\end{array}
\end{equation}

The equation for $\wi{\Psi}$ at the boundary $\eta=0$ yields  a uniform bound in
$\eps$ on
$\ep \pa^2 \wi{\Psi}_2 / \pa \eta^2\lfloor_{\eta=0}$  in $L^{\infty}(0, T;
L^1(0, L_{\tau}))$.
It follows that the right-hand side of (\ref{e:difference_heat_like_eq_H1}) is
in $L^{\infty}(0, T; L^1((0, L_{\tau})\times(0, \infty)))$ uniformuly in $\ep$.
Then, since $|\mathcal{F}^{\pri}_{\lambda} (\cdot)| \leq 1$  in $\mathbb{R}$ as well,
we conclude that
\begin{equation*}\label{e:L^1_PPF_TEMP5-1}
    \dfrac{d}{dt} \int_{0}^{L_{\tau}} \int_0^{\infty}
        \mathcal{F}_{\lambda}\Big( \dfrac{\pa \wi{\Psi}}{\pa \eta} \Big)
        \, d\eta d\tau
    \leq
            \kap_T,
    \quad
    \text{independent of } \lambda.
\end{equation*}
Since the integral of $\mathcal{F}_{\lambda}$ over $(0, L_{\tau})\times(0, \infty)$ is positive at each time and $\pa \wi{\Psi} / \pa \eta = 0$ at $t = 0$, we see that
\begin{equation}\label{e:L^1_PPF_TEMP5-1-1}
    \lim_{\lambda \rightarrow 0}
    \int_{0}^{L_{\tau}} \int_0^{\infty}
        \mathcal{F}_{\lambda}\Big( \dfrac{\pa \wi{\Psi}}{\pa \eta} \Big)
        \, d\eta d\tau
    \leq
            \kap_T,
    \quad
    \text{uniformly in } 0 < t < T.
\end{equation}

Using (\ref{e:heat_tan_Lp}) and (\ref{e:Lp_heat_like_sol})$_2$, thanks to the Lebesgue dominated convergence theorem,
we deduce from (\ref{e:L^1_PPF_TEMP5-1-1}) that
\begin{equation*}\label{e:L^1_PPF_TEMP6}
        \Big\| \dfrac{\pa \wi{\Psi}}{\pa \eta} \Big\|_{L^1((0, L_{\tau})\times(0, \infty))}
                =
                    \int_{0}^{L_{\tau}} \int_0^{\infty}
                                \lim_{\lambda \rightarrow 0} \,
                                \mathcal{F}_{\lambda}\Big( \dfrac{\pa \wi{\Psi}}{\pa \eta} \Big)
                =
                    \lim_{\lambda \rightarrow 0}
                    \int_{0}^{L_{\tau}} \int_0^{\infty}
                                \mathcal{F}_{\lambda}\Big( \dfrac{\pa \wi{\Psi}}{\pa \eta} \Big)
                \leq
                    \kap_T,
\end{equation*}
independent of $\ep$, $t$, and $\lambda$, uniformly in time $0 < t < T$.
Hence (\ref{e:Lp_heat_like_sol})$_3$ {with $k=0$}
follows from (\ref{e:heat_tan_Lp}) and the inequality above.
Equation (\ref{e:Lp_heat_like_sol})$_3$ for $k>0$ can be proved similarly as
well.
\end{proof}

%
%
\section{A few auxiliary results}\label{A:Lemmas}

In this Appendix, we collect a few auxiliary results, which are needed for the
analysis of previous sections.

\cref{L:Trace} below contains a well-known trace inequality, mostly
used in the special case where $p = q = q' = 2$. A complete proof of this fact
and \cref{C:TraceCor} can be found in
\cite{Kel14Observations}.

\begin{lemma}[Trace lemma]\label{L:Trace}
	Let $p \in (1, \iny)$, $q \in [1, \iny]$, and let $q'$ be \Holder
conjugate to $q$.
Then, there exists a constant $C = C(\Omega)$
    such that for all $f \in W^{1, p}(\Omega)$,
    \begin{align*}
        \norm{f}_{L^p(\Gamma)}
            \le C \norm{f}_{L^{(p - 1) q}(\Omega)}
            		^{1 - \frac{1}{p}}
                \norm{f}_{W^{1, q'}(\Omega)}
                	^{\frac{1}{p}}.
    \end{align*}
    If, in addition, $f \in W^{1, p}(\Omega)$ has mean zero or $f \in W^{1,
p}_0(\Omega)$,
    \begin{align*}
        \norm{f}_{L^p(\Gamma)}
            \le C \norm{f}_{L^{(p - 1) q}(\Omega)}
            		^{1 - \frac{1}{p}}
                \norm{\grad f}_{L^{q'}(\Omega)}
                	^{\frac{1}{p}}.
    \end{align*}
\end{lemma}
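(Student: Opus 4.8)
The plan is to reduce the whole estimate to a single integration-by-parts identity against a field transversal to $\Gamma$, followed by two applications of \Holders inequality. Since $\Gamma$ is of class $C^\infty$, one first fixes a vector field $\blds{h} \in C^1(\overline{\Omega}; \R^d)$ with $\blds{h} \cdot \n \ge c_0 > 0$ on $\Gamma$ — for instance a smooth extension of the outer unit normal from a tubular neighborhood of $\Gamma$, multiplied by a cutoff supported near $\Gamma$. For $f \in C^1(\overline{\Omega})$ and $p \in (1, \iny)$ the function $|f|^p$ is $C^1$ with $\grad |f|^p = p|f|^{p-2} f \, \grad f$, so applying the divergence theorem to $|f|^p \blds{h}$ yields
\begin{equation*}
    c_0 \int_\Gamma |f|^p \, dS
        \le \int_\Gamma |f|^p \, \blds{h} \cdot \n \, dS
        = \int_\Omega \big( p |f|^{p-2} f \, \grad f \cdot \blds{h} + |f|^p \dv \blds{h} \big) \, dx
        \le C_1 \Big( \int_\Omega |f|^{p-1} |\grad f| \, dx + \int_\Omega |f|^p \, dx \Big),
\end{equation*}
with $C_1$ depending only on $p$ and $\blds{h}$ (hence only on $\Omega$). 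If one wishes to avoid the mild non-smoothness of $t \mapsto |t|^p$ at $t = 0$ when $1 < p < 2$, one replaces $|f|^p$ by $(\delta^2 + f^2)^{p/2}$ and lets $\delta \to 0^+$ using dominated convergence.

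Next I would bound each of the two volume integrals by \Holders inequality with the conjugate exponents $q$ and $q'$, splitting $|f|^{p-1} |\grad f| = |f|^{p-1} \cdot |\grad f|$ and $|f|^p = |f|^{p-1} \cdot |f|$; this produces $\norm{f}_{L^{(p-1)q}(\Omega)}^{p-1}\norm{\grad f}_{L^{q'}(\Omega)}$ and $\norm{f}_{L^{(p-1)q}(\Omega)}^{p-1}\norm{f}_{L^{q'}(\Omega)}$ respectively. Inserting these into the displayed inequality, extracting $p$-th roots, and using $\norm{\grad f}_{L^{q'}} + \norm{f}_{L^{q'}} \le 2\norm{f}_{W^{1,q'}}$ gives the first inequality for $f \in C^1(\overline{\Omega})$; the general case $f \in W^{1,p}(\Omega)$ then follows by density (when $q' < \iny$, $C^\infty(\overline{\Omega})$ is dense in $W^{1,p}\cap W^{1,q'}$; the endpoint $q = 1$, $q' = \iny$ is handled by approximating in $W^{1,r}$ for finite $r$ and passing to the limit, or simply by observing that only lower-order norms occur on the right). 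For the refined inequality one invokes the Poincar\'e inequality $\norm{f}_{L^{q'}(\Omega)} \le C \norm{\grad f}_{L^{q'}(\Omega)}$ on the bounded domain $\Omega$ — valid for mean-zero $f$ (Poincar\'e--Wirtinger) and for $f \in W^{1,q'}_0(\Omega)$ — to absorb $\norm{f}_{L^{q'}(\Omega)}$ into $\norm{\grad f}_{L^{q'}(\Omega)}$ before taking roots.

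The computation is routine and I do not expect a genuine obstacle. The only points demanding a little care are the construction and $C^1$-regularity of the transversal field $\blds{h}$, which is immediate from the smoothness of $\Gamma$, and the behavior of $|t|^p$ near $t = 0$ for $1 < p < 2$, dealt with by the $(\delta^2 + f^2)^{p/2}$ regularization noted above; the endpoint $q = 1$ in the density step is the only other detail to watch. A complete proof along exactly these lines is given in \cite{Kel14Observations}, to which we refer for the remaining verifications.
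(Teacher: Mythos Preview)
Your argument is correct and is precisely the standard divergence-theorem-plus-\Holder approach; the paper itself does not supply a proof but simply refers to \cite{Kel14Observations}, which you also cite. There is nothing to compare---your sketch is exactly what the paper defers to the reference for.
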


\begin{cor}\label{C:TraceCor}
    For any $\vv \in H$,
    \begin{align*}
        \norm{\vv}_{L^2(\Gamma)}
            \le C \norm{\vv}_{L^2(\Omega)}
            		^{\frac{1}{2}}
                \norm{\grad \vv}_{L^2(\Omega)}
                	^{\frac{1}{2}}
    \end{align*}
    and for any $\vv \in V \cap H^2(\Omega)$,
    \begin{align*}
        \norm{\curl \vv}_{L^2(\Gamma)}
            \le C \norm{\curl \vv}_{L^2(\Omega)}
            		^{\frac{1}{2}}
                \norm{\grad \curl \vv}_{L^2(\Omega)}
                	^{\frac{1}{2}}.
    \end{align*}
\end{cor}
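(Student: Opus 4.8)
The plan is to obtain both estimates directly from the Trace Lemma \cref{L:Trace} with the exponents $p = q = q' = 2$, applied to each scalar component of the relevant vector field, after verifying that those components have vanishing mean over $\Omega$. This mean-zero property is precisely what licenses the use of the sharper (second) form of \cref{L:Trace}, in which only the gradient appears on the right. Throughout we may assume the right-hand sides are finite, so that $\vv \in H \cap H^1(\Omega)$ in the first case and $\curl \vv \in H^1(\Omega)$ in the second.

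For the first inequality, the key point is that every $\vv \in H$ satisfies $\int_\Omega \vv \, d\blds{x} = 0$. Indeed, for an arbitrary constant vector $\blds{c}$ we write $\blds{c} = \grad(\blds{c}\cdot\blds{x})$ and integrate by parts:
\[
    \blds{c} \cdot \int_\Omega \vv \, d\blds{x}
        = \int_\Omega \grad(\blds{c}\cdot\blds{x}) \cdot \vv \, d\blds{x}
        = \int_\Gamma (\blds{c}\cdot\blds{x})\, (\vv\cdot\n) \, dS
            - \int_\Omega (\blds{c}\cdot\blds{x})\, \dv \vv \, d\blds{x}
        = 0,
\]
since $\dv \vv = 0$ in $\Omega$ and $\vv \cdot \n = 0$ on $\Gamma$ (the identity being valid in the sense of the normal trace even when $\vv$ is merely $L^2$ with $L^2$ divergence). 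Hence each component $v_i$ of $\vv$ has zero mean, and \cref{L:Trace} with $p = q = q' = 2$ gives $\norm{v_i}_{L^2(\Gamma)} \le C \norm{v_i}_{L^2(\Omega)}^{1/2} \norm{\grad v_i}_{L^2(\Omega)}^{1/2}$ for each $i$. Squaring, summing over $i$, and applying Cauchy--Schwarz in $i$ yields
\[
    \norm{\vv}_{L^2(\Gamma)}^2
        = \sum_i \norm{v_i}_{L^2(\Gamma)}^2
        \le C \sum_i \norm{v_i}_{L^2(\Omega)} \norm{\grad v_i}_{L^2(\Omega)}
        \le C \norm{\vv}_{L^2(\Omega)} \norm{\grad \vv}_{L^2(\Omega)},
\]
which is the first claim.

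For the second inequality, let $\blds{w} := \curl \vv$, which lies in $H^1(\Omega)$ since $\vv \in V \cap H^2(\Omega)$. The same scheme applies once we check $\int_\Omega \curl \vv \, d\blds{x} = 0$: this follows from the identity $\int_\Omega \curl \vv \, d\blds{x} = \int_\Gamma \n \times \vv \, dS$ (obtained by applying the divergence theorem to each component of $\curl\vv$) together with $\vv|_\Gamma = 0$, valid because $\vv \in V \subset H^1_0(\Omega)$; in two dimensions the same computation shows the scalar $\curl\vv$ has zero mean. Thus every component of $\curl\vv$ has vanishing mean, and applying \cref{L:Trace} componentwise to $\curl\vv$ and recombining via Cauchy--Schwarz exactly as above produces $\norm{\curl\vv}_{L^2(\Gamma)} \le C \norm{\curl\vv}_{L^2(\Omega)}^{1/2} \norm{\grad \curl \vv}_{L^2(\Omega)}^{1/2}$.

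There is no genuine obstacle here: the only mildly delicate point is recognizing that the structural hypotheses defining $H$ and $V$ force the relevant fields to have zero mean, so that the gradient-only form of \cref{L:Trace} is available. Alternatively one could use the first form of \cref{L:Trace} to reach $\norm{\vv}_{L^2(\Gamma)}^2 \le C\norm{\vv}_{L^2(\Omega)}\norm{\vv}_{H^1(\Omega)}$ and then absorb the full $H^1$-norm into $\norm{\grad\vv}_{L^2(\Omega)}$ by a Poincar\'e inequality on $H \cap H^1$ (which holds because the only constant vector field tangent to $\Gamma$ is zero) and similarly on $V$; but the mean-zero route is cleaner and self-contained.
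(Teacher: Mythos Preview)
The paper does not actually prove \cref{C:TraceCor}; it states only that ``a complete proof of this fact and \cref{C:TraceCor} can be found in \cite{Kel14Observations}.'' So there is no in-paper argument to compare against. Your proof is correct and is exactly the natural one suggested by the paper's own remark that \cref{L:Trace} ``is mostly used in the special case where $p=q=q'=2$'': the mean-zero observations (that $\vv\in H$ has $\int_\Omega \vv=0$ via the divergence theorem and the no-penetration condition, and that $\curl\vv$ has zero mean for $\vv\in V$ via $\int_\Omega\curl\vv=\int_\Gamma \n\times\vv$ with $\vv|_\Gamma=0$) are precisely what is needed to invoke the sharper form of \cref{L:Trace}, and the componentwise application followed by Cauchy--Schwarz is routine.

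One small caveat worth being aware of, though it does not affect the validity of your argument in the general bounded-domain setting intended here: the first inequality as stated can fail in the \emph{periodic} channel $\Omega=(0,L)^2\times(0,h)$ used later in the paper, since a constant horizontal field like $(1,0,0)$ lies in $H$ but has $\grad\vv=0$ and nonzero trace. Your mean-zero computation breaks down there because the periodic faces contribute boundary terms that do not cancel when the test function is $c\cdot x$. This is not a flaw in your proof---the corollary is stated (and cited) for a genuine bounded domain with boundary $\Gamma=\partial\Omega$---and in any case the paper only ever invokes the \emph{second} inequality (for $\curl\vv$ with $\vv\in V$), whose mean-zero argument does survive periodicity.
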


We recall that \ $H = \set{\vv \in L^2(\Omega) |
            \, \dv \vv = 0, \, \vv \cdot \n = 0 \text{ on } \Gamma}$, and \
    $V = \set{\vv \in H^1_0 (\Omega) | \, \dv \vv = 0}$.

Lastly, we state for the reader's convenience Young's inequality for multiple
factors:

\begin{lemma}\label{L:Youngs}
    Let $a, b, c \ge 0$, $p, q, p_1, q_1 \ge 1$ with $p^{-1} + q^{-1} = p_1^{-1}
    + p_2^{-1} = 1$. Then
    \begin{align*}
        a b
            &\le \frac{a^p}{p} + \frac{b^q}{q}, \\
        a b c
            &\le \frac{a^{p p_1}}{p p_1}
                + \frac{b^{p p_2}}{p p_2}
                + \frac{c^q}{q}.
    \end{align*}
\end{lemma}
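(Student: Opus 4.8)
The plan is to deduce both inequalities from the classical two–factor Young inequality, which itself is just convexity of the exponential. First I would record the case $ab \le \frac{a^p}{p} + \frac{b^q}{q}$ for $a,b \ge 0$ with $\frac1p+\frac1q = 1$: if $a = 0$ or $b = 0$ the bound is trivial, and otherwise one writes $ab = \exp\!\big(\frac1p \log a^p + \frac1q \log b^q\big)$ and applies Jensen's inequality for the convex function $t \mapsto e^t$, using $\frac1p + \frac1q = 1$, to obtain $ab \le \frac1p e^{\log a^p} + \frac1q e^{\log b^q} = \frac{a^p}{p} + \frac{b^q}{q}$. This is the first displayed inequality of the lemma.

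For the three–factor inequality I would apply the two–factor case twice. Writing $abc = (ab)\,c$ and using the exponent pair $(p,q)$ gives $abc \le \frac{(ab)^p}{p} + \frac{c^q}{q}$. Next, since $\frac{1}{p_1} + \frac{1}{p_2} = 1$, applying the two–factor inequality to the nonnegative numbers $a^p$ and $b^p$ with the exponent pair $(p_1,p_2)$ yields $(ab)^p = a^p b^p \le \frac{(a^p)^{p_1}}{p_1} + \frac{(b^p)^{p_2}}{p_2} = \frac{a^{p p_1}}{p_1} + \frac{b^{p p_2}}{p_2}$. Substituting this into the previous estimate and distributing the factor $\frac1p$ produces $abc \le \frac{a^{p p_1}}{p\,p_1} + \frac{b^{p p_2}}{p\,p_2} + \frac{c^q}{q}$, which is exactly the second claim (the exponent written $q_1$ in the hypothesis should read $p_2$, so that $\frac{1}{p_1} + \frac{1}{p_2} = 1$).

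Since every step is elementary, there is no real obstacle here; the only thing needing attention is the bookkeeping of the exponent identities and the separate, trivial treatment of the degenerate cases in which one of $a, b, c$ vanishes. An equivalent and perhaps cleaner route would be to invoke the weighted arithmetic–geometric mean inequality directly with weights $\big(\tfrac{1}{p p_1}, \tfrac{1}{p p_2}, \tfrac1q\big)$, whose sum is $1$, applied to $a^{p p_1}, b^{p p_2}, c^{q}$; I would likely present the iterated two–factor argument, as it keeps the proof self-contained.
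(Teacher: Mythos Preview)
Your proof is correct. Note, however, that the paper does not actually prove this lemma: it is simply stated ``for the reader's convenience'' with no argument supplied, as it is the standard Young inequality together with an elementary iteration. Your iterated two-factor argument (and the equivalent weighted AM--GM route you mention) is exactly the expected justification, and you also correctly flag the typo $q_1 \to p_2$ in the hypothesis.
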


%
%
\section{Vorticity accumulation on the boundary}

We close the Appendices with a short discussion of a result that is used
to derive a quantitative estimate for the vorticity production at the boundary
that persists in the vanishing viscosity limit.

It is shown in \cite{Kel08} that the classical vanishing viscosity limit, that
is, convergence of the NSE solution to an EE solution in the energy norm,
holds if and only if vorticity accumulates on the boundary in the manner
described in \cref{T:Kel}. (The specific 3D form of this condition is derived in
\cite{Kel14Observations}.) In \cref{T:Kel}, $\mu$ is the measure supported on
$\Gamma$ for which $\mu\vert_\Gamma$ corresponds to the normalized Lebesgue
measure on $\Gamma$ (arc length in 2D, surface area in 3D). Then $\mu$ is also a
member of $H^1(\Omega)^*$, the dual space to $H^1(\Omega)$.

\begin{theorem}\label{T:Kel}
 Let $\Omega$ be a  bounded domain in $\R^2$ or $\R^3$ of class $C^2$. Then,
    \begin{align*}
        2D: \quad
            \uu^\eps &\to \uu^0 \text{ in } L^\iny(0, T; H) \\
            &\iff
            \curl \uu^\eps \to \curl \uu^0 - (\uu^0 \cdot \BoldTau) \mu
                \text{ in } L^\iny(0, T; (H^1(\Omega)^2)^*), \\
        3D: \quad
        \uu^\eps &\to \uu^0 \text{ in } L^\iny(0, T; H) \\
            &\iff
            \curl \uu^\eps \to \curl \uu^0 + (\uu^0 \times \n) \mu
                \text{ in } L^\iny(0, T; (H^1(\Omega)^3)^*).
    \end{align*}
\end{theorem}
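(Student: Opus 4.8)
The natural tool is duality: pair the vorticities against fields $\varphi\in H^1(\Omega)$, integrate by parts, and read off the boundary contribution produced by the mismatch of boundary conditions. I treat the $3$D case; the $2$D case is identical after the obvious planar translation (scalar stream functions, $\nabla^\perp$ in place of $\curl$, and the tangential trace $\uu^0\cdot\BoldTau$ in place of $\uu^0\times\n$). Using $\dv(\uu\times\varphi)=\curl\uu\cdot\varphi-\uu\cdot\curl\varphi$ and $(\uu\times\varphi)\cdot\n=-(\uu\times\n)\cdot\varphi$, one gets for a.e.\ $t$
\[
\langle\curl\uu^\eps(t),\varphi\rangle=\int_\Omega\uu^\eps(t)\cdot\curl\varphi\,d\X,
\qquad
\langle\curl\uu^0(t),\varphi\rangle=\int_\Omega\uu^0(t)\cdot\curl\varphi\,d\X-\int_\Gamma(\uu^0(t)\times\n)\cdot\varphi\,dS,
\]
the boundary term being absent in the first identity because $\uu^\eps=0$ on $\Gamma$, and present in the second only because $\uu^0$ satisfies merely $\uu^0\cdot\n=0$. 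By the trace theorem (\cref{L:Trace}) the functional $\varphi\mapsto\int_\Gamma(\uu^0\times\n)\cdot\varphi\,dS$ is bounded on $H^1(\Omega)$ and equals, up to the normalizing constant $|\Gamma|$ in the definition of $\mu$, the pairing $\langle(\uu^0\times\n)\mu,\varphi\rangle$. Subtracting and writing $\ww^\eps:=\uu^\eps-\uu^0$ (which lies in $H$ for a.e.\ $t$, since $\uu^\eps(t)\in V$ and $\uu^0(t)\in H$), we obtain
\[
\big\langle\curl\uu^\eps(t)-\curl\uu^0(t)-(\uu^0(t)\times\n)\mu,\ \varphi\big\rangle=\int_\Omega\ww^\eps(t)\cdot\curl\varphi\,d\X .
\]
Hence the right-hand convergence in the theorem is precisely the statement that $N^\eps(t):=\sup_{\|\varphi\|_{H^1(\Omega)}\le 1}\big|\int_\Omega\ww^\eps(t)\cdot\curl\varphi\,d\X\big|\to0$ uniformly in $t\in(0,T)$, so the whole theorem reduces to showing that $\sup_t N^\eps(t)\to0$ if and only if $\|\ww^\eps\|_{L^\infty(0,T;L^2(\Omega))}\to0$.

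\textbf{The easy implication.} If $\|\ww^\eps\|_{L^\infty(0,T;L^2(\Omega))}\to0$ then, by Cauchy--Schwarz, $\big|\int_\Omega\ww^\eps(t)\cdot\curl\varphi\,d\X\big|\le\|\ww^\eps(t)\|_{L^2(\Omega)}\|\varphi\|_{H^1(\Omega)}$, so $\sup_t N^\eps(t)\le\|\ww^\eps\|_{L^\infty(0,T;L^2(\Omega))}\to0$. Thus the classical vanishing viscosity limit forces the asserted vorticity convergence.

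\textbf{The hard implication.} Conversely, one must control $\|\ww^\eps(t)\|_{L^2}$ by $N^\eps(t)$. The obstruction is that as $\varphi$ ranges over $H^1(\Omega)$ the fields $\curl\varphi$ do not span $L^2(\Omega)$: they are solenoidal, and after Leray projection one reaches only a dense subspace of $H$. One therefore needs a quantitative converse — a \emph{bounded vector-potential lemma}: for every $\ww\in H$ there is $\varphi_\ww\in H^1(\Omega)$ with $\int_\Omega\ww\cdot\curl\varphi_\ww\,d\X=\|\ww\|_{L^2(\Omega)}^2$ and $\|\varphi_\ww\|_{H^1(\Omega)}\le C(\Omega)\|\ww\|_{L^2(\Omega)}$ (in $3$D, solve $\curl\varphi=\ww+\nabla q$, $\dv\varphi=0$, $\varphi\cdot\n=0$ on $\Gamma$ with elliptic $H^1$ control; in $2$D take $\varphi_\ww=\nabla^\perp\psi$ with $\Delta\psi=\curl\ww$, $\psi$ locally constant on $\Gamma$, so $\|\psi\|_{H^2}\le C\|\curl\ww\|_{H^{-1}}\le C\|\ww\|_{L^2}$). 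Granting it, for a.e.\ $t$,
\[
\|\ww^\eps(t)\|_{L^2(\Omega)}^2=\int_\Omega\ww^\eps(t)\cdot\curl\varphi_{\ww^\eps(t)}\,d\X\le N^\eps(t)\,\|\varphi_{\ww^\eps(t)}\|_{H^1(\Omega)}\le C(\Omega)\,N^\eps(t)\,\|\ww^\eps(t)\|_{L^2(\Omega)},
\]
so $\|\ww^\eps(t)\|_{L^2(\Omega)}\le C(\Omega)N^\eps(t)$; taking $\sup_t$ gives $\|\ww^\eps\|_{L^\infty(0,T;L^2(\Omega))}\to0$, the classical vanishing viscosity limit. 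Combined with the easy direction, this establishes the equivalence, uniformly in $t$.

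\textbf{Main obstacle.} The substantive step is the bounded vector-potential lemma on a general bounded $C^2$ domain, i.e.\ surjectivity of $\varphi\mapsto P_H\curl\varphi$ from $H^1(\Omega)$ onto $H$ together with an $H^1$ bound on a right inverse whose constant depends only on $\Omega$. On simply connected domains this is standard elliptic theory for the $\curl$--$\dv$ system; on multiply connected domains (the periodic channel and especially the annular pipe) one must additionally dispose of the finitely many cohomological flux conditions, which is exactly where the gradient freedom $\nabla q$ is used and where the argument of \cite{Kel08}, with its $3$D refinement in \cite{Kel14Observations}, does the real work. The remaining ingredients — the integration-by-parts identities, the measurability of $t\mapsto N^\eps(t)$ and $t\mapsto\|\ww^\eps(t)\|_{L^2}$, the identification of the surface-measure functional with $(\uu^0\times\n)\mu$ including the normalization, and the sign/translation bookkeeping in the $2$D case — are routine.
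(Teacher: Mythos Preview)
The paper does not prove \cref{T:Kel}; it is stated as a quotation from \cite{Kel08} (the 2D case) and \cite{Kel14Observations} (the 3D extension), so there is no in-paper argument to compare against. Your outline is precisely the argument of those references: the integration-by-parts identity
\[
\big\langle\curl\uu^\eps-\curl\uu^0-(\uu^0\times\n)\mu,\ \varphi\big\rangle
=\int_\Omega(\uu^\eps-\uu^0)\cdot\curl\varphi,
\]
obtained from $\dv(\uu\times\varphi)=\curl\uu\cdot\varphi-\uu\cdot\curl\varphi$ together with $\uu^\eps|_\Gamma=0$ versus $\uu^0\cdot\n|_\Gamma=0$, reduces the equivalence to a statement about $\ww^\eps=\uu^\eps-\uu^0$; one direction is Cauchy--Schwarz, and the other is exactly the bounded vector-potential construction you describe. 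Your identification of that construction as the only substantive step, and of the cohomological obstruction on multiply connected domains, is accurate and matches what \cite{Kel08,Kel14Observations} do.

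One small slip in the 2D sketch: there the test function is a \emph{scalar} (the vorticity is scalar and the dual space is that of scalar $H^1$), so writing ``$\varphi_\ww=\nabla^\perp\psi$'' mixes types. The correct choice is simply $\varphi=\psi$, the stream function of $\ww$ (locally constant on each boundary component), which gives $\int_\Omega\ww\cdot\nabla^\perp\varphi=\|\nabla^\perp\psi\|_{L^2}^2=\|\ww\|_{L^2}^2$ and $\|\varphi\|_{H^1}\le C\|\ww\|_{L^2}$ immediately. This is cosmetic and does not affect the validity of your argument.
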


The following simple corollary of \cref{T:Kel} is derived in \cite{Kel14Observations}.

\begin{cor}\label{C:ConvKelMeasure}
    Under the hypotheses of \cref{T:Kel}, if $\uu^\eps \to \uu^0 \text{ in }
L^\iny(0, T; H)$ and
    $\norm{\curl \uu^\eps - \curl \uu^0}_{L^\iny(0, T; L^1(\Omega))}
    \le \kappa_T$ then
    \begin{align*}
        2D: \quad
            &\curl \uu^\eps \to \curl \uu^0 - (\uu^0 \cdot \BoldTau) \mu
                \text{ in } L^\iny(0, T; \Cal{M}(\ol{\Omega})), \\
        3D: \quad
            &\curl \uu^\eps \to \curl \uu^0 + (\uu^0 \times \n) \mu
                \text{ in } L^\iny(0, T; \Cal{M}(\ol{\Omega})),
    \end{align*}
    where $\Cal{M}(\ol{\Omega})$ is the space of Radon measures on
$\ol{\Omega}$,
\end{cor}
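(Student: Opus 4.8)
The plan is to upgrade the convergence furnished by \cref{T:Kel} — which holds in $L^\iny(0, T; (H^1(\Omega)^3)^*)$ — to weak-$*$ convergence in $L^\iny(0, T; \Cal{M}(\ol\Omega)) = L^\iny(0,T; C(\ol\Omega)^*)$, using the uniform $L^1$ bound in the hypothesis to dominate the contribution of test functions that are only continuous. I will describe the 3D case; the 2D case is identical with $(\uu^0\times\n)$ replaced by $-(\uu^0\cdot\BoldTau)$. Write $m^\eps(t) := \curl\uu^\eps(t)$ and $m(t) := \curl\uu^0(t) + (\uu^0(t)\times\n)\mu$, and note first that $m^\eps(t)$ and $m(t)$ are genuine elements of $\Cal{M}(\ol\Omega)$ of mass bounded uniformly in $\eps$ and $t$: indeed $m^\eps(t) - m(t) = (\curl\uu^\eps(t) - \curl\uu^0(t)) - (\uu^0(t)\times\n)\mu$, whose first summand lies in $L^1(\Omega)$ with norm at most $\kap_T$ by hypothesis, and whose second summand is a finite measure supported on $\Gamma$ with total mass controlled by the trace of $\uu^0$ on $\Gamma$ — bounded uniformly in $t$ under the regularity of $\uu^0$ implicit in \cref{T:Kel}, since $\norm{\mu}_{\Cal{M}(\ol\Omega)} = 1$. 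Hence there is $\kap_T$, independent of $\eps$ and $t$, with $\norm{m^\eps(t) - m(t)}_{\Cal{M}(\ol\Omega)} \le \kap_T$ for a.e.\ $t\in(0,T)$.

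The first substantive step is to reconcile the two dual pairings involved. For $\va\in C^\iny(\ol\Omega)$, which in particular belongs to $H^1(\Omega)$, integration by parts combined with $\uu^\eps = 0$ on $\Gamma$ shows that the action of $m^\eps(t)$ as an element of $(H^1(\Omega)^3)^*$ — namely $\va\mapsto\int_\Omega \uu^\eps(t)\cdot\curl\va\,dx$ — equals $\int_\Omega\curl\uu^\eps(t)\cdot\va\,dx$, which is precisely its action as a measure tested against $\va\in C(\ol\Omega)$; the same identity holds for $m(t)$ in terms of $\curl\uu^0$ and the boundary term $(\uu^0\times\n)\mu$. Therefore, applying \cref{T:Kel} (which is available because $\uu^\eps\to\uu^0$ in $L^\iny(0,T;H)$), for each fixed $\va\in C^\iny(\ol\Omega)$ one has, as $\eps\to 0$,
\begin{equation*}
    \operatorname*{ess\,sup}_{t\in(0,T)} \abs{\langle m^\eps(t) - m(t),\,\va\rangle}
        \le \norm{m^\eps - m}_{L^\iny(0,T;(H^1(\Omega)^3)^*)}\,\norm{\va}_{H^1(\Omega)} \longrightarrow 0,
\end{equation*}
where the bracket denotes the $\Cal{M}(\ol\Omega)$--$C(\ol\Omega)$ pairing.

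Then I would run the usual density argument. Let $\va\in C(\ol\Omega)$ and $\del>0$; by Stone--Weierstrass (or density of $C^\iny(\ol\Omega)$ in $C(\ol\Omega)$) choose $\va_\del\in C^\iny(\ol\Omega)$ with $\norm{\va - \va_\del}_{C(\ol\Omega)}<\del$. Splitting $\langle m^\eps(t) - m(t),\,\va\rangle$ as $\langle m^\eps(t) - m(t),\,\va_\del\rangle + \langle m^\eps(t) - m(t),\,\va - \va_\del\rangle$ and bounding the second piece by $\norm{m^\eps(t) - m(t)}_{\Cal{M}(\ol\Omega)}\,\norm{\va - \va_\del}_{C(\ol\Omega)} \le \kap_T\,\del$, we obtain
\begin{equation*}
    \operatorname*{ess\,sup}_{t\in(0,T)} \abs{\langle m^\eps(t) - m(t),\,\va\rangle}
        \le \operatorname*{ess\,sup}_{t\in(0,T)} \abs{\langle m^\eps(t) - m(t),\,\va_\del\rangle} + \kap_T\,\del.
\end{equation*}
Letting $\eps\to 0$ using the previous step and then $\del\to 0$ yields $\operatorname*{ess\,sup}_{t\in(0,T)}\abs{\langle m^\eps(t) - m(t),\,\va\rangle}\to 0$ for every $\va\in C(\ol\Omega)$, which is exactly convergence of $\curl\uu^\eps$ to $\curl\uu^0 + (\uu^0\times\n)\mu$ in $L^\iny(0,T;\Cal{M}(\ol\Omega))$.

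The analytic content lies entirely in \cref{T:Kel}; the only delicate points — and the nearest thing to an obstacle — are checking that the $(H^1(\Omega))^*$ pairing in which \cref{T:Kel} is phrased coincides with the $C(\ol\Omega)^*$ pairing on smooth test functions, and ensuring that every bound is uniform in $t$, so that the conclusion genuinely holds in the $L^\iny$-in-time sense and not merely for a.e.\ fixed $t$.
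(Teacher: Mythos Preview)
Your argument is correct and is precisely the natural proof: use \cref{T:Kel} to get convergence against smooth test functions (for which the $(H^1)^*$ and $\Cal{M}(\ol\Omega)$ pairings coincide), then pass to all of $C(\ol\Omega)$ by density using the uniform total-variation bound supplied by the $L^1$ hypothesis. The paper itself does not prove the corollary; it simply cites \cite{Kel14Observations}, where the argument is essentially the one you have written.

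One small comment: the integration-by-parts step you include to identify the two pairings is not really needed at this level. Once one knows that $\curl\uu^\eps(t)$, $\curl\uu^0(t)$ are $L^1(\Omega)$ functions and $(\uu^0\times\n)\mu$ is a finite measure on $\Gamma$, both the $(H^1(\Omega))^*$ pairing and the $\Cal{M}(\ol\Omega)$--$C(\ol\Omega)$ pairing with a smooth $\va$ are simply integration against $\va$, so they agree automatically. The integration by parts is what underlies the proof of \cref{T:Kel} itself, not the passage from \cref{T:Kel} to the corollary. Otherwise the write-up is clean and the uniformity-in-$t$ is handled correctly.
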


Above, $\blds{\tau}$ is the unit tanget vector to the boundary,
defined as $J \, \blds{n}$, where $\blds{n}$ is the unit outer normal, and $J$
is rotation counterclockwise by $\pi/2$.

The regularity of the boundary $\pa\Omega$ in the results above is sufficient
for the applications in this manuscript, but it is not expected to be optimal.
In fact, results of De Giorgi on weak convergence of gradients suggest that
similar statements hold for much rougher domains, namely sets of finite
perimenter. We do not investigate this point further in this work.

%
%
\section*{Acknowledgments}

The authors acknowledge the support in this work of various grants and institutions, as follows. G.-M. Gie: the Research - RI Grant,
{Office of the Executive Vice President for Research and Innovation}, University of Louisville; G.-M. Gie and J. Kelliher: US National Science Foundation (NSF) grants DMS-1009545 and DMS-1212141; M. Lopes Filho: CNPq grants \#200434/2011-0 and \#306886/2014-6; A. Mazzucato: NSF grants DMS-1009713, DMS-1009714, and DMS-1312727; H. Nussenzveig Lopes: CAPES grant BEX 6649/10-6, CNPq grant \#307918/2014-9, and  FAPERJ grant \# E-26/202.950/2015. J. Kelliher, M. Lopes Filho, and H. Nussenzveig Lopes: the National Institute for Pure and Applied Mathematics (IMPA) in Rio de Janeiro in residence Spring 2014; M. Lopes Filho and H. Nussenzveig Lopes: the Dept. of Mathematics at the University of California, Riverside in residence Fall 2011; A. Mazzucato: the Institute for Pure and Applied Mathematics (IPAM) at UCLA in residence Fall 2014 (IPAM receives major support from the NSF); the second through fifth authors: the Institute for Computational and Experimental Research in Mathematics (ICERM) in Providence, RI, in residence Spring 2017 (ICERM receives major support  from NSF under Grant No. DMS-1439786). The authors thank Gregory Eyink for pointing out a useful reference.

\bibliographystyle{plain}

\begin{thebibliography}{10}

\bibitem{Asano88}
Kiyoshi Asano.
\newblock A note on the abstract {C}auchy-{K}owalewski theorem.
\newblock {\em Proc. Japan Acad. Ser. A Math. Sci.}, 64(4):102--105, 1988.

\bibitem{Bardos1972}
C.~Bardos.
\newblock Existence et unicit\'e de la solution de l'\'equation d'{E}uler en
  dimension deux.
\newblock {\em J. Math. Anal. Appl.}, 40:769--790, 1972.

\bibitem{BLNNT13}
C.~Bardos, M.~C. Lopes~Filho, Dongjuan Niu, H.~J. Nussenzveig~Lopes, and E.~S.
  Titi.
\newblock Stability of two-dimensional viscous incompressible flows under
  three-dimensional perturbations and inviscid symmetry breaking.
\newblock {\em SIAM J. Math. Anal.}, 45(3):1871--1885, 2013.

\bibitem{Bat99}
G.~K. Batchelor.
\newblock {\em An introduction to fluid dynamics}.
\newblock Cambridge Mathematical Library. Cambridge University Press,
  Cambridge, paperback edition, 1999.

\bibitem{BJMpreprint1}
J.~{Bedrossian}, P.~{Germain}, and N.~{Masmoudi}.
\newblock {Dynamics near the subcritical transition of the 3D Couette flow I:
  Below threshold case}.
\newblock {\em ArXiv e-prints}, June 2015.

\bibitem{BJMpreprint2}
J.~{Bedrossian}, P.~{Germain}, and N.~{Masmoudi}.
\newblock {Dynamics near the subcritical transition of the 3D Couette flow II:
  Above threshold case}.
\newblock {\em ArXiv e-prints}, June 2015.

\bibitem{BM14}
Jacob Bedrossian and Nader Masmoudi.
\newblock Asymptotic stability for the {C}ouette flow in the 2{D} {E}uler
  equations.
\newblock {\em Appl. Math. Res. Express. AMRX}, (1):157--175, 2014.

\bibitem{daViegaCrispo2010}
H.~Beir{\~a}o~da Veiga and F.~Crispo.
\newblock Sharp inviscid limit results under {N}avier type boundary conditions.
  {A}n {$L^p$} theory.
\newblock {\em J. Math. Fluid Mech.}, 12(3):397--411, 2010.

\bibitem{daViegaCrispo2011A}
H.~Beir{\~a}o~da Veiga and F.~Crispo.
\newblock A missed persistence property for the {E}uler equations, and its
  effect on inviscid limits.
\newblock {\em Nonlinearity}, 25(6):1661--1669, 2011.

\bibitem{BelloutNeustupa3}
Hamid Bellout and Ji{\v{r}}{\'{\i}} Neustupa.
\newblock A {N}avier-{S}tokes approximation of the 3{D} {E}uler equation with
  the zero flux on the boundary.
\newblock {\em J. Math. Fluid Mech.}, 10(4):531--553, 2008.

\bibitem{BelloutNeustupa4}
Hamid Bellout, Ji{\v{r}}{\'{\i}} Neustupa, and Patrick Penel.
\newblock On the {N}avier-{S}tokes equation with boundary conditions based on
  vorticity.
\newblock {\em Math. Nachr.}, 269/270:59--72, 2004.

\bibitem{BelloutNeustupa2}
Hamid Bellout, Ji{\v{r}}{\'{\i}} Neustupa, and Patrick Penel.
\newblock On viscosity-continuous solutions of the {E}uler and
  {N}avier-{S}tokes equations with a {N}avier-type boundary condition.
\newblock {\em C. R. Math. Acad. Sci. Paris}, 347(19-20):1141--1146, 2009.

\bibitem{BelloutNeustupa1}
Hamid Bellout, Ji{\v{r}}{\'{\i}} Neustupa, and Patrick Penel.
\newblock On a {$\nu$}-continuous family of strong solutions to the {E}uler or
  {N}avier-{S}tokes equations with the {N}avier-type boundary condition.
\newblock {\em Discrete Contin. Dyn. Syst.}, 27(4):1353--1373, 2010.

\bibitem{BW02}
Jerry~L. Bona and Jiahong Wu.
\newblock The zero-viscosity limit of the 2{D} {N}avier-{S}tokes equations.
\newblock {\em Stud. Appl. Math.}, 109(4):265--278, 2002.

\bibitem{Can84}
John~Rozier Cannon.
\newblock {\em The one-dimensional heat equation}, volume~23 of {\em
  Encyclopedia of Mathematics and its Applications}.
\newblock Addison-Wesley Publishing Company Advanced Book Program, Reading, MA,
  1984.
\newblock With a foreword by Felix E. Browder.

\bibitem{CLM13}
M.~Cannone, M.~C. Lombardo, and M.~Sammartino.
\newblock Well-posedness of {P}randtl equations with non-compatible data.
\newblock {\em Nonlinearity}, 26(12):3077--3100, 2013.

\bibitem{CMR}
Thierry Clopeau, Andro Mikeli{\'c}, and Raoul Robert.
\newblock On the vanishing viscosity limit for the {$2{\rm D}$} incompressible
  {N}avier-{S}tokes equations with the friction type boundary conditions.
\newblock {\em Nonlinearity}, 11(6):1625--1636, 1998.

\bibitem{CEIVPreprint}
P.~{Constantin}, T.~{Elgindi}, M.~{Ignatova}, and V.~{Vicol}.
\newblock {Remarks on the inviscid limit for the Navier-Stokes equations for
  uniformly bounded velocity fields}.
\newblock {\em ArXiv e-prints}, December 2015.

\bibitem{ConstantinFoiasBook}
Peter Constantin and Ciprian Foias.
\newblock {\em Navier-{S}tokes equations}.
\newblock Chicago Lectures in Mathematics. University of Chicago Press,
  Chicago, IL, 1988.

\bibitem{EE97}
Weinan E and Bjorn Engquist.
\newblock Blowup of solutions of the unsteady {P}randtl's equation.
\newblock {\em Comm. Pure Appl. Math.}, 50(12):1287--1293, 1997.

\bibitem{Evans}
Lawrence~C. Evans.
\newblock {\em Partial differential equations}, volume~19 of {\em Graduate
  Studies in Mathematics}.
\newblock American Mathematical Society, Providence, RI, second edition, 2010.

\bibitem{GSS09}
F.~Gargano, M.~Sammartino, and V.~Sciacca.
\newblock Singularity formation for {P}randtl's equations.
\newblock {\em Phys. D}, 238(19):1975--1991, 2009.

\bibitem{G-VN12}
D.~G{\'e}rard-Varet and T.~Nguyen.
\newblock Remarks on the ill-posedness of the {P}randtl equation.
\newblock {\em Asymptot. Anal.}, 77(1-2):71--88, 2012.

\bibitem{GVD10}
David G{\'e}rard-Varet and Emmanuel Dormy.
\newblock On the ill-posedness of the {P}randtl equation.
\newblock {\em J. Amer. Math. Soc.}, 23(2):591--609, 2010.

\bibitem{Gie13}
Gung-Min Gie.
\newblock Asymptotic expansion of the {S}tokes solutions at small viscosity:
  the case of non-compatible initial data.
\newblock {\em Commun. Math. Sci.}, 12(2):383--400, 2014.

\bibitem{GHT10}
Gung-Min Gie, Makram Hamouda, and Roger Temam.
\newblock Boundary layers in smooth curvilinear domains: parabolic problems.
\newblock {\em Discrete Contin. Dyn. Syst.}, 26(4):1213--1240, 2010.

\bibitem{GHT12}
Gung-Min Gie, Makram Hamouda, and Roger Temam.
\newblock Asymptotic analysis of the {N}avier-{S}tokes equations in a curved
  domain with a non-characteristic boundary.
\newblock {\em Netw. Heterog. Media}, 7(4):741--766, 2012.

\bibitem{GJ13}
Gung-Min Gie and Chang-Yeol Jung.
\newblock Vorticity layers of the 2{D} {N}avier-{S}tokes equations with a slip
  type boundary condition.
\newblock {\em Asymptot. Anal.}, 84(1-2):17--33, 2013.

\bibitem{GK2012}
Gung-Min Gie and James~P. Kelliher.
\newblock Boundary layer analysis of the {N}avier-{S}tokes equations with
  generalized {N}avier boundary conditions.
\newblock {\em J. Differential Equations}, 253(6):1862--1892, 2012.

\bibitem{GGNpreprint}
E.~{Grenier}, Y.~{Guo}, and T.~T. {Nguyen}.
\newblock {Spectral stability of Prandtl boundary layers: an overview}.
\newblock {\em ArXiv e-prints}, June 2014.

\bibitem{Grenier00}
Emmanuel Grenier.
\newblock On the nonlinear instability of {E}uler and {P}randtl equations.
\newblock {\em Comm. Pure Appl. Math.}, 53(9):1067--1091, 2000.

\bibitem{GN11}
Yan Guo and Toan Nguyen.
\newblock A note on {P}randtl boundary layers.
\newblock {\em Comm. Pure Appl. Math.}, 64(10):1416--1438, 2011.

\bibitem{HT07}
Makram Hamouda and Roger Temam.
\newblock Some singular perturbation problems related to the {N}avier-{S}tokes
  equations.
\newblock In {\em Advances in deterministic and stochastic analysis}, pages
  197--227. World Sci. Publ., Hackensack, NJ, 2007.

\bibitem{HMNW11}
Daozhi Han, Anna~L. Mazzucato, Dongjuan Niu, and Xiaoming Wang.
\newblock Boundary layer for a class of nonlinear pipe flow.
\newblock {\em J. Differential Equations}, 252(12):6387--6413, 2012.

\bibitem{IP06}
Drago{\c{s}} Iftimie and Gabriela Planas.
\newblock Inviscid limits for the {N}avier-{S}tokes equations with {N}avier
  friction boundary conditions.
\newblock {\em Nonlinearity}, 19(4):899--918, 2006.

\bibitem{IS10}
Drago{\c{s}} Iftimie and Franck Sueur.
\newblock Viscous boundary layers for the navier-stokes equations with the
  navier slip conditions.
\newblock {\em Arch. Rational Mech. Anal.}, Online First, 20, 2010.

\bibitem{JPT11}
Chang-Yeol Jung, Madalina Petcu, and Roger Temam.
\newblock Singular perturbation analysis on a homogeneous ocean circulation
  model.
\newblock {\em Anal. Appl. (Singap.)}, 9(3):275--313, 2011.

\bibitem{Kato1972}
Tosio Kato.
\newblock Nonstationary flows of viscous and ideal fluids in {${\bf R}\sp{3}$}.
\newblock {\em J. Functional Analysis}, 9:296--305, 1972.

\bibitem{Kato1975}
Tosio Kato.
\newblock Quasi-linear equations of evolution, with applications to partial
  differential equations.
\newblock In {\em Spectral theory and differential equations (Proc. Sympos.,
  Dundee, 1974; dedicated to Konrad J\"orgens)}, pages 25--70. Lecture Notes in
  Math., Vol. 448. Springer, Berlin, 1975.

\bibitem{Kato1983}
Tosio Kato.
\newblock Remarks on zero viscosity limit for nonstationary {N}avier-{S}tokes
  flows with boundary.
\newblock In {\em Seminar on nonlinear partial differential equations
  (Berkeley, Calif., 1983)}, volume~2 of {\em Math. Sci. Res. Inst. Publ.},
  pages 85--98. Springer, New York, 1984.

\bibitem{KNavier}
James~P. Kelliher.
\newblock Navier-{S}tokes equations with {N}avier boundary conditions for a
  bounded domain in the plane.
\newblock {\em SIAM Math Analysis}, 38(1):210--232, 2006.

\bibitem{Kel08}
James~P. Kelliher.
\newblock Vanishing viscosity and the accumulation of vorticity on the
  boundary.
\newblock {\em Commun. Math. Sci.}, 6(4):869--880, 2008.

\bibitem{K2006Disk}
James~P. Kelliher.
\newblock On the vanishing viscosity limit in a disk.
\newblock {\em Math. Ann.}, 343(3):701--726, 2009.

\bibitem{Kel14Observations}
James~P. Kelliher.
\newblock Observations on the vanishing viscosity limit.
\newblock {\em Preprint}, 2014.

\bibitem{KMVW14}
Igor Kukavica, Nader Masmoudi, Vlad Vicol, and Tak~Kwong Wong.
\newblock On the local well-posedness of the {P}randtl and hydrostatic {E}uler
  equations with multiple monotonicity regions.
\newblock {\em SIAM J. Math. Anal.}, 46(6):3865--3890, 2014.

\bibitem{KV13}
Igor Kukavica and Vlad Vicol.
\newblock On the local existence of analytic solutions to the {P}randtl
  boundary layer equations.
\newblock {\em Commun. Math. Sci.}, 11(1):269--292, 2013.

\bibitem{Lighthill63}
M.~J. Lighthill.
\newblock Introduction -- boundary layer theory.
\newblock In L.~Rosenhead, editor, {\em Laminar Boundary Layers}, chapter~II,
  pages 46--113. Oxford University Press, Oxford, 1963.

\bibitem{JL1969}
J.-L. Lions.
\newblock {\em Quelques m\'ethodes de r\'esolution des probl\`emes aux limites
  non lin\'eaires}.
\newblock Dunod, 1969.

\bibitem{Lions1973}
J.~L. Lions.
\newblock {\em Perturbations singulieres dans les problemes aux limites et en
  controle optimal}.
\newblock Lecture notes in mathematics. no.323. Springer-Verlag, Berlin, New
  York, 1973.

\bibitem{L1996}
Pierre-Louis Lions.
\newblock {\em Mathematical topics in fluid mechanics. {V}ol. 1}, volume~3 of
  {\em Oxford Lecture Series in Mathematics and its Applications}.
\newblock The Clarendon Press Oxford University Press, New York, 1996.

\bibitem{LCS03}
Maria~Carmela Lombardo, Marco Cannone, and Marco Sammartino.
\newblock Well-posedness of the boundary layer equations.
\newblock {\em SIAM J. Math. Anal.}, 35(4):987--1004 (electronic), 2003.

\bibitem{LMNT08}
M.~C. Lopes~Filho, A.~L. Mazzucato, H.~J. Nussenzveig~Lopes, and Michael
  Taylor.
\newblock Vanishing viscosity limits and boundary layers for circularly
  symmetric 2{D} flows.
\newblock {\em Bull. Braz. Math. Soc. (N.S.)}, 39(4):471--513, 2008.

\bibitem{FLP}
M.~C. Lopes~Filho, H.~J. Nussenzveig~Lopes, and G.~Planas.
\newblock On the inviscid limit for two-dimensional incompressible flow with
  {N}avier friction condition.
\newblock {\em SIAM J. Math. Anal.}, 36(4):1130--1141, 2005.

\bibitem{LLT00}
Milton~C. Lopes~Filho, Helena~J. Nussenzveig~Lopes, and Eitan Tadmor.
\newblock Approximate solutions of the incompressible {E}uler equations with no
  concentrations.
\newblock {\em Ann. Inst. H. Poincar\'e Anal. Non Lin\'eaire}, 17(3):371--412,
  2000.

\bibitem{Maekawa2012}
Yasunori Maekawa.
\newblock On the inviscid limit problem of the vorticity equations for viscous
  incompressible flows in the half plane.
\newblock {\em Preprint}, 2012.

\bibitem{Masmoudi2007}
Nader Masmoudi.
\newblock Remarks about the inviscid limit of the {N}avier-{S}tokes system.
\newblock {\em Comm. Math. Phys.}, 270(3):777--788, 2007.

\bibitem{MasmoudiRousset2010}
Nader Masmoudi and Fr{\'e}d{\'e}ric Rousset.
\newblock Uniform regularity for the {N}avier-{S}tokes equation with {N}avier
  boundary condition.
\newblock {\em Arch. Ration. Mech. Anal.}, 203(2):529--575, 2012.

\bibitem{Matsui1994}
Shin'ya Matsui.
\newblock Example of zero viscosity limit for two-dimensional nonstationary
  {N}avier-{S}tokes flows with boundary.
\newblock {\em Japan J. Indust. Appl. Math.}, 11(1):155--170, 1994.

\bibitem{MT11}
A.~Mazzucato and M.~Taylor.
\newblock Vanishing viscosity limits for a class of circular pipe flows.
\newblock {\em Comm. Partial Differential Equations}, 36(2):328--361, 2011.

\bibitem{MNW10}
Anna Mazzucato, Dongjuan Niu, and Xiaoming Wang.
\newblock Boundary layer associated with a class of 3{D} nonlinear plane
  parallel channel flows.
\newblock {\em Indiana Univ. Math. J.}, 60(4):1113--1136, 2011.

\bibitem{MT08}
Anna Mazzucato and Michael Taylor.
\newblock Vanishing viscosity plane parallel channel flow and related singular
  perturbation problems.
\newblock {\em Anal. PDE}, 1(1):35--93, 2008.

\bibitem{Morton84}
B.R. Morton.
\newblock The generation and decay of vorticity.
\newblock {\em Geophysical and Astrophysical Fluid dynamics},
  {28}({3-4}):{277--308}, {1984}.

\bibitem{Oleinik66}
O.~A. Ole{\u\i}nik.
\newblock On the mathematical theory of boundary layer for an unsteady flow of
  incompressible fluid.
\newblock {\em J. Appl. Math. Mech.}, 30:951--974 (1967), 1966.

\bibitem{OleinikSamokhin99}
O.~A. Oleinik and V.~N. Samokhin.
\newblock {\em Mathematical models in boundary layer theory}, volume~15 of {\em
  Applied Mathematics and Mathematical Computation}.
\newblock Chapman \& Hall/CRC, Boca Raton, FL, 1999.

\bibitem{CS98II}
Marco Sammartino and Russel~E. Caflisch.
\newblock Zero viscosity limit for analytic solutions, of the {N}avier-{S}tokes
  equation on a half-space. {II}. {C}onstruction of the {N}avier-{S}tokes
  solution.
\newblock {\em Comm. Math. Phys.}, 192(2):433--461, 1998.

\bibitem{Schlichting}
Herrmann Schlichting and Klaus Gersten.
\newblock {\em Boundary-layer theory}.
\newblock Springer-Verlag, Berlin, enlarged edition, 2000.
\newblock With contributions by Egon Krause and Herbert Oertel, Jr., Translated
  from the ninth German edition by Katherine Mayes.

\bibitem{Swann1971}
H.~S.~G. Swann.
\newblock The convergence with vanishing viscosity of nonstationary
  {N}avier-{S}tokes flow to ideal flow in {$R\sb{3}$}.
\newblock {\em Trans. Amer. Math. Soc.}, 157:373--397, 1971.

\bibitem{TWsuction}
R.~Temam and X.~Wang.
\newblock Boundary layers in channel flow with injection and suction.
\newblock {\em Appl. Math. Lett.}, 14(1):87--91, 2001.

\bibitem{TWnonchar}
R.~Temam and X.~Wang.
\newblock Boundary layers associated with incompressible {N}avier-{S}tokes
  equations: the noncharacteristic boundary case.
\newblock {\em J. Differential Equations}, 179(2):647--686, 2002.

\bibitem{TW1998}
Roger Temam and Xiaoming Wang.
\newblock On the behavior of the solutions of the {N}avier-{S}tokes equations
  at vanishing viscosity.
\newblock {\em Ann. Scuola Norm. Sup. Pisa Cl. Sci. (4)}, 25(3-4):807--828
  (1998), 1997.
\newblock Dedicated to Ennio De Giorgi.

\bibitem{VishikLyusternik1957}
M.~I. Vi{\v{s}}ik and L.~A. Ljusternik.
\newblock Regular degeneration and boundary layer for linear differential
  equations with small parameter.
\newblock {\em Amer. Math. Soc. Transl. (2)}, 20:239--364, 1962.

\bibitem{W2001}
Xiaoming Wang.
\newblock A {K}ato type theorem on zero viscosity limit of {N}avier-{S}tokes
  flows.
\newblock {\em Indiana Univ. Math. J.}, 50(Special Issue):223--241, 2001.
\newblock Dedicated to Professors Ciprian Foias and Roger Temam (Bloomington,
  IN, 2000).

\bibitem{XiaoXin2007}
Yuelong Xiao and Zhouping Xin.
\newblock On the vanishing viscosity limit for the 3{D} {N}avier-{S}tokes
  equations with a slip boundary condition.
\newblock {\em Comm. Pure Appl. Math.}, 60(7):1027--1055, 2007.

\bibitem{Y1963}
V.~I. Yudovich.
\newblock Non-stationary flows of an ideal incompressible fluid.
\newblock {\em \u Z. Vy\v cisl. Mat. i Mat. Fiz.}, 3:1032--1066 (Russian),
  1963.

\end{thebibliography}
\end{document}